\newtheorem{theorem}{Theorem}[section]
\newtheorem{definition}{Definition}
\newtheorem{remark}[theorem]{Remark}
\newtheorem{lemma}[theorem]{Lemma}
\newtheorem{proposition}[theorem]{Proposition}
\newtheorem{corollary}[theorem]{Corollary}
\newcommand{\R}{\mathbb{R}}
\newcommand{\N}{\mathbb{N}}
\newcommand{\Z}{\mathbb{Z}}
\newcommand{\abs}[1]{\left\vert #1\right\vert}
\newcommand{\per}{\mathrm{per}}
\newcommand{\toa}{\overset{2\alpha}{\rightharpoonup}}
\newcommand{\stoa}{\overset{2\alpha}{\rightarrow}}
\newcommand{\rightts}{\overset{2}{\rightharpoonup}}
\newcommand{\eps}{\varepsilon}
\newcommand{\Oea}{\Omega_{\eps,\alpha}}
\newcommand{\Oeaf}{\Omega_{\eps,\alpha}^{f}}
\newcommand{\Oeas}{\Omega_{\eps,\alpha}^{s}}
\newcommand{\Gea}{\Gamma_{\eps,\alpha}}
\newcommand{\Ke}{K_{\eps}}
\newcommand{\tKea}{\tilde{K}_{\eps,\alpha}}
\newcommand{\nablax}{\nabla_{\bar{x}}}
\newcommand{\ox}{\bar{x}}
\newcommand{\oy}{\bar{y}}
\newcommand{\xe}{\frac{x}{\eps}}
\newcommand{\oxe}{\frac{\ox}{\eps}}
\newcommand{\xne}{\frac{x_n}{\eps}}
\newcommand{\xnea}{\frac{x_n}{\eps^{\alpha}}}
\newcommand{\ou}{\bar{u}}
\newcommand{\cea}{c_{\varepsilon,\alpha}}
\newcommand{\tcea}{\tilde{c}_{\varepsilon,\alpha}}
\newcommand{\wea}{w_{\varepsilon,\alpha}}
\newcommand{\uea}{u_{\eps,\alpha}}
\newcommand{\Dea}{D_{\varepsilon}^{\alpha}}
\newcommand{\fea}{f_{\eps,\alpha}}
\newcommand{\feak}{f^k_{\eps,\alpha}}
\newcommand{\pea}{p_{\eps,\alpha}}
\newcommand{\vea}{v_{\varepsilon,\alpha}}
\newcommand{\hvea}{\tilde{v}_{\varepsilon,\alpha}}
\newcommand{\ceps}{c_{\varepsilon}}
\newcommand{\phiea}{\phi_{\eps,\alpha}}
\newcommand{\psiea}{\psi_{\eps,\alpha}}
\newcommand{\psieak}{\psi^k_{\eps,\alpha}}
\renewcommand{\Re}{R_{\eps}}
\newcommand{\Bea}{\mathcal{B}_{\eps,\alpha}}
\newcommand{\Sea}{S_{\eps,f}}
\newcommand{\spaceH}{\mathcal{H}}
\newcommand{\Oeah}{\Omega_{\varepsilon,\alpha,h}}
\newcommand{\bchi}{\bar{\chi}}
\newcommand{\norm}[1]{\left\Vert #1 \right\Vert}
\newcommand{\Lpnorm}[3]{\norm{#1}_{{\textstyle\mathstrut}\smash{L^{#2}(#3)}}}	%L^p - Norm
\newcommand{\Hknorm}[3]{\norm{#1}_{{\textstyle\mathstrut}\smash{H^{#2}(#3)}}}	%H^k - Norm
\newcommand{\dd}{\, \textup{d}}
\title{Homogenized limits of Stokes flow and advective transport in thin perforated domains}
\author[1]{Markus Gahn}
\author[2]{Vlad Revnic}
\affil[1]{University of Augsburg, Institute of Mathematics, Universitätsstraße 12a, 86159 Augsburg,
Germany}
\affil[2]{Heidelberg University, Institute for Mathematics, Im Neuenheimer Feld 205, 69120 Heidelberg,
Germany}
\date{}
\begin{document}

\maketitle

\begin{abstract}

We deal with the  rigorous homogenization and dimension reduction of flow and transport problems posed in thin $\eps$-periodic perforated layers with thickness of order $\eps^{\alpha}$ with $\alpha \in (0,1)$ and therefore the thickness of the layer is large compared its porosity. The aim is the derivation of effective models for $\eps\to 0 $, when the thickness of the layer tends to zero. For the flow problem we consider incompressible Stokes equations with a pressure boundary condition on the top/bottom of the layer, and the transport problem is given by reaction-diffusion-advection problem with advective flow governed from the fluid velocity from the Stokes model and different scalings for the diffusion coefficient modelling low and fast diffusion in the horizontal direction. In the limit, a Darcy-type law is obtained for the Stokes flow with Darcy-velocity depending only on the derivative of the Darcy-pressure in the vertical direction. The effective equation for the transport problem is again of diffusion-advection-type including homogenized coefficients, and with advective flow  given by the Darcy-velocity and only taking place in the vertical direction. In the case of slow diffusion in the vertical direction, effective diffusion only takes place in the vertical direction, where in the case of high diffusion in horizontal direction, we obtain effective diffusion in all space directions. To pass to the limit we use the method of two-scale convergence adapted to our microscopic geometry, which is based on uniform a priori estimates. Critical parts in the derivation of the macro-models are the control of the fluid pressure, for which we construct a Bogovskii-operator for thin perforated domains with arbitrary boundary conditions on the top/bottom, as well as the strong two-scale convergence for the microscopic solution of the transport equation, necessary to pass to the limit in the advective term. This strong convergence is established by using a Kolmogorov-Simon compactness argument.

\end{abstract}

\noindent\textbf{Keywords:} Homogenization, dimension-reduction, Stokes-equation, two-scale convergence, reaction-diffusion-advection equation

\section{Introduction}

The study of fluid flow and the transport of chemical substances or heat through thin, heterogeneous layers is crucial for numerous applications, ranging from medicine to geosciences and materials science. The different scalings in the microscopic geometry, as the thickness and the porosity of the layer leads to high computational challenges. To overcome this problem effective models for $\eps\to 0$ are derived, carrying information about the processes on the microscopic scale in homogenized coefficients.
The present work deals with the rigorous homogenization and dimension reduction of transport and flow problems posed in thin $\eps$-periodic perforated layers with thickness of order $\eps^{\alpha}$ with $\alpha \in (0,1)$. Here, $0<\eps\ll1$ is a small parameter which describes the ratio between the macroscopic size (the diameter) of the thin layer and its heterogeneity. Since $\alpha <1$, we are dealing with layers whose thickness is much greater  than their internal heterogeneity, and therefore we have a periodic structure in all space directions. However, for $\eps \to 0$ the thickness of the layer tends to zero, and therefore we are dealing with a simultaneous homogenization and dimension reduction problem. The fluid flow is described by the (quasi-) stationary incompressible Stokes equations, and the transport is given by a reaction-diffusion-advection equation, with advection given by the velocity field of the Stokes problem, and different scalings of the diffusion coefficient with respect to $\eps$ and $\alpha$. Using two-scale compactness methods, we derive for $\eps \to 0 $ limit problems on the macroscopic scale. While for the transport equation again a reaction-diffusion advection is obtained, for the Stokes problem we obtain a Darcy-type equation. 

To pass to the limit $\eps \to 0$ we make use of the two-scale convergence adapted to thin layers with thickness of order $\eps^{\alpha}$. This method captures both, the homogenization in the horizontal direction, and the dimension reduction in the vertical direction. This type of two-scale convergence was introduced in \cite{buzanic2025poroelasticplatemodelobtained}, and is an extension of the two-scale convergence from the seminal works \cite{Allaire_TwoScaleKonvergenz, Nguetseng} in domains, see also \cite{MarusicMarusicPalokaTwoScaleConvergenceThinDomains} for a first definition of two-scale convergence in thin homogeneous domains and \cite{NeussJaeger_EffectiveTransmission} for thin heterogeneous layers. Based on uniform a priori estimates for the microscopic solutions with respect to $\eps$ and $\alpha$, we obtain two-scale compactness results for these solutions. More precisely, for the fluid velocity (and pressure) and the solution of the transport equation, we get different scalings of the gradient with respect to $\eps$ and $\alpha$, leading to a different structure of the limit functions. Hence, in a first step we show general two-scale compactness results for different types of scalings of the gradient, which generalizes the results from \cite{Allaire_TwoScaleKonvergenz} to the thin layer. Although the thickness of the layer goes to zero for $\eps \to 0$, and therefore the thin layer reduces to a lower dimensional manifold, the macroscopic variable of the two-scale limit is depending on $n$ variables. In other words, the limit function is defined on a thick layer of order $1$.
This is a crucial difference compared to the case $\alpha = 1$, when the thin layer only consists of one micro-cell in the vertical direction (no periodicity in the vertical direction).

The fluid flow is described by the incompressible Stokes equations. On the top/bottom of the thin layer we impose a pressure boundary condition, and on the lateral part of the layer and the perforations inside the layer a no-slip condition is assumed. In a first step, we derive uniform a priori estimates for the fluid velocity and the fluid pressure. Here, the crucial part is the bound for the pressure. For this, we construct a Bogovskii-operator (for vector fields having arbitrary boundary values on the top/bottom of the thin layer) with suitable scalings of its operator norm with respect to $\eps $ and $\alpha$ adapted to the microscopic geometry. In a first step, we solve the divergence equation in a thin homogeneous layer with thickness of order $\eps^{\alpha}$. Now, applying the restriction operator from \cite{AllaireHomStokes} for $\eps$-periodic domains, we obtain the Bogovskii-operator for the perforated thin layer. Based on these a priori estimates and the general two-scale convergence results, we get compactness of the microscopic  Stokes-solutions. As usual, the two-scale limit of the velocity is depending on the macroscopic and the microscopic variable, while the limit pressure, the so-called Darcy-pressure, only depends on the macroscopic variable. However, in contrast to the classical case in perforated domains, see \cite{AllaireHomStokes,SanchezPalencia1980}, we only obtain $H^1$-regularity  in the $x_n$-component of the Darcy-pressure. Hence, the resulting Darcy-equation does not depend on the whole gradient of the pressure, but only on the derivative with respect to the $x_n$-component. This is also a significant difference to the case $\alpha = 1$, see for example \cite{anguiano2018transition,fabricius2023homogenization}, where the macroscopic variable for the  Darcy-pressure (and also the fluid limit) is given on a lower-dimensional manifold, and also the full gradient (with respect to the horizontal direction) contributes to the Darcy-velocity.

There is extensive literature on the homogenization of Stokes flow in perforated domains. Here, we have to mention the seminal work of Tartar in \cite{SanchezPalencia1980}, and also   \cite{AllaireHomStokes}, where a restriction operator for connected perforations is constructed. However, the homogenization and simultaneous dimension reduction of the Stokes equations posed on a thin, periodically perforated  layer has received less attention, except for  heterogeneities  of the thin layer with a specific structure, for example with a rough surface given as a graph (see \cite{bayada1989homogenization,chechkin1999boundary}) or the perforations have cylindrical shape (see \cite{anguiano2018transition}, and also \cite{fabricius2016darcy} for a formal treatment). In \cite{fabricius2023homogenization}, the case $\alpha = 1$ (only one layer of micro-cells in the vertical direction) with Navier slip boundary conditions on the perforations and the top/bottom of the thin layer is considered. Both the specific choice of $\alpha$ and the slip condition (in particular on the top/bottom) lead to qualitatively distinct effective equations compared to our problem. More precisely, the effective model only takes place on a $(n-1)$ dimensional manifold and contains an additional force term for the Darcy-velocity, due to the Navier-slip boundary condition on the perforations. Furthermore, full $H^1$-control of the macroscopic pressure is obtained.  \cite{anguiano2018transition} deals with the homogenization and dimension reduction of the Navier–Stokes equations with no-slip boundary conditions in a periodically perforated thin domain, where the periodicity scale differs from the thickness scale. Compared to the present work, for the homogenization and dimension reduction the unfolding method is used, which gives an equivalent characterization of the two-scale convergence. Furthermore, their analysis is limited to the case of cylindrical solids, without oscillations in the vertical direction. We emphasize, that this has a significant influence on the limit model. Further, in our case of a pressure boundary condition on the top/bottom of the thin layer, the a priori estimate for the pressure is of order $\eps^{\frac{3\alpha}{2}}$ instead of order $\eps^{\frac{\alpha}{2}}$ in the no-slip case. As a special case of our results for arbitrary perforations, we also consider the case of cylindrical inclusions, which leads to a Darcy-flow depending only in the vertical direction of the Darcy-pressure (and the vertical forces), where the horizontal flow is only depending on the horizontal forces multiplied with the permeability tensor.
In our paper, we use the  two-scale convergence defined in \cite{buzanic2025poroelasticplatemodelobtained}, where this method was used for the homogenization and dimension reduction of a linearized fluid-structure interaction problem coupling instationary Stokes-flow with linear elasticity for different scalings. As in our case, the thickness of the layer tends to zero for $\eps \to 0$, but the periodic oscillations within the layer are much smaller than the thickness. In the limit, a Biot-law is obtained, where the generalized Darcy-velocity is also only depending on the $n$-th derivative of the Darcy-pressure. The crucial difference is in the proof of the a priori estimates for the microscopic pressure. As usual in the derivation of the Biot-law, continuity condition between the fluid flow and the time-derivative of the displacement at the fluid-structure-interface allows the control of the pressure, while in our case we have to construct a restriction operator adapted to the microscopic geometry. We also refer to \cite{gahn2025effective} for the derivation of a Biot-plate equation in the case $\alpha = 1$.

The last part of our paper deals with the homogenization and dimension reduction of a  transport problem, modelling, for instance, the evolution of a chemical species concentration (as well as heat transfer), given by a reaction-diffusion-advection equation with advection governed by the Stokes velocity and different scaling for the diffusion coefficient depending on both $\eps$ and $\alpha$. We cover the cases of fast and slow diffusion in the horizontal direction.
On the top and bottom of the layer, we consider Dirichlet-boundary conditions and on the lateral part of the layer and on the perforations, we consider homogeneous Neumann-boundary conditions. As for the fluid flow, the first step involves deriving $\eps$-uniform  a priori estimates for the concentration. Naturally, these depend on the scaling of the diffusion coefficient. 
In order to deal with the advection term, strong two-scale convergence of the microscopic concentration is required, for which we need control of the time-derivative.  For this, additional $L^\infty$-estimates are needed. In the case of slow diffusion, standard energy bounds for the time-derivative and Sobolev norms (depending on the scaling for the diffusion) are insufficient to guarantee strong two-scale convergence of the concentration, and further control of the spatial variable is needed. This is achieved by estimating the differences of shifts of the microscopic concentration, which finally allows an application of Kolmogorov-Simon type compactness results. The different diffusion coefficients lead to two distinct limit models, as $\eps \to 0$. In the case of fast diffusion, the homogenized model exhibits effective diffusion in both the horizontal and vertical direction while advection only takes place in the vertical $x_n$-direction. It is worth emphasizing that, even though the layer reduces to a lower dimensional manifold, we still get an effect in the vertical direction. Conversely, in the case of slow diffusion, the weaker estimates only ensure diffusive and convective flow in the vertical direction.

The homogenization of reaction-diffusion-advection equations for slow and fast diffusion is nowadays well understood and we  refer to the seminal works \cite{hornung1991diffusion} and \cite{hornung1994reactive}. The latter deals in particular also with the case of slow diffusion with a specific nonlinear reaction term for the scalar case. More general nonlinearities and systems are considered in \cite{mielke2013two} and \cite{eck2005homogenization}, see also \cite{gahn2021homogenization}, where a general two-scale compactness result of Kolmogorov-Simon type is shown for problems with low diffusion.
Rigorous results for the derivation of effective models via simultaneous dimension reduction and homogenization for reaction-diffusion-problems including nonlinearities for the case $\alpha = 1$ can be found in \cite{GahnNeussRaduSingularLimit2021,GahnEffectiveTransmissionContinuous,GahnNeussRaduKnabner2018a,NeussJaeger_EffectiveTransmission} for different scalings of the diffusion coefficient (for dimension reduction problems including nonlinearities see for example \cite{chipot2011some,list2020rigorous} for different scaling of the diffusion coefficient). Perforated thin domains were considered in \cite{gahn2025derivation}. A reaction-diffusion-advection equation modelling heat flow with advective term given by the solution of a Stokes equation, was recently treated in \cite{freudenberg2024analysis} for a thin layer with rough surface given  as a graph. For $\alpha \in (0,1)$ rigorous results seem to be missing and our paper is a first essential step and we treat two critical scalings. Principal ideas to establish the strong two-scale convergence of the concentration in our transport problem are similar to those used in the aforementioned papers, in particular regarding two-scale compactness results (for thin domains) of Kolmogorov-Simon-type, which we generalized to our geometrical setting.

The paper is structured as follows.
In Section \ref{sec:MainResults}, we introduce the microscopic formulations of both the Stokes and transport problems, formulate the macroscopic models and outline the key steps in its derivation, and present the main results of our analysis. We also provide a detailed description of the underlying microscopic geometry. Section \ref{sec:two_scale_compactness} gives an introduction of the two-scale convergence adapted to thin, heterogeneses layers with thickness of order $\eps^{\alpha}$.  We further establish compactness results for $H^1-$functions, depending on different scaling of the gradient. The macroscopic models for the fluid and transport problem are derived in Section \ref{sec:fluidproblem} and \ref{sec:transport_problem} respectively. For both, we proceed in the following way: First, we establish $\eps$-uniform a priori estimates. Then, we show two-scale compactness results, and finally, we derive the macroscopic models.

\subsection{Notations}

Let $n\in \N$, then for $\Omega\subset \R^n$ a bounded Lipschitz domain, we denote by $L^p(\Omega)$ and $W^{1,p}(\Omega) $ the standard Lebesgue and Sobolev spaces with $p \in [1,\infty]$. In particular, for $p=2$, we write $H^1(\Omega)^d\coloneqq W^{1,2}(\Omega)^d$. With $S$ a subset of $\partial \Omega$, we let $H^1(\Omega,S)$ denote the $H^1(\Omega)$ functions vanishing on $S$ in the sense of traces. For norms defined on vector valued functions spaces $X^d$ with $d\in \N$, we omit the upper index and write $\|\cdot\|_X$ instead of $\|\cdot\|_{X^d}$. For a  Banach space $X$ and $p \in [1,\infty]$, we denote the usual Bochner spaces by $L^p(\Omega,X)$ and, in particular,$L^p((0, T), X)$ when time is involved. 
For the dual space of $X$, we use the notation $X'$.  The duality pairing between $X'$ and $X$ is denoted by  $\langle \cdot , \cdot \rangle_X$.

We consider the following periodic function spaces. Let $Y\coloneqq (0,1)^n$, then $C^{\infty}_{\per}(Y)$ is the space of smooth functions on $\R^n$, which are $Y$-periodic, and $H^1_{\per}(Y)$ is the closure of $C^{\infty}_{\per}(Y)$ with respect to the norm on $H^1(Y)$.  Further, for a subset $Y^{\ast}\subset Y$ with $\partial Y \subset \partial Y^{\ast}$, we denote by $H^1_{\per}(Y^{\ast})$ the space of  functions from  $H_{\per}^1(Y)$ restricted to $Y^{\ast}$. For $y \in Y$, we use the notation $\oy:= (y_1,\ldots,y_{n-1})$.

For a function $f\in H^1(\Sigma \times (a,b))$ with $a< b$ and $\Sigma \subset \R^{n-1}$ we write $\nabla_{\ox}f (x):= (\partial_1 f(x),\ldots,\partial_{n-1} f(x))$ (with $\ox:= (x_1,\ldots,x_{n-1})$ for $x \in \Omega$)  and also identify this vector in a natural way with a vector in $\R^n$ by $\nabla_{\ox} f(x) := (\nabla_{\ox} f(x),0)$. If $\Sigma $ is a rectangular domain with integer side length, we denote by $H_{\#}^1(\Sigma \times (a,b))$ the space of $\Sigma$-periodic functions in $\ox$-direction, and similar by $C_{\#}^{\infty}(\overline{\Omega})$ the space of smooth and $\Sigma$-periodic functions.
Finally, we define the  Frobenius product $B:C\coloneqq \mathrm{tr}(B^{\top} C) =  \sum_{i,j=1}^n B_{ij}C_{ij}$ for $B,C \in \R^{n\times n}$.

\section{The microscopic models and main results}\label{sec:MainResults}

In this section we briefly introduce the microscopic problems for the fluid flow and the transport problem, explain the essential steps used for the derivation of the macroscopic models and formulate the main results of this paper. The aim of this paper is twofold: First, we study a Stokes problem subject to no-slip boundary conditions on the oscillating boundary and pressure boundary conditions on the upper and lower surfaces of the thin layer. We then perform a rigorous homogenization and dimension reduction for this setting. Here, we only deal with the stationary problem. In the next step, we treat a reaction-diffusion-advection problem, where the advective velocity is given as the solution of the Stokes problem considered before (here we assume that the Stokes problem is quasi-stationary, which does not influence the previous results). For this, we assume different scalings for the diffusion coefficient, leading to a different macroscopic behavior. To pass to the limit $\eps \to 0$, we use the method of two-scale convergence adapted to thin layers of order $\eps^{\alpha}$. The different scalings lead to different bounds for (the gradient of)  the concentration and we  prove general two-scale compactness results for Sobolev functions to deal with these different cases.
\\

\subsection{The fluid problem}
Let us start with the formulation of the microscopic Stokes problem: We are looking for a fluid velocity $\uea \colon \Oeaf \to \R^n$ and a fluid pressure
$\pea \colon \Oeaf \to \R$ such that 
\begin{equation}\label{eq:Stokes_micro_strong}
   \begin{array}{rll}
      -\Delta \uea + \nabla \pea &= \fea \quad &\text{in} \;\Oeaf,  \\
      \nabla \cdot \uea &= 0 \quad &\text{in} \; \Oeaf, \\
      \uea &= 0 \quad &\text{on} \; \partial_D\Oeaf \cup \Gea, \\
      (-\nabla \uea + \pea \text{Id})\nu &= \pea^b\nu \quad &\text{on} \; \Sea^{\pm}.
    \end{array} 
\end{equation} 
Here, $f_{\eps,\alpha}$ is a volume force and $\pea^b$ a pressure boundary condition. Further, $\Oeaf$ is the  microscopic fluid domain, given as a periodic domain with perforations and with thickness of order $\eps^{\alpha}$ (we refer to Section \ref{sec:micro_domain} for more details). $\Sea^{\pm}$ describes the top/bottom of the domain and $\Gea$ the surface of the perforations. 
We emphasize that, to keep the problem a little bit simpler,  we use here the stress $(-\nabla \uea + \pea I) $ instead of $-e(\uea) + \pea I$ with the symmetric gradient $e(\uea)$, which is not strict from a physical point of view, due to the pressure boundary condition on $\Sea^{\pm}$. However, there seems to be no problem dealing with the more general case, by using the Korn inequality instead of the Poincar\'e inequality.
In a first step, we show $\eps$-uniform a priori estimates (depending additionally on the parameter $\alpha$) for the fluid velocity $\uea$ and the fluid pressure $\pea$, where as usual the critical point is the derivation of the estimate for the pressure. For this, we first show a Bogovskii-result for the whole layer $\Oea$ (without perforation), and then apply the restriction operator to obtain a Bogovskii result in the perforated layer $\Oeaf$, which allows to control the pressure. More precisely, we get
\begin{align*}
\eps^{-2} \Lpnorm{\uea}{2}{\Oeaf} + \eps^{-1} \Lpnorm{\nabla \uea}{2}{\Oeaf} + \eps^{-\alpha} \|\pea \|_{L^2(\Oeaf)} \leq  C\eps^{\alpha/2}. 
\end{align*}
Based on this estimate and general two-scale compactness results, we obtain limit functions $u_0 \in  L^2(\Omega,H_{\per}^1(Y))^n$ with $u_0 = 0$ in $Y\setminus Y_f$ and $\nabla_y \cdot u_0=0$ in $\Omega \times Y    $ ,  and $p_0 \in L^2(\Omega)$ (with $\Omega = \Sigma \times (-1,1)$ the thick layer), such that up to a subsequence (we refer to Section \ref{sec:two_scale_compactness} for the definition of the two-scale convergence)
\begin{align*}
    \eps^{-2} \uea \toa u_0, \qquad \eps^{-1} \nabla \uea \toa \nabla_y u_0 , \qquad \eps^{-\alpha} \pea \toa p_0.
\end{align*}
We emphasize that the limit fluid velocity $u_0$ is depending on both, the macroscopic variable $x\in \Omega$ and the microscopic variable $y \in Y$. In the next step, we derive a two-scale homogenized Stokes problem (see equation $\eqref{MacroscopicLimit}$), which includes all the necessary information of the limit problem. From this, we obtain that $u_0$ can be expressed as 
\begin{align}\label{eq:u0_decomosition}
     u_0(x,y) &= \sum_{i=1}^{n-1} f_0^iw_i + (f_0^n - \partial_{x_n}p_0)w_n,
\end{align}
where $(w_i,q_i)$ for $i=1,\dots,n$ are the solutions of the cell problem $\eqref{cellproblem}$. Compared to homogenization results for Stokes flow in porous media, here only the $n$-th derivative of the (Darcy) pressure $p_0$ contributes to the macroscopic fluid velocity.
We define the average of $u_0$ as the Darcy-velocity 
    \begin{equation*}
        \ou(x) \coloneqq \int_{Y_f} u_0(x,y) \dd y.
    \end{equation*}
It follows from the divergence-free condition of $\uea$ that the $n$-th component of the Darcy-velocity is constant in the $x_n$-direction, that is,  $\partial_n \bar{u}^n = 0$.  Hence,  with the permeability tensor $K$ defined in $\eqref{permabilitytensor}$, we obtain 
\begin{align*}
\begin{aligned}
        \ou &= K(f_0 - e_n \partial_{x_n} p_0)  &\text{ in }& \Omega,
        \\
        \partial_n \ou &= 0  &\text{ in }& \Omega.
\end{aligned}
\end{align*}
As usual for the homogenization of Stokes problems, we will see that the pressure boundary condition on $S_{\eps,\alpha}^{\pm}$ leads to the Dirichlet boundary condition $p_0 = p_0^b$ for the limit pressure. In total, we get the Darcy-equation
\begin{align}
\begin{aligned}\label{eq:Darcy_equation}
    \partial_{x_n} \left[ K(f_0- e_n\partial_{x_n} p_0) \right]_n &= 0  &\text{ in }& \Omega,\\
      p_0 &= p_0^b  &\text{ on }&  S^{\pm}_1.
\end{aligned}
\end{align}
In summary, we obtain the following result:
\begin{theorem}\label{thm:main_result_Stokes_flow}
Let $(\uea,\pea)$ be the weak solution of the microscopic problem $\eqref{eq:Stokes_micro_strong}$. Then, there exists $u_0 \in  L^2(\Omega,H_{\per}^1(Y))^n$ with $u_0 = 0$ in $Y\setminus Y_f$ and $\nabla_y \cdot u_0=0$ in $\Omega\times Y$,  and $p_0 \in L^2(\Omega)$, such that $\eps^{-2} \uea \toa u_0$ and $\eps^{-\alpha} \pea \toa p_0$. In addition, the Darcy-pressure $p_0$ is the unique weak solution of the Darcy-equation $\eqref{eq:Darcy_equation}$ and $u_0$ is given by formula $\eqref{eq:u0_decomosition}$.
\end{theorem}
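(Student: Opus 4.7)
The plan is to proceed in four steps following the standard scheme for homogenization of Stokes-type problems, but adapted to the thin perforated geometry with thickness $\eps^{\alpha}$.

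First, I would establish the announced a priori estimates. The velocity bounds $\eps^{-2}\Lpnorm{\uea}{2}{\Oeaf} + \eps^{-1}\Lpnorm{\nabla \uea}{2}{\Oeaf} \leq C\eps^{\alpha/2}$ follow by testing the weak formulation with $\uea$ and using a Poincaré inequality on $\Oeaf$ with the correct $\eps$-scaling inherited from the thin perforated structure. The pressure estimate $\|\pea\|_{L^2(\Oeaf)} \leq C\eps^{\alpha/2}\eps^{\alpha}$ is the delicate point and requires a Bogovskii-type right-inverse of the divergence with a prescribed operator norm. I would construct it in two stages: solve the divergence problem on the thin homogeneous slab $\Oea$ with the sharp $\eps^{\alpha}$-scaling by rescaling to a unit slab and invoking the classical Bogovskii result, then compose with a restriction operator of Allaire--Tartar type adapted to the $\eps$-periodic perforations. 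Applying this right-inverse to $\pea$ minus its mean and testing the momentum equation yields the pressure bound.

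Second, using the scalings $\eps^{-2}\uea$, $\eps^{-1}\nabla\uea$ and $\eps^{-\alpha}\pea$, the general two-scale compactness results of Section \ref{sec:two_scale_compactness} produce, along a subsequence, limits $u_0 \in L^2(\Omega,H^1_{\per}(Y))^n$ and $p_0 \in L^2(\Omega\times Y)$. The no-slip condition on $\Gea$ propagates to $u_0 = 0$ in $Y\setminus Y_f$ through the two-scale characterization; the microscopic incompressibility yields $\nabla_y\cdot u_0 = 0$ in $\Omega\times Y$. To see that $p_0$ is independent of $y$, I would test the momentum equation against $\eps\,\phi(x)\psi(x,x/\eps)$ with $\psi$ $Y$-periodic and divergence-free in $y$ vanishing on $Y\setminus Y_f$, and note that the two-scale limit forces $\nabla_y p_0 = 0$ in a distributional sense.

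Third, to identify $u_0$ I would pass to the two-scale limit in the weak formulation with oscillating test functions of the form $\phi(x,x/\eps)+\eps\,\psi(x,x/\eps)$, obtaining the two-scale homogenized system \eqref{MacroscopicLimit}. By linearity in the effective forcing and the cell problems \eqref{cellproblem}, the decomposition \eqref{eq:u0_decomosition} follows. The main obstacle is to show that only the $x_n$-derivative of $p_0$ enters and that this derivative actually belongs to $L^2(\Omega)$. I would handle this by choosing test functions that explore vertical variations at the macroscopic scale and exploit the presence of a full period in the $y_n$-variable inside the thick layer $\Omega$: this is precisely the feature that distinguishes $\alpha\in(0,1)$ from the case $\alpha=1$ and produces genuine vertical regularity for $p_0$, while its horizontal derivatives need not exist.

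Fourth, averaging \eqref{eq:u0_decomosition} over $Y_f$ gives the Darcy law $\ou = K(f_0 - e_n\partial_{x_n}p_0)$ with permeability tensor $K$ built from the cell solutions as in \eqref{permabilitytensor}. Passing to the two-scale limit in $\nabla\cdot\uea=0$ and integrating in $y\in Y_f$ yields $\partial_n\ou^n = 0$, and substituting into the $n$-th component of the Darcy law produces the effective equation $\partial_{x_n}[K(f_0 - e_n\partial_{x_n}p_0)]_n = 0$. The Dirichlet condition $p_0 = p_0^b$ on $S^\pm_1$ comes from passing to the limit in the boundary integral generated by the pressure boundary condition on $\Sea^{\pm}$, using test functions which do not vanish there; the scaling $\eps^{-\alpha}$ precisely compensates the measure of $\Sea^{\pm}$ so that the boundary term survives. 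Uniqueness of $p_0$ solving the one-dimensional-in-$x_n$ elliptic equation \eqref{eq:Darcy_equation} with the Dirichlet data $p_0^b$ is a standard energy argument (using positivity of $K_{nn}$), and this uniqueness promotes subsequential convergence to convergence of the full sequence.
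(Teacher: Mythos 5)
Your overall architecture matches the paper's: energy estimate for $\uea$ via a scaled Poincar\'e inequality, pressure bound via a Bogovskii operator built by solving the divergence problem on the homogeneous slab and composing with the Allaire restriction operator, two-scale compactness, cell problems, Darcy law, uniqueness. However, there are two concrete errors that would derail the argument as written.

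First, your claim that $p_0$ is independent of $y$ is established by testing with $\eps\,\phi(x)\psi(x,x/\eps)$ where $\psi$ is \emph{divergence-free in $y$} cannot work: for such test functions the dominant part of $\int_{\Oeaf}\pea\,\nabla\cdot\phiea\dd x$ is $\int\pea\,(\nabla_y\cdot\psi)(x,x/\eps)\dd x$, and this vanishes identically when $\nabla_y\cdot\psi=0$, so the pressure disappears from the limit and no information about $\nabla_y p_0$ can be extracted. The paper does the opposite: to show $y$-independence of $p_0$ it tests with $\phi\in C^\infty_0(\Omega,C^\infty_\per(Y_f))^n$ with \emph{arbitrary} $y$-divergence and with the scaling $\beta=\gamma=1-2\alpha$ (so that the pressure term is the only surviving term), which yields $\int_\Omega\int_Y (p_0-p_0^b)\,\nabla_y\cdot\phi\dd y\dd x=0$ for all such $\phi$ and hence $\nabla_y p_0=0$. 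Divergence-free test functions are used in the \emph{other} step — the derivation of the two-scale limit system \eqref{MacroscopicLimit} — precisely because one wants the pressure to drop out there. You have swapped the roles of the two classes of test functions.

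Second, your description of the pressure estimate — apply the Bogovskii right-inverse to ``$\pea$ minus its mean'' — signals a misunderstanding of the key structural point. The Bogovskii operator of Proposition \ref{Bogovskiiwholelayer}/Corollary \ref{cor:Bogovskii_perforated_layer} does \emph{not} require mean-zero data: because the constructed vector field is only required to vanish on $\partial_D\Oeaf\cup\Gea$ and is free on $\Sea^{\pm}$, the local divergence problems on the $\eps^\alpha$-cubes are solvable for any $L^2$ right-hand side, with the Poincar\'e constant $\eps^\alpha$ coming from the lateral Dirichlet conditions. This is exactly what produces the bound $\|\pea\|_{L^2(\Oeaf)}\le C\eps^{3\alpha/2}$ rather than the weaker $\eps^{\alpha/2}$ one obtains from the no-slip ($H^1_0$) Bogovskii operator, cf.\ Remark \ref{rem:pressure}. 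If you subtract the mean, you either need to estimate the mean separately (an unaddressed step), or you fall back to the no-slip construction and lose the improved scaling, which is essential for the later compactness and identification $p_0\in L^2(\Omega)$. A more minor point: the boundary condition $p_0=p_0^b$ on $S_1^{\pm}$ is not obtained by passing to the limit in a surviving boundary integral with an ``$\eps^{-\alpha}$ compensating the measure of $\Sea^{\pm}$'' (the top/bottom has $(n-1)$-measure of order $1$, independent of $\eps$); it arises from the duality/integration-by-parts argument with the test function $\phi=K_{nn}^{-1}\eta w_n$ where $\eta$ is allowed to be nonzero on $S_1^{\pm}$, and from having already absorbed the boundary pressure term $\int_{\Sea^\pm}\pea^b\nu\cdot\phiea$ into a volume integral.
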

We emphasize that in the limit $\eps\to 0$ the macroscopic quantities are given in the thick layer $\Omega = \Sigma \times (-1,1)$, although the thin layer reduces to a lower dimensional manifold $\Sigma$. Here, we have an essential different behavior compared to the case when the layer is of thickness $\eps$, where the limit functions only depend on the macroscopic variable $\ox \in \Sigma$.

\subsection{The transport problem}

In the next step, we consider the transport problem for a concentration given by a reaction-diffusion-advection equation with advection $\uea$, given as the solution of $\eqref{eq:Stokes_micro_strong}$ (now depending smoothly on time).
More precisely, we are looking for a $\cea : (0,T) \times \Oeaf \rightarrow \R$ which is the solution of
\begin{subequations}\label{eq:problem_transport_micro}
\begin{align}
\begin{aligned}
\frac{1}{\eps^{\alpha}}\partial_t \cea - \nabla \cdot (\Dea \nabla \cea - \frac{\uea}{\eps^2} \cea) &= \frac{1}{\eps^{\alpha}}g_{\eps,\alpha} &\mbox{ in }& (0,T)\times \Oeaf,
\\
-( \Dea\nabla \cea - \frac{\uea}{\eps^2} \cea) \cdot \nu &= 0 &\mbox{ on }& (0,T)\times \Gea,
\\
\cea &=  \ceps^b &\mbox{ on }& (0,T)\times (\Sea^+ \cup \Sea^-),
\\
\cea(0) &= 0 &\mbox{ in }& \Oeaf,
\end{aligned}
\end{align} 
with a source term $g_{\eps,\alpha}$ and a boundary concentration $\ceps^b$. The system is closed with suitable boundary conditions on $\partial_D \Oeaf $, which depends on the choice of the diffusion coefficient. More precisely, for the diffusion coefficient $\Dea$ we consider two different scalings with respect to $\eps$ and $\alpha$:
\begin{enumerate}[label = (D\arabic*)]
    \item\label{case:diffusion_low} $\Dea =  \eps^{\alpha}D I \in \R^{n \times n}$, 

    \item\label{case:diffusion_high} $\Dea = D \mathrm{diag}(\eps^{-\alpha},\ldots , \eps^{-\alpha},  \eps^{\alpha} ) \in \R^{n\times n}$.
\end{enumerate}
with a fixed constant $D>0$ and the unit matrix $I$ in $\R^{n\times n}$ (since it the first case the diffusion matrix $\Dea$ acts as a scalar, we will often just write $\Dea = \eps^{\alpha} D$).  On the lateral boundary we consider the following boundary condition
\begin{align}
\begin{aligned}
    -( \Dea\nabla \cea - \frac{\uea}{\eps^2} \cea) \cdot \nu &= 0 \quad\mbox{ on }(0,T)\times \partial_D \Oeaf, \, \,  \mbox{ if } \Dea = \eps^{\alpha} D,
    \\
    \cea \mbox{ is }& \Sigma\mbox{-periodic, if } \Dea = D \mathrm{diag}(\eps^{-\alpha},\ldots , \eps^{-\alpha},  \eps^{\alpha} ).
\end{aligned}
\end{align}
\end{subequations}
Hence, in the case \ref{case:diffusion_low} we consider homogeneous Neumann-boundary conditions, and in the case \ref{case:diffusion_high} periodic boundary conditions. Although we are in particular interested in the macroscopic behavior inside the domain, effects at the lateral boundary are also important for applications. We emphasize that the different choices are elemental for the derivation of the limit problem. While for $\Dea = \eps^{\alpha} D$ it would be no problem to consider periodic boundary conditions, our proof fails for Neumann-boundary conditions in the case \ref{case:diffusion_high}, see also Remark \ref{rem:estimate_shifts}.

From a physical point of view, the first case \ref{case:diffusion_low} treats slow diffusion in $\ox$-direction, where the second case \ref{case:diffusion_high} deals with fast diffusion in the horizontal direction. We will see that in the first case the diffusion in the macroscopic limit is only in the vertical direction and in the case of fast diffusion we get diffusion in all space directions.

We proceed in the same way as for the Stokes equation and first establish uniform a priori estimates with respect to $\eps$. More precisely, we get
\begin{align}\label{ineq:aux_main_results}
 \frac{1}{\eps^{\frac{\alpha}{2}}}\|\cea\|_{L^2((0,T)\times\Oeaf)} + \|\sqrt{\Dea} \nabla \cea\|_{L^2((0,T)\times\Oeaf)} \le C,
\end{align}
which immediately implies the existence of a limit function $c_0 \in L^2((0,T)\times \Omega)$, in particular independent of the microscopic variable $y$, such that up to a subsequence
\begin{align*}
    \cea \toa c_0.
\end{align*}
Further, we obtain a bound for the time-derivative on the dual space of the Sobolev space carrying the norm induced by the left-hand side of the previous inequality. For this, we need in particular an $L^{\infty}$-bound for the concentration $\cea$, to control the advective term. Using an Kolmogorov-Simon-type compactness argument, based on additional estimates for the differences of the shifts of the microscopic solutions, we can then establish also the strong two-scale convergence of the sequence $\cea$, necessary to pass to the limit $\eps \to 0$ in the advective term (since we only get the weak two-scale convergence of the fluid velocity $\uea$).

From inequality $\eqref{ineq:aux_main_results}$ we  see that the difference between the two cases lies in the scaling for the gradient $\nabla_{\ox} \cea$ with respect to the first $n-1$ components, leading to different regularity results (weak differentiability) of the limit function with respect to the spatial variable.

\noindent\textbf{The case $\Dea = D \mathrm{diag}(\eps^{-\alpha},\ldots , \eps^{-\alpha},  \eps^{\alpha} )$:}
This leads to 
\begin{align*}
     \|\nabla_{\ox} \cea\|_{L^2((0,T)\times\Oeaf)} + \eps \|\partial_n \cea\|_{L^2((0,T)\times\Oeaf)} \le C \eps^{\frac{\alpha}{2}},
\end{align*}
and we obtain  $c_0 \in H^1(\Omega)$ and additionally the existence of   corrector functions $\bar{c}_1 \in L^2((0,T)\times \Omega \times Y_f)$ with $\nabla_{\oy} \bar{c}_1 \in L^2((0,T)\times \Omega \times Y_f)^{n-1}$ and $(0,1)^{n-1}$-periodic with respect to $\oy$, and $c_1 \in L^2((0,T)\times \Omega, H_{\per}^1(0,1))$ (only depending on the $y_n$-variable), such that (up to a subsequence)
\begin{align*}
    (\nabla_{\ox} \cea , \eps^{\alpha} \partial_n \cea )\toa \nabla c_0 + (\nabla_{\oy} \bar{c}_1,\partial_{y_n} c_1).
\end{align*}

With these compactness results, we are able to pass to the limit in the weak variational equation associated to $\eqref{eq:problem_transport_micro}$. Here, we modify the standard homogenization approach based on the two-scale convergence to our thin structure. By choosing suitable test-functions, we first derive  cell problems for $\bar{c}_1$ and $c_1$, see  $\eqref{eq:cell_problem_c1_high_diffusion}$ and $\eqref{eq:cell_problem_c_1}$, which allow to express $\bar{c}_1$ and  $c_1$ in terms of $\nabla c_0$ and suitable cell solutions independent of macroscopic quantities. More precisely, we have 
\begin{align*}
\bar{c}_1 (t,x,y) = \sum_{i=1}^{n-1} \partial_{x_i} c_0(t,x) \bar{\chi}_i(y), \qquad c_1(t,x,y_n) = \partial_{x_n} c_0(t,x) \chi_n(y_n),
\end{align*}
where $\bar{\chi}_i$ for $i=1,\ldots,n-1$, and $\chi_n$ are the solutions of the cell problems   $\eqref{eq:cell_problem_diffusion}$ and $\eqref{eq:cell_problem_chi_n}$.
In the next step, we choose macroscopic test-functions, also capturing the dimension reduction to obtain with the expression of $\bar{c}_1$ and $c_1$ that $c_0$ is a unique solution of the macroscopic problem
\begin{align}
\begin{aligned}\label{eq:macro_model_transport_high_diffusion}
\partial_t c_0 - \nabla \cdot \left( D^{\ast} \nabla c_0 - c_0\ou e_n \right) &= \bar{g}_0 &\mbox{ in }& (0,T)\times \Omega,
\\
c_0 &= c_0^b &\mbox{ on }& (0,T)\times S^{\pm}_1,
\\
c_0(0) &= 0,
\\
c_0 \mbox{ is } \Sigma\mbox{-periodic}.
\end{aligned}
\end{align}
where $D^{\ast}$ is an effective diffusion coefficient, see $\eqref{def:effective_diffusion_coefficient}$, and $\ou$ is the Darcy-velocity obtained in Theorem \ref{thm:main_result_Stokes_flow}. Here, $\bar{g}_0$ is the average of the (two-scale) limit of $g_{\eps,\alpha}$.
First of all, we notice that macroscopically there is also an effect in the $x_n$-direction, although the layer reduces for $\eps \to 0$ to a lower dimensional manifold. The effective diffusion takes place in the horizontal and the vertical direction, where the advective flux only takes place in the vertical direction. Finally, let us summarize our results in the following main theorem:
\begin{theorem}\label{thm:main_result_transport_high_diffusion}
Let $\cea$ be the microscopic solution of $\eqref{eq:problem_transport_micro}$ and $\Dea = D \mathrm{diag}(\eps^{-\alpha},\ldots , \eps^{-\alpha},  \eps^{\alpha} )$. Then, there exists $c_0 \in L^2((0,T),H^1(\Omega))$ such that $\cea \stoa c_0$ and $c_0$ is the unique weak solution of the macroscopic problem $\eqref{eq:macro_model_transport_high_diffusion}$.
\end{theorem}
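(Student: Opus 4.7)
My plan is to derive the macroscopic equation in four stages following the roadmap sketched in the introduction.

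First, I would establish the $\eps$-uniform a priori bounds. Testing the weak formulation of $\eqref{eq:problem_transport_micro}$ against a suitable lift of the Dirichlet data $\ceps^{b}$ and applying a Gronwall argument yields the estimate $\eqref{ineq:aux_main_results}$, which in the anisotropic case \ref{case:diffusion_high} refines to
\begin{align*}
\tfrac{1}{\eps^{\alpha/2}}\Lpnorm{\cea}{2}{(0,T)\times\Oeaf} + \Lpnorm{\nabla_{\ox}\cea}{2}{(0,T)\times\Oeaf} + \eps\,\Lpnorm{\partial_n \cea}{2}{(0,T)\times\Oeaf} \leq C\eps^{\alpha/2}.
\end{align*}
Plugging these bounds into the general two-scale compactness results of Section \ref{sec:two_scale_compactness}, I obtain $c_0 \in L^2((0,T),H^1(\Omega))$ (which is $\Sigma$-periodic in $\ox$), $\bar c_1 \in L^2((0,T)\times\Omega, H^1_{\#}(Y_f))$ and $c_1 \in L^2((0,T)\times\Omega, H^1_{\per}(0,1))$ such that $\cea\toa c_0$ and $(\nabla_{\ox}\cea,\eps^{\alpha}\partial_n \cea)\toa \nabla c_0 + (\nabla_{\oy}\bar c_1,\partial_{y_n}c_1)$.

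Second, and this is the main technical obstacle, I need strong two-scale convergence $\cea \stoa c_0$ to pass to the limit in the advective product $\eps^{-2}\uea\,\cea$, since Theorem \ref{thm:main_result_Stokes_flow} only provides weak two-scale convergence of $\eps^{-2}\uea$. For this I would first derive an $L^{\infty}((0,T)\times\Oeaf)$-bound on $\cea$ by a Stampacchia-type truncation against $(\cea-K)_{+}$, which is essential to dominate the advective contribution when estimating the time-derivative $\eps^{-\alpha}\partial_t\cea$ by duality in a suitable anisotropic negative Sobolev norm. Crucially, I would then control the lateral shifts $\cea(\cdot+h\tau)-\cea$ for $\tau\in \R^{n-1}$; this step is where the periodic lateral boundary condition of case \ref{case:diffusion_high} is indispensable, as alluded to in Remark \ref{rem:estimate_shifts}, because under Neumann data the shift is not even well-defined on $\Oeaf$. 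A Kolmogorov-Simon compactness argument adapted to the thin perforated geometry (in the spirit of \cite{gahn2021homogenization,mielke2013two}) then upgrades the weak two-scale convergence of $\cea$ to strong two-scale convergence.

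Third, I identify the correctors. Testing the weak form with oscillating functions of the form $\eps^{\alpha}\phi(x)\psi(\oxe)$ with $\psi$ periodic and mean-zero in $\oy$, and $\eps\,\phi(x)\eta(x_n/\eps)$ respectively, and passing to the limit using the two-scale convergence of the gradient, isolates the cell problems $\eqref{eq:cell_problem_diffusion}$ for $\bchi_i$ and $\eqref{eq:cell_problem_chi_n}$ for $\chi_n$. By linearity, these yield the decomposition
\begin{align*}
\bar c_1(t,x,y) = \sum_{i=1}^{n-1}\partial_{x_i}c_0(t,x)\,\bchi_i(y), \qquad c_1(t,x,y_n) = \partial_{x_n}c_0(t,x)\,\chi_n(y_n).
\end{align*}

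Finally, I take macroscopic test functions of the form $\eps^{\alpha}\phi(t,x)$ with $\phi$ smooth, $\Sigma$-periodic, vanishing on $S_{1}^{\pm}$ and at $t=T$, where the prefactor $\eps^{\alpha}$ compensates the vanishing layer thickness. Passing to the limit in each term, the strong two-scale convergence of $\cea$ combined with the weak two-scale convergence of $\eps^{-2}\uea$ converts the advective term into $\int c_0\,\ou\cdot\nabla\phi$, and since $\ou$ only has a nontrivial $e_n$-component contribution (because $\partial_n\bar u^{\,n}=0$ by Theorem \ref{thm:main_result_Stokes_flow}), the flux reduces to $c_0\,\ou e_n$. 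The diffusion terms assemble, via the formulas for $\bchi_i$ and $\chi_n$, into $D^{\ast}\nabla c_0$ with $D^{\ast}$ as in $\eqref{def:effective_diffusion_coefficient}$, producing exactly $\eqref{eq:macro_model_transport_high_diffusion}$. The Dirichlet trace $c_0=c_0^b$ on $S_1^{\pm}$ is inherited from the microscopic traces by lifting $\ceps^b$. Uniqueness of the macroscopic problem follows from a standard energy estimate, using positive-definiteness of $D^{\ast}$ and the fact that $\ou e_n$ is divergence-free; by uniqueness the whole sequence converges, not just a subsequence.
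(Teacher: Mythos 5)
Your roadmap reproduces the paper's strategy faithfully: energy estimates, an $L^{\infty}$-bound by truncation, a dual-space estimate for $\eps^{-\alpha}\partial_t\cea$, spatial shift estimates exploiting the $\Sigma$-periodicity, Kolmogorov--Simon to upgrade to strong two-scale convergence, oscillating test functions to extract the cell problems for $\bar c_1$ and $c_1$, and macroscopic test functions to close the limit; uniqueness then promotes subsequential convergence to convergence of the whole sequence. All of the genuinely hard ideas the paper relies on (the $L^{\infty}$-bound to tame the advection when estimating $\partial_t\cea$; the need for the Kolmogorov--Simon argument because only weak two-scale convergence of $\eps^{-2}\uea$ is available; the fact that case~\ref{case:diffusion_high} with Neumann data breaks down) are correctly identified, so this is the same route.

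Where you go wrong is in the book-keeping of $\eps$-powers, and one of these slips would actually stop a rigorous version of your argument.
\begin{enumerate}[label=(\alph*)]
\item In your refined a priori estimate the left side mixes a bare $\|\cea\|$ (weighted by $\eps^{-\alpha/2}$) with unweighted $\|\nabla_{\ox}\cea\|$, while the vertical weight should be $\eps^{\alpha}$, not $\eps$; the consistent form is $\|\cea\|+\|\nabla_{\ox}\cea\|+\eps^{\alpha}\|\partial_n\cea\|\le C\eps^{\alpha/2}$, which is exactly what is needed to invoke Proposition~\ref{prop:compactness_v0_nabla_v0}. With $\eps$ in place of $\eps^{\alpha}$ you cannot apply that proposition.
\item The prefactors on the oscillating test functions used to extract the cell problems should be $\eps$ (for $\bar c_1$) and $\eps^{1-\alpha}$ (for $c_1$), not $\eps^{\alpha}$ and $\eps$; those are the scalings that make the $\nabla_{\oy}$-diffusion term precisely $O(1)$ while everything else (including the advection) is $o(1)$.
\item Most seriously: for the macroscopic equation you should test with $\psi(t,\ox,x_n/\eps^{\alpha})$ with no prefactor. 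The $\eps^{-\alpha}$ compensation for the vanishing layer thickness is already built into the definition of $\toa$ (and into the $\eps^{-\alpha}$ on $\partial_t\cea$ and $g_{\eps,\alpha}$ in the weak form). Inserting an extra $\eps^{\alpha}$ makes every term in the weak form $O(\eps^{\alpha})$, so passing to the limit yields $0=0$; you would need to divide by $\eps^{\alpha}$ again, which is just removing the prefactor.
\item A small point: with Neumann data the spatial shift is perfectly well-defined after extension; the actual obstruction, as the paper notes in Remark~\ref{rem:estimate_shifts}, is the commutator term $\eps^{-\alpha}\int\nabla_{\ox}\delta\cea\cdot\nabla\eta\,\eta\,\delta\cea$ produced by the cut-off near $\partial_D\Oeaf$, which the $\eps^{-\alpha}$ horizontal diffusion does not allow you to absorb. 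Periodic boundary conditions let you drop the cut-off entirely.
\end{enumerate}
None of these require a new idea to repair, but (c) as written is a genuine obstacle because your final step would degenerate to a trivial identity.
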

The proof of the compactness result, together with some additional convergence of the gradient $\nabla \cea$ can be found in Section \ref{sec:compactness_transport}, and the derivation of the macroscopic model is done in Section \ref{sec:derivation_macro_model_transport}, where we also give the definition of a weak solution of the problem $\eqref{eq:macro_model_transport_high_diffusion}$.
\\

\noindent\textbf{The case $\Dea = \eps^{\alpha} D I$:} In the case of low diffusion in horizontal direction, we obtain for the gradient $\nabla_{\ox}{\cea}$ a scaling of the form
\begin{align*}
    \eps^{\frac{\alpha}{2}} \|\nabla_{\ox} \cea \|_{L^2(\Oeaf)} \le C.
\end{align*}
In this case, we obtain no spatial regularity (differentiability) of $c_0$ with respect to $\ox$. While we obtain again the weak two-scale convergence of $\cea$ to a limit function $c_0\in L^2((0,T)\times \Omega)$, we only obtain $\partial_n c_0 \in L^2((0,T)\times \Omega)$.  Further, for the gradient we obtain the convergence
\begin{align*}
\eps^{\alpha} \nabla \cea \toa \partial_n c_0 e_n + \nabla_y c_1.
\end{align*}
Now, compared to the previous case, the scaled gradient $\nabla \cea$   does not convergence to the sum of the full gradient of $c_0$, but only the $n$-th component, and the rest is included in the gradient (with respect to $y$) of the corrector $c_1$. However, we can proceed in the same way as in the previous case, but this time we get the expression 
\begin{align*}
c_1(t,x,y) = \partial_{x_n} c_0(t,x) \chi_n(y),
\end{align*}
again, using the cell solution $\chi_n$ of  $\eqref{eq:cell_problem_diffusion}$.
Finally, the macroscopic model reads as follows:
\begin{align}
\begin{aligned}\label{eq:macro_model_transport_low_diffusion}
   \partial_t c_0 - \partial_n (D^{\ast}_{nn}\partial_n c_0 - c_0 \ou^n ) &= \bar{g}_0 &\mbox{ in }& (0,T)\times \Omega,
\\
c_0 &= c_0^b &\mbox{ on }& (0,T)\times S_1^{\pm},
\\
c_0(0) &= 0.
\end{aligned}
\end{align}
In this case, we only have diffusive and convective flow in the vertical direction. To pass to the limit in the advective term, we need again the strong (two-scale) convergence of the concentration. 
We summarize the main result in the following theorem:
\begin{theorem}\label{thm:main_result_transport_low_diffusion}
Let $\cea$ be the microscopic solution of $\eqref{eq:problem_transport_micro}$ and $\Dea = \eps^{\alpha} D$. Then, there exists $c_0 \in L^2((0,T) \times \Omega)$ with $\partial_n c_0 \in L^2((0,T)\times \Omega)$, such that $\cea \stoa c_0$ and $c_0$ is the unique weak solution of the macroscopic problem $\eqref{eq:macro_model_transport_low_diffusion}$.
\end{theorem}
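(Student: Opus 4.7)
The plan is to follow the standard four-step homogenization scheme adapted to the thin-layer setting: extract weak two-scale limits from the a priori estimates, upgrade the convergence of $\cea$ to strong two-scale via a Kolmogorov-Simon compactness argument (needed to handle the advective term), derive a cell problem for the corrector $c_1$, and finally pass to the limit in the weak formulation. The key difference with the high-diffusion case is that here $\eqref{ineq:aux_main_results}$ only gives $\eps^{\alpha/2}\|\nabla \cea\|_{L^2((0,T)\times \Oeaf)} \le C$, so $c_0$ has no weak differentiability in the horizontal directions and only a one-dimensional cell problem in $y_n$ survives.

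First, combining $\eqref{ineq:aux_main_results}$ with the general compactness results of Section \ref{sec:two_scale_compactness}, I extract a subsequence along which $\cea \toa c_0 \in L^2((0,T)\times \Omega)$ with $\partial_{x_n} c_0 \in L^2((0,T)\times \Omega)$, together with $\eps^{\alpha}\nabla \cea \toa \partial_{x_n}c_0\, e_n + \nabla_y c_1$ for some $c_1 \in L^2((0,T)\times \Omega; H^1_{\per}(Y))$. The vertical derivative of $c_0$ survives because the $x_n$-variable is lifted to the thick layer $\Omega$, while the horizontal gradients are absorbed entirely into the periodic corrector.

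The main obstacle is upgrading this to strong two-scale convergence $\cea \stoa c_0$, which is needed because the advective flux $\eps^{-2}\uea\cea$ pairs a weakly two-scale convergent factor ($\uea$) with $\cea$ in the passage to the limit. The strategy is a Kolmogorov-Simon argument in the thin-layer geometry, for which three ingredients are required: (i) an $L^\infty$-bound on $\cea$ uniform in $\eps$, obtained from a maximum principle/truncation argument for $\eqref{eq:problem_transport_micro}$ using the divergence-free property of $\uea$ and the boundary data $\ceps^b$; (ii) a bound on time shifts, obtained by testing the weak formulation with $\cea(t+h,\cdot)-\cea(t,\cdot)$ and using (i) to control the advective contribution; (iii) a bound on spatial shifts, which is the genuinely delicate step. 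For (iii) one exploits the homogeneous Neumann condition on $\partial_D\Oeaf$, which is precisely why this boundary condition is imposed in the low-diffusion case (cf.\ Remark \ref{rem:estimate_shifts}): extending $\cea$ by reflection across the lateral boundary and using the bound on $\sqrt{\Dea}\nabla \cea = \eps^{\alpha/2}\sqrt{D}\,\nabla \cea$ yields an estimate for the shifted differences sufficient to apply a Kolmogorov-Simon criterion in the rescaled layer.

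Once strong two-scale convergence of $\cea$ is available, the remainder is standard. Testing the weak formulation of $\eqref{eq:problem_transport_micro}$ with $\eps^{\alpha}\phi(t,x)\psi(x/\eps)$, where $\psi$ is $Y$-periodic and supported in $Y_f$, and passing to the two-scale limit gives the cell problem $\eqref{eq:cell_problem_diffusion}$ for $c_1$, whose linearity yields $c_1(t,x,y) = \partial_{x_n}c_0(t,x)\chi_n(y)$. Testing next with purely macroscopic $\phi(t,x)$ vanishing on $S_1^\pm$ and passing to the limit, the horizontal diffusive contributions vanish because they pair with $\partial_{x_i}\phi=0$ for $i<n$, leaving only the vertical effective diffusion $D^\ast_{nn}$ as in $\eqref{def:effective_diffusion_coefficient}$; the advective term converges to $\int c_0\,\ou^n\,\partial_{x_n}\phi$ by combining the strong two-scale convergence of $\cea$ with $\eps^{-2}\uea\toa u_0$ and the definition $\ou = \int_{Y_f} u_0\,dy$. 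The Dirichlet trace $c_0=c_0^b$ on $S_1^\pm$ and the zero initial datum are inherited from $\cea$, while uniqueness of the solution to $\eqref{eq:macro_model_transport_low_diffusion}$ follows from a standard energy estimate exploiting $\partial_{x_n}\ou^n=0$ from $\eqref{eq:Darcy_equation}$, which finally shows that the whole sequence converges.
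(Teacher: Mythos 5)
The high-level scaffolding of your proposal (a priori bounds, $L^\infty$-estimate, strong two-scale convergence via Kolmogorov-Simon, cell problem for $c_1$, limit of the macroscopic equation) matches the paper's plan, but the central step — the spatial-shift estimate needed for strong two-scale convergence — is not correctly described, and as written it would not close.

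The claim that ``extending $\cea$ by reflection across the lateral boundary and using the bound on $\sqrt{\Dea}\nabla\cea = \eps^{\alpha/2}\sqrt{D}\nabla\cea$ yields an estimate for the shifted differences'' does not work. The a priori bound only gives $\|\nabla\cea\|_{L^2(\Oeaf)}\le C\eps^{-\alpha/2}$, so a mean-value estimate yields $\eps^{-\alpha/2}\|\cea(\cdot+\xi)-\cea\|_{L^2}\le C|\xi|\eps^{-\alpha}$, which does not tend to zero uniformly in $\eps$ as $|\xi|\to 0$ — that is exactly why the paper cannot rely on the gradient bound alone. The paper instead splits the spatial shift into a part of size $\le\eps$ (handled by the gradient bound, giving $O(\eps^{1-\alpha})$) and a lattice shift $l\eps\in\eps\Z^{n-1}\times\{0\}$ compatible with the periodic microstructure, and for the lattice shift derives a Gronwall estimate from the equation satisfied by $\delta\cea$ (Proposition \ref{prop:estimate_shifts}). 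Your proposal omits the two ingredients this argument hinges on. First, the advective term in the shift equation contains $\delta(\uea\cea)=\cea\,\delta\uea+\uea(\cdot+l\eps)\delta\cea$, so one needs a shift estimate for $\delta\uea$ as well; the paper obtains it by testing the shifted Stokes system with $\eta^2\delta\uea$ and invoking the a priori pressure estimate. Second, one must assume shift estimates on the data ($\fea$, $\pea^b$, $g_{\eps,\alpha}$, $\ceps^b$), which is assumption \ref{ass:shift_f}; without it the Gronwall argument does not produce a modulus $\kappa(|l\eps|)$ that is independent of $\eps$. Moreover, in the Neumann case \ref{case:diffusion_low} the paper does \emph{not} reflect across $\partial_D\Oeaf$; it introduces a smooth cut-off $\eta$ supported in $\Sigma_h$ and works with $\psiea=\eta^2\delta\cea$ in the shrunken domain $\Oeah^f$, paying a price $C\sqrt{h}$ in the final estimate that is absorbed by the $L^\infty$-bound. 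Remark \ref{rem:estimate_shifts} explains why this cut-off strategy is compatible with Neumann conditions when $\Dea=\eps^\alpha D$ (the offending cross-term is of order $\eps^\alpha$, not $\eps^{-\alpha}$), but it is the cut-off, not a reflection, that makes the argument go through.

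A smaller error: you assert that the horizontal diffusive contributions vanish ``because they pair with $\partial_{x_i}\phi=0$ for $i<n$.'' The macroscopic test functions $\psi(t,\ox,x_n/\eps^\alpha)$ may well have $\nabla_{\ox}\psi\neq 0$. The horizontal term actually vanishes because, after normalizing by $\eps^{-\alpha}$, one is left with $\eps^{2\alpha}\nabla_{\ox}\cea\cdot\nabla_{\ox}\psi$, and $\|\eps^{\alpha}\nabla_{\ox}\cea\|_{L^2}\le C\eps^{\alpha/2}$ already tends to zero, so the extra $\eps^\alpha$ from $\Dea=\eps^\alpha D$ kills the term. The correct mechanism is scaling of the diffusion coefficient, not a structural property of the test function.
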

For the proof of this result we again refer to Section \ref{sec:compactness_transport} and \ref{sec:derivation_macro_model_transport}.

\subsection{The microscopic domain} 
\label{sec:micro_domain}

Let $n \in \N$ with $n >2$ (for the treatment of the transport problem in Section \ref{sec:transport_problem} we will restrict this assumption to $n\le 4$) and $\Sigma \coloneqq (a,b) \subset \R^{n-1}$ with $a,b \in \Z^{n-1}$ and $a_i < b_i$ for
$i = 1,...,n-1$. Additionally, we assume that $\eps > 0$ and $\eps^{-1} \in \N $ and $\alpha \in (0,\infty)$. 
Furthermore, we assume that $\eps^{\alpha}/\eps \in \N$. This is necessary to construct the perforated layer via suitable reference cells, such that no micro-cells are intersected by the outer boundary.
We define the whole layer by 
\begin{equation*}
    \Oea \coloneqq \Sigma \times (\eps^{-\alpha},\eps^{\alpha}),
\end{equation*}
together with its top/bottom $S_{\eps}^{\pm}:= \Sigma \times \{\pm \eps^{\alpha}\}$.
Within the layer, we have a fluid part $\Oeaf$ and a solid part $\Oeas$, which have a periodical microscopic structure. 
More precisely, we define the reference cell
\begin{equation*}
    Y \coloneqq (0,1)^{n}.
\end{equation*}
The cell consists again of a fluid part $Y_f$ and a solid part $Y_s$ with a common interface
$\Gamma \coloneqq \mathrm{int}(\overline{Y_f} \cap \overline{Y_s})$. Hence, we have 
\begin{equation*}
    Y = Y_f \cup Y_s \cup \Gamma. 
\end{equation*}
We assume that $Y_f$ and $Y_s$ are open and connected with Lipschitz-boundary and fulfill
$Y_f\cap Y_s = \emptyset$.
Now, we introduce the set 
\begin{equation*}
    \Ke \coloneqq \left\{ k \in \Z^{n} \; \colon \; \eps(Y + k) \subset \Oea \right\}.
\end{equation*}
In particular, we have 
\begin{equation*}
    \Oea = \mathrm{int}\left( \bigcup_{k \in \Ke} \eps(\overline{Y}+k)\right).
\end{equation*}
We define the fluid part of the layer via 
\begin{equation*}
    \Oeaf \coloneqq \mathrm{int}\left( \bigcup_{k \in \Ke} \eps(\overline{Y}_f+k)\right).
\end{equation*}
The fluid-structure interface between the fluid and the solid part is denoted by 
\begin{equation*}
    \Gea \coloneqq \mathrm{int}\left( \bigcup_{k \in \Ke} \eps(\Gamma+k)\right).
\end{equation*}
We assume that $\Oeaf$ is a connected Lipschitz domain.
Furthermore, we denote the upper and lower part of the boundary of $\Oeaf$ by
\begin{equation*}
    \Sea^{\pm} \coloneqq \partial \Oeaf \cap (\partial\Sigma \times \{\pm\eps^{\alpha}\})
\end{equation*}
and the lateral part of $\Oeaf$ is defined by 
\begin{equation*}
    \partial_D\Oeaf\coloneqq \partial \Oeaf \cap (\partial\Sigma \times (-\eps^{\alpha},\eps^{\alpha})).
\end{equation*}
Finally, we introduce the macroscopic domain (the thick layer)
\begin{align*}
    \Omega := \Sigma \times (-1,1),
\end{align*}
which can also be obtained by rescaling the domain $\Oea$. We denote the top/bottom of $\Omega$ by
\begin{align*}
    S_1^{\pm} := \Sigma \times \{\pm1\},
\end{align*}
and the lateral boundary by
\begin{align*}
    \partial_D \Omega := \partial \Omega \setminus (S^+_1 \cup S_1^-).
\end{align*}
Here, the notation $D$ is related to the no-slip (Dirichlet) boundary condition for the fluid problem (although we consider in the transport problem  Neumann-boundary or periodic boundary conditions on this part of the boundary).

\section{The two-scale convergence for thin heterogeneous layers}
    \label{sec:two_scale_compactness}
In this section we introduce the two-scale convergence adapted to thin layers with thickness of order $\eps^{\alpha}$. Compared to the classical two-scale convergence, see \cite{Allaire_TwoScaleKonvergenz,Nguetseng}, we introduce an additional variable capturing the dimension reduction. Such a two-scale convergence was introduced in \cite{buzanic2025poroelasticplatemodelobtained}. Here, we use a slightly different notation. More precisely, in \cite{buzanic2025poroelasticplatemodelobtained} they work with the rescaled thin layer (in the fixed domain $\Omega$), where we work in the physical domain $\Oea$ (respectively later in $\Oeaf$). Our aim is the derivation of several compactness results for functions with weak derivatives with different bounds with respect to the scaling parameter $\eps$ (and $\alpha$). 
    
    \begin{definition}
        We say a sequence $\vea \in L^2(\Oea)$ converges (weakly) in the two-scale sense to a limit function $v_0 \in L^2(\Omega\times Y)$, if for all
        $\phi \in L^2(\Omega ,C_\per^{0}(Y))$ it holds that 
        \begin{equation*}
            \lim_{\eps \to 0} \frac{1}{\eps^{\alpha}} \int_{\Oea} \vea(x) \cdot \phi\left(\ox,\xnea,\oxe,\xne\right) \dd x =\int_{\Omega} \int_{Y} v_0(x,y) \cdot \phi(x,y) \dd y \dd x.
        \end{equation*}
    We write $\vea \toa v_0$.
    \\
    We say that a two-scale convergent sequence $\vea$ converges strongly in the two-scale sense, if additionally it holds that
    \begin{align*}
        \lim_{\eps\to 0}\eps^{-\frac{\alpha}{2}} \|\vea\|_{L^2(\Oea)} = \|v_0\|_{L^2(\Omega \times Y)}.
    \end{align*}
    We write $\vea \stoa v_0$.
    \end{definition}

    \begin{remark}\label{rem:two_scale_convergence}\
    \begin{enumerate}[label = (\roman*)]
        \item For $\wea \toa w_0$ it holds that
        \begin{align*}
            \|w_0\|_{L^2(\Omega\times Y)} \le \liminf_{\eps \to 0} \eps^{-\frac{\alpha}{2}} \|\wea\|_{L^2(\Oea)}.
        \end{align*}

        \item As in the usual two-scale convergence, see \cite{Allaire_TwoScaleKonvergenz} and \cite{Nguetseng}, it is straightforward to show, that a product between a strongly and weakly two-scale convergent sequence converges in the distributional sense. More precisely, we have for $\wea\in L^2(\Oea)$  and $\vea \in L^2(\Oea)$ with $\wea \stoa w_0$ and $\vea \toa v_0$, it holds for every $\phi \in C^{\infty}(\overline{\Omega})$ that
    \begin{align*}
       \lim_{\eps \to 0} \frac{1}{\eps^{\alpha}} \int_{\Oea} \wea \vea \phi\left(\ox,\frac{x_n}{\eps^{\alpha}}\right) \dd x  = \int_{\Omega} \int_Y w_0 v_0 \phi\dd y \dd x.
    \end{align*}
    In our case, we need this result for the case $w_0 \in L^2(\Omega)$ only depending on the macroscopic variable, which simpliefies the proof (no density argument for the approximation of $w_0$ is necessary).
    \end{enumerate}    
    
    \end{remark}
    
    In the following we provide several compactness results for sequences in $\uea$ in $H^1(\Oea)$ for different scalings of the gradient.  First of all, we quote the standard compactness result for suitable bounded sequences in $L^2(\Oea)$, which can be obtained by similar arguments as in the proofs of \cite{Allaire_TwoScaleKonvergenz}.

\begin{lemma}\label{lem:two_scale_compactness_basic}
For every sequence $\vea \in L^2(\Oea)$ such that
\begin{align*}
    \|\vea\|_{L^2(\Oea)} \le C\eps^{\frac{\alpha}{2}}
\end{align*}
there exists $v_0 \in L^2(\Omega \times Y)$, such that (up to a subsequence) $\vea \toa v_0$.
\end{lemma}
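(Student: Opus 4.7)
The plan is to mimic the classical proof of the Allaire--Nguetseng compactness theorem, adapted to the anisotropic scaling of the thin layer. For every test function $\phi \in L^2(\Omega, C^0_{\per}(Y))$ I would introduce the linear functional
\begin{equation*}
T_{\eps}(\phi) := \frac{1}{\eps^{\alpha}} \int_{\Oea} \vea(x)\, \phi\!\left(\ox,\xnea,\oxe,\xne\right)\, \dd x,
\end{equation*}
extract a weakly convergent subsequence $T_{\eps_k} \to T$ in an appropriate dual, and identify $T$ with integration against some $v_0 \in L^2(\Omega \times Y)$ via Riesz representation. This strategy reduces the lemma to one uniform bound plus a standard density argument.

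The crucial step is the a priori bound $|T_{\eps}(\phi)| \le C \|\phi\|_{L^2(\Omega\times Y)}$. By Cauchy--Schwarz,
\begin{equation*}
|T_\eps(\phi)| \le \eps^{-\alpha} \|\vea\|_{L^2(\Oea)} \left( \int_{\Oea} \left|\phi\!\left(\ox,\xnea,\oxe,\xne\right)\right|^2 \dd x \right)^{1/2},
\end{equation*}
and the first factor is bounded by $C\eps^{-\alpha/2}$ by hypothesis. For the second factor, the change of variable $z_n = x_n/\eps^{\alpha}$ gives $\dd x_n = \eps^{\alpha} \dd z_n$ and $x_n/\eps = \eps^{\alpha-1} z_n$, so
\begin{equation*}
\int_{\Oea} \left|\phi\!\left(\ox,\xnea,\oxe,\xne\right)\right|^2 \dd x = \eps^{\alpha} \int_{\Omega} \left|\phi\!\left(\ox,z_n,\oxe,\eps^{\alpha-1}z_n\right)\right|^2 \dd \ox\, \dd z_n.
\end{equation*}
Because $\phi$ is continuous and $Y$-periodic in the fast variables, and because the horizontal oscillation scale $\eps$ and the vertical oscillation scale $\eps^{1-\alpha}$ both vanish as $\eps\to 0$ (here $\alpha \in (0,1)$ is essential: $\eps^{\alpha-1} \to \infty$ forces fast periodic oscillation in $y_n$), a standard averaging argument yields that the right-hand side converges to $\eps^{\alpha} \|\phi\|_{L^2(\Omega\times Y)}^2$ up to lower order terms. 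Combining both factors gives the desired uniform bound.

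Once $T_\eps$ is uniformly bounded on the dense subspace $L^2(\Omega, C^0_{\per}(Y))$ of $L^2(\Omega \times Y)$, Banach--Alaoglu produces a subsequence and a limit functional $T$ on $L^2(\Omega \times Y)$ with $T_{\eps_k}(\phi) \to T(\phi)$ for every admissible $\phi$. Riesz representation supplies $v_0 \in L^2(\Omega \times Y)$ with $T(\phi) = \int_\Omega \int_Y v_0 \phi\, \dd y\, \dd x$, which is exactly the two-scale convergence $\vea \toa v_0$.

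The main technical obstacle is the simultaneous averaging in the horizontal fast variable $\ox/\eps$ and the vertical fast variable $\eps^{\alpha-1} z_n$ for admissible test functions. For $\phi$ of the separated form $\psi(x)\eta(y)$ with $\eta \in C_{\per}^0(Y)$ this follows from a two-step Riemann--Lebesgue-type lemma combined with the integer ratio assumption $\eps^\alpha/\eps \in \N$, which ensures that no period cells are cut by $\partial \Oea$; the general case is handled by the density of such separated tensor products in $L^2(\Omega, C^0_{\per}(Y))$. With this averaging lemma in hand, everything else is a routine adaptation of \cite{Allaire_TwoScaleKonvergenz,Nguetseng} to the two-scale framework from \cite{buzanic2025poroelasticplatemodelobtained}.
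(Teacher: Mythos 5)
Your proof is correct and follows precisely the route the paper itself indicates: the lemma is stated in the paper without a detailed proof, only with the remark that it ``can be obtained by similar arguments as in the proofs of \cite{Allaire_TwoScaleKonvergenz},'' which is exactly what you carry out, rescaling $z_n = x_n/\eps^{\alpha}$, using the anisotropic Riemann--Lebesgue averaging to control the second Cauchy--Schwarz factor, and then running Banach--Alaoglu plus Riesz representation. One small clarification worth being explicit about: for the Banach--Alaoglu extraction you first want the crude $\eps$-uniform bound $|T_\eps(\phi)| \le C\|\phi\|_{L^2(\Omega,C^0_{\per}(Y))}$ (which follows directly from $\sup_y|\phi(\cdot,y)| \in L^2(\Omega)$, without any averaging), extract $T$ as a weak-$\ast$ limit in the dual of the separable space $L^2(\Omega,C^0_{\per}(Y))$, and only then invoke the averaging limit to show $|T(\phi)|\le C\|\phi\|_{L^2(\Omega\times Y)}$, so that $T$ extends to $L^2(\Omega\times Y)$ and Riesz applies.
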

Now, our first compactness result for Sobolev functions treats the case when the gradient is of one $\eps$-order lower then the function itself, leading to the case that the limit function is depending on the macroscopic and the microscopic variable.    
    \begin{proposition}\label{prop:compactness_gradient_order_eps}
        Let $\vea \in H^1(\Oea)$ with 
        \begin{equation*}
            \Lpnorm{\vea}{2}{\Oea} + \eps\Lpnorm{\nabla \vea}{2}{\Oea} \leq C\eps^{\alpha/2}.
        \end{equation*}
        Then there exists $v_0 \in L^2(\Omega, H^1_\per(Y))$ such that
        \begin{equation*}
            \vea \toa v_0, \quad \nabla \vea \toa \nabla_y v_0.
        \end{equation*}
    \end{proposition}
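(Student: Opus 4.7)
The plan is to first extract two-scale limits via Lemma~\ref{lem:two_scale_compactness_basic}. Applied to $\vea$, which already satisfies the required $\eps^{\alpha/2}$-scaling by hypothesis, it yields a subsequence (not relabelled) and a limit $v_0 \in L^2(\Omega\times Y)$ with $\vea \toa v_0$. Applied to the rescaled gradient $\eps\nabla\vea$, whose $L^2(\Oea)$-norm is also bounded by $C\eps^{\alpha/2}$ by hypothesis, a diagonal extraction produces a common subsequence along which $\eps\nabla\vea \toa \xi_0$ for some $\xi_0 \in L^2(\Omega\times Y)^n$. The core remaining task is to identify $\xi_0 = \nabla_y v_0$ in the $Y$-periodic distributional sense, which simultaneously upgrades the regularity of $v_0$ to $L^2(\Omega, H^1_\per(Y))$ and yields the asserted convergence $\nabla\vea \toa \nabla_y v_0$.

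To carry out the identification, I would pick an arbitrary test field $\psi \in C^\infty_c(\Omega; C^\infty_\per(Y))^n$ and define $\Psi_\eps(x) := \psi(\ox, x_n/\eps^\alpha, \ox/\eps, x_n/\eps)$. By the chain rule, $\nabla_x \Psi_\eps$ splits into three groups of terms, scaling as $1$ (from the $\ox$-slot), $\eps^{-\alpha}$ (from the vertical macroscopic slot $x_n/\eps^\alpha$), and $\eps^{-1}$ (from the microscopic slots). Because $\psi$ has compact support in $\Omega$, for $\eps$ small the support of $\Psi_\eps$ lies strictly inside $\Oea$, so integration of $\eps\nabla\vea\cdot\Psi_\eps$ by parts over $\Oea$ produces no boundary contribution. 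After dividing by $\eps^{\alpha}$, the three resulting terms inherit prefactors $\eps$, $\eps^{1-\alpha}$, and $1$ respectively; the first two vanish in the limit (the second crucially because $\alpha<1$), and the third converges via the two-scale convergence of $\vea$ applied to the admissible test function $\nabla_y\cdot\psi$. Combined with the two-scale convergence of $\eps\nabla\vea$ on the left-hand side, one arrives at
\begin{equation*}
\int_\Omega\int_Y \xi_0 \cdot \psi\, \dd y\, \dd x \;=\; -\int_\Omega\int_Y v_0\,\nabla_y\cdot\psi\, \dd y\, \dd x,
\end{equation*}
valid for every admissible $\psi$. This characterises $\xi_0$ as the weak $y$-gradient of $v_0$ with $Y$-periodic traces, so $v_0(x,\cdot) \in H^1_\per(Y)$ for a.e.\ $x$ with $\nabla_y v_0 = \xi_0 \in L^2(\Omega\times Y)^n$, which is the desired conclusion.

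The main obstacle I anticipate is taming the $\eps^{-\alpha}$-blow-up produced by the vertical macroscopic slot $x_n/\eps^\alpha$ when differentiating $\Psi_\eps$: this is precisely where the $\eps$-prefactor inherited from the scaled gradient must cooperate with the assumption $\alpha<1$, yielding the vanishing $\eps^{1-\alpha}$ factor. A secondary delicate point is to verify that no boundary contribution appears in the integration by parts, which is enforced by the compact support of $\psi$ in the fixed macroscopic domain $\Omega$: for $\eps$ sufficiently small, the support of $\Psi_\eps$ is pushed strictly inside the thin physical layer $\Oea$, staying away from both $S_\eps^{\pm}$ (via the rescaled vertical slot) and the lateral boundary $\partial_D\Oea$ (via the $\ox$-slot).
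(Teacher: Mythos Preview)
Your argument is correct. You extract two-scale limits for $\vea$ and $\eps\nabla\vea$ from Lemma~\ref{lem:two_scale_compactness_basic}, and then identify $\xi_0=\nabla_y v_0$ by integration by parts against oscillating test fields $\Psi_\eps$; the vanishing of the intermediate term relies precisely on $\alpha<1$, which you flag, and the compact support of $\psi$ in $\Omega$ indeed suppresses all boundary contributions (in fact for every $\eps$, not just small $\eps$, since the vertical slot $x_n/\eps^{\alpha}$ rescales $(-\eps^{\alpha},\eps^{\alpha})$ exactly onto $(-1,1)$).

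Two remarks on the comparison with the paper. First, the paper does not give a self-contained proof of this proposition: it simply invokes \cite[Lemma~B.4]{buzanic2025poroelasticplatemodelobtained} for the rescaled domain and notes that the statement follows by the transformation rule. Your direct argument is the standard Nguetseng--Allaire identification procedure transplanted to the thin-layer two-scale convergence, so there is no substantive methodological difference from what one would expect the cited lemma to contain; you just supply the details the paper outsources. Second, note a typo in the statement as printed: the conclusion should read $\eps\nabla\vea\toa\nabla_y v_0$, not $\nabla\vea\toa\nabla_y v_0$ (indeed $\|\nabla\vea\|_{L^2(\Oea)}$ is only controlled by $C\eps^{\alpha/2-1}$, and the application in Proposition~\ref{prop:compactness_micro_solution_fluid} confirms this reading). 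Your proof establishes exactly this corrected version; the phrase ``yields the asserted convergence $\nabla\vea\toa\nabla_y v_0$'' in your last sentence should accordingly carry the factor $\eps$.
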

    \begin{proof} This result was shown in
    \cite[Lemma B.4]{buzanic2025poroelasticplatemodelobtained} for the two-scale convergence on the rescaled domain $\Omega$ and in our notation in follows directly by the transformation rule.
    \end{proof}

\begin{proposition}\label{prop:compactness_v0_dn_v0}
Let $\vea \in H^1(\Oea)$ be a sequence such that
\begin{align*}
    \|\vea\|_{L^2(\Oea)} + \eps^{\alpha} \|\nabla \vea \|_{L^2(\Oea)} \le C \eps^{\frac{\alpha}{2}}.
\end{align*}
Then there exists $v_0 \in L^2(\Omega)$ with $\partial_n v_0 \in L^2(\Omega)$ and $v_1 \in L^2(\Omega,H_{\per}^1(Y)/\R)$ such that up to a subsequence
\begin{align*}
    \vea \toa v_0,\qquad \eps^{\alpha} \nabla \vea \toa \partial_n v_0 e_n + \nabla_y v_1.
\end{align*}
\end{proposition}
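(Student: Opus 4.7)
The plan is to follow the classical Allaire/Nguetseng blueprint for identifying two-scale limits of gradients, adapted to the fact that the gradient is controlled only at the weak scale $\eps^{-\alpha}$ and not at the microscopic scale $\eps^{-1}$. The crucial point that will make every step work is $\alpha\in(0,1)$, so that $\eps^{1-\alpha}\to 0$, which both kills the "wrong" terms appearing after integration by parts and causes $v_0$ to lose its $y$-dependence entirely.

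\textbf{Step 1 (extraction).} By Lemma \ref{lem:two_scale_compactness_basic} applied to the sequences $\vea$ and $\eps^\alpha\nabla\vea$, each of which is bounded by $C\eps^{\alpha/2}$ in $L^2(\Oea)$, I pass to a subsequence and obtain $v_0\in L^2(\Omega\times Y)$ and $\xi_0\in L^2(\Omega\times Y)^n$ with $\vea\toa v_0$ and $\eps^\alpha\nabla\vea\toa\xi_0$.

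\textbf{Step 2 ($v_0$ is $y$-independent).} Fix $\phi\in C_c^\infty(\Omega,C^\infty_\per(Y))^n$ and integrate by parts in $\eps\int_{\Oea}\nabla\vea\cdot\phi(\ox,x_n/\eps^\alpha,\ox/\eps,x_n/\eps)\,dx$. The boundary contributions vanish for $\eps$ small, because $\phi$ has compact support in $\Omega=\Sigma\times(-1,1)$. Expanding the chain rule gives
\[
\mathrm{div}_x[\phi(\ldots)]=\mathrm{div}_{\ox}\phi+\eps^{-\alpha}\partial_{x_n}\phi_n+\eps^{-1}\mathrm{div}_y\phi.
\]
Dividing the resulting identity by $\eps^\alpha$ and letting $\eps\to 0$, each term containing $\mathrm{div}_{\ox}\phi$, $\partial_{x_n}\phi_n$ or the original left-hand side carries a prefactor of at least $\eps^{1-\alpha}$ and therefore vanishes; only the term $-\eps^{-\alpha}\int_{\Oea}\vea\,\mathrm{div}_y\phi\,dx$ survives, with limit $-\int_\Omega\int_Y v_0\,\mathrm{div}_y\phi\,dy\,dx$. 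Consequently $\nabla_y v_0=0$ in the distributional sense, which yields $v_0\in L^2(\Omega)$.

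\textbf{Step 3 (identification of $\partial_n v_0$).} Test with $\phi\in C_c^\infty(\Omega)$. For $i<n$, integration by parts together with the bound $\|\vea\|_{L^2(\Oea)}\leq C\eps^{\alpha/2}$ gives a right-hand side of size $O(\eps^\alpha)$, so the mean $\bar\xi_{0,i}:=\int_Y\xi_{0,i}\,dy$ vanishes. For $i=n$, the $\eps^{-\alpha}$ arising from differentiating $x_n/\eps^\alpha$ exactly cancels the $\eps^\alpha$ factor in $\eps^\alpha\partial_{x_n}\vea$, and passing to the limit yields
\[
\int_\Omega\bar\xi_{0,n}\phi\,dx=-\int_\Omega v_0\,\partial_{x_n}\phi\,dx\qquad\forall\,\phi\in C_c^\infty(\Omega),
\]
so $\partial_{x_n}v_0=\bar\xi_{0,n}\in L^2(\Omega)$.

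\textbf{Step 4 (oscillating part of $\xi_0$).} Test with $\Psi\in C_c^\infty(\Omega,C^\infty_\per(Y))^n$ satisfying $\mathrm{div}_y\Psi=0$. After integration by parts the $\mathrm{div}_y$ contribution disappears, the $\mathrm{div}_{\ox}$ term is again $O(\eps^\alpha)$, and the $\eps^{-\alpha}\partial_{x_n}\Psi_n$ term survives; using the $L^2$-regularity of $\partial_{x_n}v_0$ from Step 3 to integrate by parts once more in $x_n$, I obtain
\[
\int_\Omega\int_Y\bigl(\xi_0-\partial_{x_n}v_0\,e_n\bigr)\cdot\Psi\,dy\,dx=0
\]
for every $y$-solenoidal $\Psi$. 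The classical de Rham-type characterization in periodic Sobolev spaces then produces $v_1\in L^2(\Omega,H^1_\per(Y)/\R)$ with $\xi_0-\partial_{x_n}v_0\,e_n=\nabla_y v_1$, which is the claimed decomposition.

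The main technical obstacle is the careful bookkeeping of the three competing scales $\eps$, $\eps^\alpha$, $1$ in the chain rule for the oscillating test functions, and ensuring that the assumption $\alpha\in(0,1)$ is used at precisely the right spot (Step 2) so that every term except the one producing $\mathrm{div}_y v_0=0$ vanishes; beyond that the argument is standard.
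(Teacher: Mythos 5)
Your argument is correct and follows essentially the same route as the paper's proof: extract $v_0$ and $\xi_0$ by Lemma \ref{lem:two_scale_compactness_basic}, kill the $y$-dependence of $v_0$ by bookkeeping the scales (the paper short-circuits this by invoking Proposition \ref{prop:compactness_gradient_order_eps} after noting $\eps\|\nabla\vea\|_{L^2}\leq C\eps^{\alpha/2}$ since $\alpha<1$, which amounts to the same computation), identify $\partial_n v_0$ via $y$-independent test functions, and then characterize $\xi_0$ on $y$-divergence-free test fields followed by the periodic Helmholtz/de Rham decomposition. The extra observation in your Step 3 that $\int_Y\xi_{0,i}\,dy=0$ for $i<n$ is correct but not needed — it falls out of Step 4 anyway since $\nabla_y v_1$ has zero $Y$-mean.
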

\begin{proof}
By the compactness result in Lemma \ref{lem:two_scale_compactness_basic} there exist $v_0 \in L^2(\Omega \times Y) $ and $\xi_0 \in L^2(\Omega \times Y)^n$ such that up to a subsequence
\begin{align*}
\vea \toa v_0, \qquad \eps^{\alpha}\nabla \vea \toa \xi_0.
\end{align*}
Since $\alpha\in (0,1)$ we obtain
\begin{align*}
    \eps\|\nabla \vea \|_{L^2(\Oea)} \le C\eps^{\frac{\alpha}{2}}
\end{align*}
and Proposition \ref{prop:compactness_gradient_order_eps} immediately implies $\nabla_y v_0 = 0$ and therefore $v_0(x,y) = v_0(x)$ is independent of $y$. Next, we show $\partial_n v_0 \in L^2(\Omega)$. Choose $\phi \in C_0^{\infty}(\Omega)$ and use the two-scale compactness of $\vea$ and $\eps^{\alpha} \nabla\vea$ to obtain
\begin{align*}
\int_{\Omega}\int_Y \xi_0^n \phi\dd y \dd x &= \lim_{\eps \to 0} \frac{1}{\eps^{\alpha}} \int_{\Oea} \eps^{\alpha} \partial_n \vea \phi\left(\ox,\frac{x_n}{\eps^{\alpha}}\right) \dd x 
\\
&= - \lim_{\eps \to 0} \frac{1}{\eps^{\alpha}} \int_{\Oea} \vea \partial_n \phi \left(\ox,\frac{x_n}{\eps^{\alpha}}\right) \dd x = - \int_{\Omega}\int_{Y} v_0 \partial_n \phi\dd y \dd x.
\end{align*}
This implies $\partial_n v_0 = \int_Y \xi_0^n\dd y$ (remember that $v_0$ is independent of $y$). It remains to identify the limit $\xi_0$. For this, we choose $\phi \in C_0^{\infty}(\Omega, C_{\per}^{\infty}(Y))^n$ such that $\nabla_y \cdot \phi = 0$ and obtain by similar arguments as above
\begin{align*}
\int_{\Omega}\int_Y \xi_0 \cdot \phi\dd y \dd x &= \lim_{\eps \to 0} \frac{1}{\eps^{\alpha}} \int_{\Oea} \eps^{\alpha} \nabla \vea \cdot \phi \left( \ox ,\frac{x_n}{\eps^{\alpha}}, \frac{x}{\eps}\right) \dd x 
\\
&= - \lim_{\eps \to 0} \frac{1}{\eps^{\alpha}} \int_{\Oea} \vea \left[ \eps^{\alpha} \nabla_{\ox} \cdot \phi + \partial_{x_n} \phi_n \right]\left( \ox ,\frac{x_n}{\eps^{\alpha}}, \frac{x}{\eps}\right) \dd x 
\\
&=-\int_{\Omega}\int_Y v_0 \partial_{x_n} \phi_n\dd y \dd x = \int_{\Omega} \int_Y \partial_n v_0 e_n \cdot \phi\dd y \dd x.
\end{align*}
By the Helmholtz-decomposition we obtain the existence of $v_1 \in L^2(\Omega,H_{\per}^1(Y)/\R) $ such that $\xi_0 = \partial_n w_0 e_n + \nabla_y v_1$, which gives the desired result.
\end{proof}
It is well-known (and can be easily shown by adapting the proof of the trace inequality from \cite{EvansPartialDifferentialEquations}), that functions $w \in L^2(\Omega)$ with weak derivative $\partial_n w \in L^2(\Omega)$ have traces on $L^2(S_1^{\pm})$. Hence, under the conditions of Proposition \ref{prop:compactness_v0_dn_v0} we obtain that $v_0$ has traces on the top/bottom $S_1^{\pm}$ of $\Omega$. The following result shows that the trace of $\vea$ on $S_{\eps}^{\pm}$ is preserved under the two-scale convergence:
\begin{proposition}\label{prop:two_scale_trace}
Under the assumptions of  \ref{prop:compactness_v0_dn_v0} it holds that 
\begin{align*}
    \|\vea\|_{L^2(S_{\eps}^{\pm})} \le C.
\end{align*}
Further, up to a subsequence it holds that $\vea|_{S_{\eps}^{\pm}} \rightts v_0|_{S_1^{\pm}}$ weakly in $L^2(\Sigma)$ (in the standard two-scale sense, see \cite{Allaire_TwoScaleKonvergenz}).
\end{proposition}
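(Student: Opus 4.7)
The plan is a thin-layer analog of the standard trace-under-two-scale-convergence argument: via the fundamental theorem of calculus in the vertical direction combined with a smooth cut-off, the trace on $S_{\eps}^{\pm}$ will be rewritten in terms of bulk integrals on $\Oea$, which can then be controlled by the a priori bounds and identified via Proposition~\ref{prop:compactness_v0_dn_v0}.

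For the uniform bound, I would fix a smooth $\eta\colon[-1,1]\to\R$ with $\eta(-1)=0$ and $\eta(1)=1$, and apply the fundamental theorem of calculus to $x_n\mapsto\vea(\ox,x_n)^2\eta(x_n/\eps^{\alpha})^2$ on $(-\eps^{\alpha},\eps^{\alpha})$. Integrating over $\Sigma$ and applying Cauchy-Schwarz yields an inequality of the form
\begin{align*}
\|\vea\|_{L^2(S_{\eps}^+)}^2 \leq C\|\vea\|_{L^2(\Oea)}\|\partial_n\vea\|_{L^2(\Oea)} + C\eps^{-\alpha}\|\vea\|_{L^2(\Oea)}^2.
\end{align*}
The hypotheses $\|\vea\|_{L^2(\Oea)}\leq C\eps^{\alpha/2}$ and $\eps^{\alpha}\|\nabla\vea\|_{L^2(\Oea)}\leq C\eps^{\alpha/2}$ (the latter implying $\|\partial_n\vea\|_{L^2(\Oea)}\leq C\eps^{-\alpha/2}$) then make both terms on the right-hand side uniformly bounded. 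The case of $S_{\eps}^-$ is symmetric, using a cut-off with $\eta(-1)=1$, $\eta(1)=0$.

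To identify the two-scale limit, I would start from the linear identity obtained by the fundamental theorem of calculus with the same $\eta$,
\begin{align*}
\vea(\ox,\eps^{\alpha}) = \int_{-\eps^{\alpha}}^{\eps^{\alpha}} \partial_n\vea(\ox,x_n)\,\eta(x_n/\eps^{\alpha})\,dx_n + \eps^{-\alpha}\int_{-\eps^{\alpha}}^{\eps^{\alpha}} \vea(\ox,x_n)\,\eta'(x_n/\eps^{\alpha})\,dx_n,
\end{align*}
pair it with $\psi(\ox,\ox/\eps)$ for $\psi\in L^2(\Sigma,C^0_{\per}((0,1)^{n-1}))$, and integrate over $\Sigma$. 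After inserting the appropriate factors $\eps^{\alpha}$ to match the normalization of the two-scale convergence, each integral takes the form required by the definition with admissible test function $\phi(x,y)=\psi(\ox,\oy)\eta(x_n)$ or $\phi(x,y)=\psi(\ox,\oy)\eta'(x_n)$ in $L^2(\Omega,C^0_{\per}(Y))$ (these are constant in $y_n$, hence trivially $Y$-periodic). Applying Proposition~\ref{prop:compactness_v0_dn_v0} and using that $v_0$ is independent of $y$ together with $\int_0^1\partial_{y_n}v_1\,dy_n=0$ (by $y_n$-periodicity of $v_1$), the limit equals
\begin{align*}
\int_{\Omega}\bigl[\partial_n v_0(x)\eta(x_n)+v_0(x)\eta'(x_n)\bigr]\bar\psi(\ox)\,dx, \qquad \bar\psi(\ox):=\int_{(0,1)^{n-1}}\psi(\ox,\oy)\,d\oy.
\end{align*}
A one-dimensional integration by parts in $x_n$ on $(-1,1)$, together with $\eta(1)=1$, $\eta(-1)=0$ and the existence of the trace $v_0|_{S_1^+}\in L^2(\Sigma)$ (granted by $v_0,\partial_n v_0\in L^2(\Omega)$), collapses this to $\int_{\Sigma}v_0(\ox,1)\bar\psi(\ox)\,d\ox$, which is precisely the standard two-scale convergence $\vea|_{S_{\eps}^+}\rightts v_0|_{S_1^+}$. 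The $S_{\eps}^-$ case is handled identically with a flipped cut-off.

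I do not expect a serious obstacle: the argument is essentially bookkeeping. The main care required is in tracking the several factors of $\eps^{\alpha}$ coming from the rescaled cut-off $\eta(x_n/\eps^{\alpha})$ and from the normalization in the two-scale convergence, and in verifying the admissibility of the test functions $\phi(x,y)=\psi(\ox,\oy)\eta(x_n)$ in $L^2(\Omega,C^0_{\per}(Y))$. The identification relies crucially on the structural content of Proposition~\ref{prop:compactness_v0_dn_v0}, namely that the two-scale limit of $\eps^{\alpha}\partial_n\vea$ is $\partial_n v_0\, e_n+\nabla_y v_1$ with a corrector $v_1$ whose $y_n$-average vanishes.
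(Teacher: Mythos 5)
Your proof is correct and uses essentially the same approach as the paper: both establish the uniform bound via a one-dimensional trace-type estimate controlled by $\|\vea\|_{L^2}$ and $\|\partial_n\vea\|_{L^2}$, and both identify the two-scale limit of the trace by converting the boundary integral into bulk integrals over $\Oea$ and passing to the limit via Proposition~\ref{prop:compactness_v0_dn_v0}, using $\int_0^1\partial_{y_n}v_1\,\dd y_n=0$ and integration by parts in $x_n$. The paper implements this with the rescaling $\hvea(x)=\vea(\ox,\eps^{\alpha}x_n)$ plus the standard trace inequality and the divergence theorem with a test function vanishing on the opposite face, whereas you write out the underlying fundamental-theorem-of-calculus computation with an explicit cut-off $\eta$ playing the same role; the content is the same.
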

\begin{proof}
Defining $\hvea \in H^1(\Omega)$ by $ \hvea(x):= \vea(\ox,\eps^{\alpha} x_n)$. Then, a simple rescaling argument gives
\begin{align*}
\| \vea\|_{L^2(S_{\eps}^{\pm})} = \|\hvea\|_{L^2(S_1^{\pm})} &\le C \left( \|\hvea\|_{L^2(\Omega)} + \|\partial_n \hvea\|_{L^2(\Omega)} \right)
\\
&= C \left( \frac{1}{\eps^{\frac{\alpha}{2}}} \|\vea\|_{L^2(\Oea)} + \eps^{\frac{\alpha}{2}} \|\partial_n \vea\|_{L^2(\Oea)}   \right) \le C.
\end{align*}
In particular, there exists $\eta_0^{\pm} \in L^2(\Sigma \times (0,1)^{n-1})$ such that up to a subsequence $\vea|_{S_{\eps}^{\pm}} \rightts \eta_0^{\pm}$. Choosing $\phi \in C_0^{\infty}(\Omega \cup S_1^{\pm}, C_{\per}^1((0,1)^{n-1}))$ (constantly extended in $y_n$-direction), we obtain (with $\nu_n = \pm1 $ on $S_{\eps}^{\pm}$)
\begin{align*}
\int_{\Sigma} \int_{(0,1)^{n-1}}  \eta_0^{\pm}  \phi(\ox,\pm 1, \bar{y}) d\ox &= \lim_{\eps\to 0} \int_{\Sigma} \vea|_{S_{\eps}^{\pm} } \phi\left(\ox,\pm 1, \frac{\ox}{\eps}\right) d\ox 
\\
&= \lim_{\eps\to \infty} \int_{S_{\eps}^{\pm}} \vea \phi \left(\ox , \frac{x_n}{\eps^{\alpha}},\frac{\ox}{\eps}\right) d\sigma 
\\
&= \lim_{\eps \to \infty} \pm\int_{\Oea} \partial_n \vea \phi\left(\ox , \frac{x_n}{\eps^{\alpha}},\frac{\ox}{\eps}\right) + \frac{1}{\eps^{\alpha}} \vea \partial_n \phi \left(\ox , \frac{x_n}{\eps^{\alpha}},\frac{\ox}{\eps}\right)  \dd x
\\
&=  \pm \int_{\Omega} \int_Y (\partial_n v_0 + \partial_{y_n} v_1 ) \phi + v_0 \partial_n \phi\dd y \dd x = \int_{S_1^{\pm}} \int_{(0,1)^{n-1}} v_0 \phi  d\sigma,
\end{align*}
where at the end we used integration by parts, $\int_Y\partial_{y_n} v_1\dd y = 0$ and the fact that $v_0$ is independent of $y$. This gives the desired result.
\end{proof}
Next, we give a two-scale compactness result when the components of the gradient are scaled differently. We show here directly the result for the perforated domain $\Oeaf$, for which we have to use a special Helmholtz-decomposition. Let us compare the situation to the scaling in Proposition \ref{prop:compactness_v0_dn_v0}. A function $\vea \in H^1(\Oeaf)$ fulfilling the estimate in this proposition (with $\Oea$ replaced by $\Oeaf$), can be extended with the extension operator $E_{\eps}$ from Lemma \ref{lem:extension_operator} to a function $E_{\eps}\vea \in H^1(\Oea)$ fulfilling the same a priori estimate. Hence, we immediately obtain, from Proposition \ref{prop:compactness_v0_dn_v0}, that
\begin{align}\label{conv:aux_two_scale}
    \chi_{\Oeaf} \vea \toa \chi_{Y_f} v_0,\qquad \eps^{\alpha} \chi_{\Oeaf} \nabla \vea \toa \chi_{Y_f} \left( \partial_n v_0 e_n + \nabla_y v_1\right).
\end{align}
In other words, the extension operator allows to treat  the perforated layer as a homogeneous layer (this is a common approach in the homogenization theory for porous media). However, for different scalings for the gradient $\nabla_{\ox}$ with respect to the horizontal variable and the $n$-th derivative $\partial_n$, as given in the following proposition (related to the case of high diffusion in horizontal direction), such an argument is not possible, since the extension operator from Lemma \ref{lem:extension_operator} only allows to control the partial derivatives of the extended function by the full gradient of the function itself, which gives another estimate for the extended function, see also Section \ref{sec:transport_problem} for more details. We introduce the space
\begin{align*}
  H^1_{\per,\nabla_{\oy}}(Y_f):=  \{p \in L^2(Y_f) \, : \, \nabla_{\oy} p \in L^2(Y_f)^{n-1}, \, p \mbox{ is } (0,1)^{n-1}\mbox{-periodic} \},
\end{align*}
together with the norm
\begin{align*}
\|p\|_{H^1_{\per,\nabla_{\oy}}(Y_f)}^2 := \|p\|_{L^2(Y_f)}^2 + \|\nabla_{\oy} p\|_{L^2(Y_f)}^2.
\end{align*}
\begin{proposition}\label{prop:compactness_v0_nabla_v0}
Let $\vea \in H^1(\Oeaf)$ be a sequence such that
\begin{align*}
    \|\vea\|_{L^2(\Oeaf)} + \|\nabla_{\ox} \vea\|_{L^2(\Oeaf)} +  \eps^{\alpha} \|\partial_n \vea \|_{L^2(\Oeaf)} \le C \eps^{\frac{\alpha}{2}}.
\end{align*}
Then there exist $v_0 \in H^1(\Omega)$, $v_1 \in L^2(\Omega,H_{\per}^1(Y_f)/\R)$ with $\nabla_{\oy} v_1 = 0$, and $\bar{v}_1 \in L^2(\Omega,H^1_{\per,\nabla_{\oy}}(Y_f))$,  such that up to a subsequence
\begin{align*}
    \chi_{\Oeaf} \vea \toa  \chi_{Y_f} v_0,\qquad   \chi_{\Oeaf}(\nabla_{\ox} \vea , \eps^{\alpha} \partial_n \vea)  \toa \chi_{Y_f}\left(\nabla v_0 + (\nabla_{\oy} \bar{v}_1,\partial_{y_n} v_1)\right).
\end{align*}
The result is also valid for $\Oea$ instead of $\Oeaf$.
\end{proposition}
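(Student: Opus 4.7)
The plan is to combine two sources of compactness: the extension operator from Lemma \ref{lem:extension_operator} applied to the (weaker) full-gradient bound, which recovers the vertical corrector, and a direct Helmholtz-type argument for the horizontal component that bypasses the extension. Since each of $\chi_{\Oeaf}\vea$, $\chi_{\Oeaf}\nabla_{\ox}\vea$, and $\chi_{\Oeaf}\eps^\alpha\partial_n\vea$ is bounded by $C\eps^{\alpha/2}$ in $L^2(\Oea)$, a first application of Lemma \ref{lem:two_scale_compactness_basic} yields along a subsequence two-scale limits $v_0$, $\bar{\xi}_0$ and $\xi_0^n$, all supported on $\Omega\times Y_f$.

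For the vertical corrector, the full-gradient bound $\|\nabla\vea\|_{L^2(\Oeaf)}\le C\eps^{-\alpha/2}$ gives $\|E_\eps\vea\|_{L^2(\Oea)}+\eps^\alpha\|\nabla E_\eps\vea\|_{L^2(\Oea)}\le C\eps^{\alpha/2}$, so Proposition \ref{prop:compactness_v0_dn_v0} applies to the extended function and provides $v_0\in L^2(\Omega)$ with $\partial_n v_0\in L^2(\Omega)$ and $v_1\in L^2(\Omega, H^1_\per(Y)/\R)$ such that $E_\eps\vea\toa v_0$ and $\eps^\alpha\nabla E_\eps\vea\toa \partial_n v_0\,e_n+\nabla_y v_1$. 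Since $E_\eps\vea=\vea$ on $\Oeaf$, restricting to $\Oeaf$ identifies $v_0$ with the two-scale limit from Step 1 and gives $\xi_0^n=\partial_n v_0+\partial_{y_n}v_1$. Moreover, the direct bound $\|\chi_{\Oeaf}\nabla_{\ox}\vea\|_{L^2(\Oea)}\le C\eps^{\alpha/2}$ makes the two-scale limit of $\eps^\alpha\chi_{\Oeaf}\nabla_{\ox}\vea$ vanish, which combined with the extension-based identification $\eps^\alpha\chi_{\Oeaf}\nabla_{\ox}E_\eps\vea\toa\chi_{Y_f}\nabla_{\oy}v_1$ forces $\nabla_{\oy}v_1=0$, so $v_1=v_1(x,y_n)$.

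The main obstacle is the identification of $\bar{\xi}_0$, since the extension loses the better horizontal scaling (the inequality $\|\nabla_{\ox}E_\eps\vea\|\le C\|\nabla\vea\|$ only gives $O(\eps^{-\alpha/2})$ on the right-hand side, as noted in the paragraph preceding the proposition). Instead I would test $\chi_{\Oeaf}\nabla_{\ox}\vea$ against $\phi(x)\psi(x/\eps)$ with $\phi\in C_0^\infty(\Omega)$ and $\psi\in C^\infty_\per(Y)^{n-1}$ satisfying $\nabla_{\oy}\cdot\psi=0$ in $Y$ and $\psi\cdot\bar{\nu}=0$ on $\Gamma$. For such $\psi$ the $\Gea$-surface term produced by integration by parts vanishes and the $\tfrac{1}{\eps}$-singular contribution in $\nabla_{\ox}\cdot(\phi\psi(x/\eps))$ drops out, leaving
\[
\int_\Omega\int_{Y_f}\phi\,\psi\cdot\bar{\xi}_0\dd y\dd x \;=\; -\int_\Omega v_0\Big(\int_{Y_f}\psi\dd y\Big)\cdot\nabla_{\ox}\phi\dd x .
\]
Varying $\phi$ and choosing $\psi$ with prescribed nonzero mean over $Y_f$ shows that each $\partial_{x_i}v_0$ defines a bounded linear functional on $L^2(\Omega)$, hence $v_0\in H^1(\Omega)$. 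Integrating by parts on $\Omega$ rewrites the right-hand side as $\int_\Omega\int_{Y_f}\phi\,\psi\cdot\nabla_{\ox}v_0\dd y\dd x$, so $\bar{\xi}_0-\nabla_{\ox}v_0$ is $L^2(Y_f)^{n-1}$-orthogonal to every admissible $\psi$. The Helmholtz decomposition of $L^2(Y_f)^{n-1}$ into $\oy$-divergence-free fields with vanishing normal trace on $\Gamma$ and $\oy$-gradients of functions in $H^1_{\per,\nabla_{\oy}}(Y_f)$ then produces $\bar{v}_1\in L^2(\Omega, H^1_{\per,\nabla_{\oy}}(Y_f))$ with $\bar{\xi}_0-\nabla_{\ox}v_0=\nabla_{\oy}\bar{v}_1$. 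The $\Oea$-case follows by the same reasoning, without the $\Gea$-considerations.
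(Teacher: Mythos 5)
Your proof has the same skeleton as the paper's: vertical corrector via the extension operator and Proposition \ref{prop:compactness_v0_dn_v0}, $\nabla_{\oy}v_1 = 0$ from $\eps^\alpha\nabla_{\ox}\vea\toa 0$, and the horizontal limit via divergence-free microscopic test functions followed by a Helmholtz decomposition. Where you deviate is in the admissible class for the horizontal step. The paper tests against all $\phi\in C_0^\infty(\Omega,C_\per^\infty(Y))^{n-1}$ with $\nabla_{\oy}\cdot\phi=0$ (in particular, $\phi$ constant in $y$ to read off $v_0\in H^1(\Omega)$, and then general $\phi$ for the Helmholtz step). You add the constraint $\psi\cdot\bar\nu=0$ on $\Gamma$. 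That condition does exactly what you say: it annihilates the $\Gea$-surface term produced by the horizontal integration by parts over $\Oeaf$, a term the paper's proof passes over silently and which does not vanish in any obvious way without such a condition. In that sense your treatment is more careful than the paper's.

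The price of this refinement is a genuine gap you do not close: to conclude $\nabla_{\ox}v_0\in L^2(\Omega)^{n-1}$ you need admissible $\psi$ — periodic, $\nabla_{\oy}\cdot\psi=0$, $\psi\cdot\bar\nu=0$ on $\Gamma$, smooth on all of $Y$ — whose means $\int_{Y_f}\psi\,\dd y$ span $\R^{n-1}$. You invoke ``$\psi$ with prescribed nonzero mean'' as though this were free, but it is not: the $\Gamma$-condition rules out the constant field $e_i$, which is precisely what the paper uses to make the mean-spanning trivial. The existence can be salvaged (for instance by a contradiction argument: if every admissible $\psi$ had zero mean in direction $e_1$, the Helmholtz decomposition would force $e_1=\nabla_{\oy}\chi$ for some $\oy$-periodic $\chi$, impossible; alternatively one can look at the cell fluxes $e_i+\nabla_{\oy}\bar\chi_i$ built from \eqref{eq:cell_problem_diffusion}, which have means given by rows of the positive-definite block of $D^\ast$), but as written this step is missing. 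A second, smaller omission of the same flavor: your Helmholtz conclusion requires the restrictions to $Y_f$ of your admissible $\psi$'s to be dense in $L_{\sigma,\per}$, which also needs a sentence; the paper avoids this by testing against the larger class without the $\Gamma$-condition and identifying the potential by a de Rham--type argument on each $y_n$-slice of $Y$.
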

\begin{proof}
From the assumed a priori estimates on $\vea$, we get the existence of $v_0 \in L^2(\Omega \times Y)$ and $\xi_0 \in L^2(\Omega \times Y)^n$ (both vanishing in $Y_s$), such that up to a subsequence
\begin{align*}
     \chi_{\Oeaf}\vea \toa v_0, \qquad \chi_{\Oeaf} (\nabla_{\ox} \vea , \eps^{\alpha} \partial_n \vea) \toa \xi_0.
\end{align*}
Since we also have 
\begin{align*}
    \eps^{\alpha}\|\nabla \vea \|_{L^2(\Oeaf)} \le C\eps^{\frac{\alpha}{2}},
\end{align*}
we can apply Proposition \ref{prop:compactness_v0_dn_v0} (see also $\eqref{conv:aux_two_scale}$), to obtain $v_0(x,y) = \chi_{Y_f}(y) v_0(x)$ with $v_0 \in L^2(\Omega)$ such that $\partial_n v_0 \in L^2(\Omega)$, and $\xi_0^n = \chi_{Y_f}(\partial_n v_0 + \partial_{y_n} v_1 )$ for some $v_1 \in L^2(\Omega,H_{\per}^1(Y)/\R)$. Since also $\eps^{\alpha} \nabla_{\ox} \vea \toa 0$, we have $\nabla_{\oy} v_1 = 0$ in $Y_f$, and therefore $v_1$ is independent of $\oy$. Now, for all $\phi \in C_0^{\infty}(\Omega,C_{\per}^{\infty}(Y))^{n-1}$ with $\nabla_{\oy} \cdot \phi = 0$ we have (with $\bar{\xi}_0:= (\xi_0^1,\ldots,\xi_0^{n-1})$)
\begin{align*}
\int_{\Omega} \int_{Y_f} \bar{\xi}_0 \cdot \phi\dd y \dd x &= \lim_{\eps \to 0} \frac{1}{\eps^{\alpha}} \int_{\Oeaf} \nabla_{\ox} \vea \cdot \phi \left(\ox,\frac{x_n}{\eps^{\alpha}},\frac{x}{\eps}\right) \dd x
\\
&=- \lim_{\eps \to 0} \frac{1}{\eps^{\alpha}} \int_{\Oeaf} \vea \nabla_{\ox} \cdot \phi \left(\ox,\frac{x_n}{\eps^{\alpha}},\frac{x}{\eps}\right) \dd x = - \int_{\Omega} \int_{Y_f} v_0 \nabla_{\ox} \cdot \phi\dd y  \dd x.
\end{align*}
If we first choose $\phi $ constant with respect to $y$ and use that $v_0$ is independent of $y$, we get $v_0 \in H^1(\Omega)$. Then, with $\phi$ arbitrary (still $\nabla_{\oy} \cdot \phi = 0$), we get
\begin{align*}
    \int_{\Omega}\int_{Y_f} (\bar{\xi}_0 - \nabla_{\ox} v_0) \cdot \phi\dd y  \dd x = 0,
\end{align*}
which implies again by the Helmholtz decomposition below the existence of  $\bar{v}_1 \in L^2(\Omega \times Y_f)$ and $\nabla_{\oy} \bar{v}_1 \in L^2(\Omega \times Y_f)^{n-1}$ (unique up to an $L^2$-function only depending on $y_n$), such that $\bar{\xi}_0 = \nabla_{\ox} v_0 + \nabla_{\oy} \bar{v}_1$.

We define the space 
\begin{align*}
    L_{\sigma,\per}:= \left\{ u \in L^2(Y_f)^{n-1} \, : \, \int_{Y_f} u \cdot \nabla_{\oy} \phi\dd y  = 0 \,\, \mbox{for all } \phi \in C_{\per}^{\infty}(\overline{Y_f})   \right\}.
\end{align*}
Since $L_{\sigma,\per}$ is closed, we get $L^2(Y_f)^{n-1} = L_{\sigma,\per} \perp  L_{\sigma,\per}^{\perp} $. 
Obviously, we have $H^1_{\per,\nabla_{\oy}}(Y_f)\subset L_{\sigma,\per}^{\perp}$, since $C_{\per}^{\infty}(\overline{Y_f})$ is dense in $H^1_{\per,\nabla_{\oy}}(Y_f)$. Next, we define the quotient space $\widetilde{H}:= H^1_{\per,\nabla_{\oy}}(Y_f)/\ker(\nabla_{\oy})$. Now, for given $v \in L_{\sigma,\per} \subset L^2(Y_f)^{n-1}$, we consider the problem
\begin{align*}
    \widetilde{a}([p],[\phi]):= \int_{Y_f} \nabla_{\oy} p \cdot \nabla_{\oy} \phi\dd y  = \int_{Y_f} v \cdot \nabla_{\oy} \phi\dd y  =: l([\phi])
\end{align*}
for every $[p],[\phi] \in \widetilde{H}$ and $p\in [p], \, \phi \in [\phi]$. This problem is well-defined, since the kernel of $\nabla_{\oy}$ consists of $L^2$-functions only depending on $y_n$. By the Lax-Milgram Lemma we obtain the existence of a unique solution $[p]\in \tilde{H}$, and therefore the existence of $p \in H^1_{\per,\nabla_{\oy}}(Y_f)$ (unique up to a function depending on only on $y_n$), such that 
\begin{align*}
    \int_{Y_f} (\nabla_{\oy} p - v) \cdot \nabla_{\oy} \phi\dd y  = 0,
\end{align*}
for all $\phi \in H^1_{\per,\nabla_{\oy}}(Y_f)$ (in particular $\phi \in C_{\per}^{\infty}(\overline{Y_f})$). Hence, $\nabla_{\oy} p - v \in L_{\sigma,\per}$ and since $\nabla_{\oy}p,v\in L_{\sigma,\per}^{\perp}$, we obtain $\nabla_{\oy} p - v \in L_{\sigma,\per}\cap L_{\sigma,\per}^{\per} = \{0\}$, which implies 
\begin{align*}
    L^2(Y_f)^{n-1} = L_{\sigma,\per} \perp \nabla_{\oy}H^1_{\per,\nabla_{\oy}}(Y_f).
\end{align*}
This finishes the proof.
\end{proof}
In the following, we also identify the space $\{\phi \in H^1_{\per}(Y_f)\, : \,  \nabla_{\oy} \phi = 0\}$ with the space $H^1_{\per}(0,1)$. Finally, we consider the asymptotic expansion of $\vea$ which is justified by the previous compactness results. We make the ansatz
\begin{align*}
    \vea(x) = v_0\left(\ox,\frac{x_n}{\eps^{\alpha}},\frac{x}{\eps}\right) + \eps^{1-\alpha} v_1\left(\ox,\frac{x_n}{\eps{\alpha}},\frac{x}{\eps}\right) + \eps \bar{v}_1\left(\ox,\frac{x_n}{\eps{\alpha}},\frac{x}{\eps}\right) + \ldots.
\end{align*}
Hence, we obtain for our three compactness results the following expansions:
\begin{enumerate}[label = -]
    \item Proposition \ref{prop:compactness_gradient_order_eps}: Oscillations already occur in the lowest order term and we get:
    \begin{align*}
        \vea(x) = v_0\left(\ox,\frac{x_n}{\eps^{\alpha}},\frac{x}{\eps}\right).
    \end{align*}

    \item Proposition \ref{prop:compactness_v0_dn_v0}: No oscillations in the zeroth order term. Oscillations occur in the term of order $\eps^{1 - \alpha}$:
    \begin{align*}
        \vea(x) = v_0\left(\ox,\frac{x_n}{\eps^{\alpha}}\right) + \eps^{1-\alpha} v_1\left(\ox,\frac{x_n}{\eps^{\alpha}},\frac{x}{\eps}\right).
    \end{align*}

    \item Proposition \ref{prop:compactness_v0_nabla_v0}: In this case, the gradient in the horizontal direction is of the same order as the function itself, leading to a situation, when the corrector of order $\eps^{1-\alpha}$ is independent of the horizontal microscopic variable $\oy$, and an additional corrector of order $\eps$ is necessary:
    \begin{align*}
        \vea(x) = v_0\left(\ox,\frac{x_n}{\eps^{\alpha}}\right) + \eps^{1 - \alpha} v_1\left(\ox,\frac{x_n}{\eps^{\alpha}},\frac{x_n}{\eps}\right) + \eps \bar{v}_1\left(\ox,\frac{x_n}{\eps^{\alpha}},\frac{x}{\eps}\right).
    \end{align*}
\end{enumerate}

\begin{remark}
All the results can be generalized in an obvious way to the time-dependent case. More precisely, a sequence $\vea \in L^p((0,T),L^2(\Oea))$ with $p \in [1,\infty)$ converges weakly in the two-scale sense to a limit function $v_0 \in L^p((0,T),L^2(\Omega \times Y))$, if for all $\phi \in L^p((0,T),L^2(\Omega,C_{\per}^0(Y)))$ it holds that
 \begin{equation*}
            \lim_{\eps \to 0} \frac{1}{\eps^{\alpha}} \int_0^T \int_{\Oea} \vea(x) \cdot \phi\left(t,\ox,\xnea,\oxe,\xne\right) \dd x  \dd t =\int_0^T\int_{\Omega} \int_{Y} v_0(x,y) \cdot \phi(t,x,y) \dd y \dd x \dd t.
        \end{equation*}
Our compactness results are valid for $p\in (1,\infty)$. The strong two-scale convergence can be generalized in a straightforward way.  We use the same notation as in the time-independent case. It should be clear from the context, which regularity with respect to time can be obtained for the convergence.
\end{remark}

\section{The fluid problem}\label{sec:fluidproblem}

In this section we deal with the homogenization and dimension reduction of the microscopic Stokes problem $\eqref{eq:Stokes_micro_strong}$. We show uniform a priori estimates for the fluid velocity and the fluid pressure with respect to the parameter $\eps$ and $\alpha$. Using the general compactness results from Section \ref{sec:two_scale_compactness}, we get two-scale convergence of $\uea$ and $\pea$ to suitable limit functions, which allows us to pass to the limit in the microscopic problem, by choosing test-functions adapted to the structure of the limit function.

We start with the weak formulation for the microscopic problem and state the assumptions on the data: We say that$(\uea,\pea) \in H^1(\Oeaf,\partial_D\Oeaf \cup \Gea)^n \times L^2(\Oeaf)$ is a weak solution of $\eqref{eq:Stokes_micro_strong}$, if $\nabla \cdot \uea = 0$ and for all $\phiea \in H^1(\Oeaf,\partial_D \Oeaf \cup \Gea)^n$ it holds that 
\begin{equation*}\label{weakequ}
    \int_{\Oeaf} \nabla \uea \colon \nabla \phiea \dd x - \int_{\Oeaf} \pea \nabla \cdot \phiea \dd x = \int_{\Oeaf} \fea \cdot \phiea \dd x -\int_{\Sea^{\pm}} \pea^b\nu \cdot \phiea \dd \sigma .
\end{equation*}
Under the assumption that $\pea^b \in H^1(\Oeaf)$ (see below for the assumptions on the data), we can use the divergence theorem in the last term on the right-hand side to obtain
\begin{align*}
 \int_{\Oeaf} \nabla \uea \colon \nabla \phiea \dd x - \int_{\Oeaf} (\pea - \pea^b) \nabla \cdot \phiea \dd x = \int_{\Oeaf} (\fea - \nabla \pea^b) \cdot \phiea \dd x.
\end{align*}

\noindent\textbf{Assumptions on the data:}
\begin{enumerate}[label = (S\arabic*)]
\item\label{ass:Stokes_f} For the volume force $\fea \in L^2(\Oeaf)^n$, we assume 
\begin{equation*}
    \Lpnorm{\fea}{2}{\Oeaf} \leq C\eps^{\alpha/2}.
\end{equation*}

\item\label{ass:Stokes_pb} For the boundary pressure, we assume $\pea^b \in H^1(\Oea^f)$ such that
\begin{align*}
    \|\pea^b\|_{L^2(\Oeaf)} + \eps^{\alpha} \|\nabla \pea^b\|_{L^2(\Oeaf)} \le C\eps^{\frac{3\alpha}{2}}.
\end{align*}
Further, there exists $p_0^b \in L^2(\Omega)$ with $\partial_n p_0^b \in  L^2(\Omega)$ such that $\eps^{-\alpha}\pea^b \toa p_0^b$ and $p_1^b \in L^2(\Omega,H^1_\per(Y)/\R)$ such that $\nabla \pea^b \toa \partial_n p_0^b e_n + \nabla p_1^b$ (see Section \ref{sec:two_scale_compactness} for the definition of the two-scale convergence).
\end{enumerate}

\begin{corollary}
The problem $\eqref{eq:Stokes_micro_strong}$ omits a unique weak solution.
\end{corollary}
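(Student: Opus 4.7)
The plan is to treat the weak formulation as a Stokes saddle-point problem and proceed via the classical Lax--Milgram / De Rham route. After invoking the integration-by-parts reduction already carried out in the excerpt (legitimate because assumption \ref{ass:Stokes_pb} gives $\pea^b \in H^1(\Oeaf)$), the problem is equivalent to finding $(\uea, q) \in H^1(\Oeaf, \partial_D \Oeaf \cup \Gea)^n \times L^2(\Oeaf)$ with $\nabla \cdot \uea = 0$ and
\begin{align*}
\int_{\Oeaf} \nabla \uea : \nabla \phi \dd x - \int_{\Oeaf} q \, \nabla \cdot \phi \dd x = \int_{\Oeaf} (\fea - \nabla \pea^b) \cdot \phi \dd x
\end{align*}
for every admissible $\phi$, and then setting $\pea := q + \pea^b$.

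First, I would introduce the closed subspace
\begin{align*}
V := \bigl\{ \phi \in H^1(\Oeaf, \partial_D \Oeaf \cup \Gea)^n \, : \, \nabla \cdot \phi = 0 \bigr\}
\end{align*}
and the bilinear form $a(u,v) := \int_{\Oeaf} \nabla u : \nabla v \dd x$. Since every $\phi \in V$ vanishes on the full lateral boundary $\partial_D \Oeaf$ (which has positive surface measure for fixed $\eps>0$), the standard Poincar\'e inequality holds on $H^1(\Oeaf, \partial_D\Oeaf \cup \Gea)$, with some $\eps$-dependent constant (uniformity is not needed here, only existence). Coercivity and continuity of $a$ on $V$, together with boundedness of the right-hand side by assumptions \ref{ass:Stokes_f}--\ref{ass:Stokes_pb}, allow Lax--Milgram to produce a unique $\uea \in V$.

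To recover the pressure, consider the linear functional
\begin{align*}
L(\phi) := a(\uea, \phi) - \int_{\Oeaf} (\fea - \nabla \pea^b) \cdot \phi \dd x, \qquad \phi \in H^1(\Oeaf, \partial_D \Oeaf \cup \Gea)^n,
\end{align*}
which is continuous and vanishes on $V$. By the classical De Rham theorem on the bounded Lipschitz domain $\Oeaf$ --- equivalently, by the surjectivity of $\nabla \cdot \colon H^1(\Oeaf, \partial_D \Oeaf \cup \Gea)^n \to L^2(\Oeaf)$, which follows from a Bogovskii lifting in the mixed-boundary setting where the free part $\Sea^\pm$ is non-empty (so no mean-value constraint is imposed on test velocities) --- there exists a unique $q \in L^2(\Oeaf)$ with $L(\phi) = \int_{\Oeaf} q\, \nabla \cdot \phi \dd x$ for all such $\phi$. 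Setting $\pea := q + \pea^b \in L^2(\Oeaf)$ and reversing the integration by parts yields the original variational equation, and uniqueness of both $\uea$ and $q$ gives uniqueness of the pair $(\uea,\pea)$.

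The only non-trivial ingredient is the inf-sup / Bogovskii property used in the pressure reconstruction; however, at fixed $\eps>0$ this is classical for bounded Lipschitz domains with a non-empty Neumann-type boundary piece, so no novel construction is required at this stage. The $\eps$-uniform, geometry-adapted version of the Bogovskii operator --- essential for the homogenization limit --- is the independent, more delicate construction carried out later in Section \ref{sec:fluidproblem}.
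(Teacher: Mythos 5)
Your proposal is correct and follows exactly the classical Lax--Milgram plus De Rham/Bogovskii route that the paper invokes when it writes ``for fixed $\eps$ this result is classical and we skip the proof.'' You also correctly flag the one genuinely relevant structural point, namely that the pressure boundary condition on $\Sea^{\pm}$ leaves the free boundary piece non-empty, so the divergence operator is surjective onto all of $L^2(\Oeaf)$ and no mean-zero normalization of the pressure is needed, which is consistent with the paper's choice $\pea \in L^2(\Oeaf)$.
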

\begin{proof}
For fixed $\eps$ this result is classical and we skip the proof.
\end{proof}

\subsection{A priori estimates for the microscopic solutions $\vea$ and $\pea$}
\label{sec:apriori_fluid}
We begin by deriving the estimates for the fluid velocity $\uea$. In order to do so, we introduce the following Poincar\'e inequality on the layer $\Oeaf$.
\begin{lemma}\label{lem:Poincare}
    Let $\vea \in H^1(\Oeaf,\Gea) $ then there exists a constant $C>0$ not depending on $\eps$ such that 
    \begin{equation*}
        \Lpnorm{\vea}{2}{\Oeaf}\leq C \eps \Lpnorm{\nabla \vea}{2}{\Oeaf}.
    \end{equation*}    
\end{lemma}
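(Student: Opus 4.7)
The plan is to derive the inequality by a cell-by-cell scaling argument, exploiting the fact that $\vea$ vanishes on every periodic copy of $\Gamma$ inside $\Oeaf$.

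First, I would establish the reference-cell inequality. Since $Y_f$ is an open, connected Lipschitz subdomain of $Y$ with $\Gamma = \operatorname{int}(\overline{Y_f} \cap \overline{Y_s}) \subset \partial Y_f$ having positive surface measure, the standard Poincaré inequality on Lipschitz domains for $H^1$-functions with vanishing trace on a portion of the boundary of positive measure yields a constant $C_0 > 0$ such that
\begin{equation*}
    \|v\|_{L^2(Y_f)} \leq C_0 \|\nabla v\|_{L^2(Y_f)} \quad \text{for all } v \in H^1(Y_f,\Gamma).
\end{equation*}

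Next, I would transfer this to each periodic cell inside $\Oeaf$. By the geometric construction in Section \ref{sec:micro_domain}, the assumptions $\eps^{-1} \in \N$ and $\eps^{\alpha}/\eps \in \N$ guarantee that $\Oeaf$ is the disjoint union (up to null sets) of the shifted cells $\eps(Y_f + k)$ for $k \in \Ke$, with no cell being cut by $\partial \Oea$, and with $\Gea$ being the union of the images $\eps(\Gamma + k)$. For $\vea \in H^1(\Oeaf,\Gea)$ and fixed $k \in \Ke$, the rescaled function $v(y) \coloneqq \vea(\eps(y+k))$ lies in $H^1(Y_f,\Gamma)$. Applying the reference inequality and transforming back via the change of variables $x = \eps(y+k)$ gives
\begin{equation*}
    \|\vea\|_{L^2(\eps(Y_f+k))}^2 \leq C_0^2 \, \eps^2 \, \|\nabla \vea\|_{L^2(\eps(Y_f+k))}^2.
\end{equation*}

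Finally, I would sum this estimate over all $k \in \Ke$. Since the cells $\eps(Y_f+k)$ cover $\Oeaf$ in a pairwise disjoint fashion, summation directly yields
\begin{equation*}
    \|\vea\|_{L^2(\Oeaf)}^2 \leq C_0^2 \, \eps^2 \, \|\nabla \vea\|_{L^2(\Oeaf)}^2,
\end{equation*}
which is the claim with $C = C_0$. There is no real obstacle here: the only delicate point is verifying that the microscopic geometry genuinely decomposes into full reference cells (so that the trace condition on $\Gamma$ transfers cleanly and no boundary cells require a separate treatment), but this is exactly guaranteed by the divisibility assumptions on $\eps$ in Section \ref{sec:micro_domain}.
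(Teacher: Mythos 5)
Your proof is correct and is exactly the decomposition-plus-rescaling argument the paper has in mind (the paper's own proof is the one-liner ``decompose $\Oeaf$ into reference cells and then apply the Poincaré inequality''). Your version spells out the reference-cell Poincaré inequality with vanishing trace on $\Gamma$, the change-of-variables scaling that produces the factor $\eps^2$, and the summation over $k \in \Ke$, all of which check out.
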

\begin{proof}
    The proof is elementary and follows by decomposing $\Oeaf$ into reference cells and then applying the Poincar\'e inequality.
\end{proof}
\noindent We are now ready to derive the estimate for the fluid velocity.
\begin{proposition}\label{prop:apriori_fluid_velocity}
    Let $\uea \in H^1(\Oeaf,\partial_D \Oeaf \cup \Gea)^n$ be the weak solution of the Stokes problem $\eqref{eq:Stokes_micro_strong}$. Then it holds 
    \begin{equation*}
        \eps^{-2} \Lpnorm{\uea}{2}{\Oeaf} + \eps^{-1} \Lpnorm{\nabla \uea}{2}{\Oeaf} \leq  C\eps^{\alpha/2}. 
    \end{equation*} 
\end{proposition}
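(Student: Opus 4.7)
The plan is to test the weak variational formulation with $\phiea = \uea$ itself, exploit the divergence-free constraint to eliminate the pressure term, and then use the Poincar\'e inequality from Lemma \ref{lem:Poincare} to close the estimate. The crucial observation is that the alternative form of the weak formulation already derived in the excerpt, namely
\begin{equation*}
\int_{\Oeaf} \nabla \uea : \nabla \phiea \dd x - \int_{\Oeaf}(\pea - \pea^b)\nabla\cdot \phiea \dd x = \int_{\Oeaf}(\fea - \nabla \pea^b)\cdot \phiea \dd x,
\end{equation*}
is the convenient one, because the pressure boundary data has been converted into a bulk term $\nabla \pea^b$ whose $L^2$-norm is directly controlled by assumption \ref{ass:Stokes_pb}.

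Choosing $\phiea = \uea \in H^1(\Oeaf,\partial_D \Oeaf \cup \Gea)^n$, which is admissible and for which $\nabla \cdot \uea = 0$, the pressure term drops out and I obtain
\begin{equation*}
\Lpnorm{\nabla \uea}{2}{\Oeaf}^2 = \int_{\Oeaf} (\fea - \nabla \pea^b) \cdot \uea \dd x \leq \bigl(\Lpnorm{\fea}{2}{\Oeaf} + \Lpnorm{\nabla \pea^b}{2}{\Oeaf}\bigr) \Lpnorm{\uea}{2}{\Oeaf}.
\end{equation*}
Applying Lemma \ref{lem:Poincare} to bound $\Lpnorm{\uea}{2}{\Oeaf} \leq C \eps \Lpnorm{\nabla \uea}{2}{\Oeaf}$ and dividing through by $\Lpnorm{\nabla \uea}{2}{\Oeaf}$, I get
\begin{equation*}
\Lpnorm{\nabla \uea}{2}{\Oeaf} \leq C \eps \bigl(\Lpnorm{\fea}{2}{\Oeaf} + \Lpnorm{\nabla \pea^b}{2}{\Oeaf}\bigr).
\end{equation*}
Assumption \ref{ass:Stokes_f} gives $\Lpnorm{\fea}{2}{\Oeaf} \leq C \eps^{\alpha/2}$, while assumption \ref{ass:Stokes_pb} gives $\Lpnorm{\nabla \pea^b}{2}{\Oeaf} \leq C \eps^{\alpha/2}$ (rearranging $\eps^{\alpha}\Lpnorm{\nabla \pea^b}{2}{\Oeaf} \leq C \eps^{3\alpha/2}$). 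Combining these yields $\Lpnorm{\nabla \uea}{2}{\Oeaf} \leq C \eps^{1 + \alpha/2}$, which is precisely $\eps^{-1}\Lpnorm{\nabla \uea}{2}{\Oeaf} \leq C \eps^{\alpha/2}$. A final application of Lemma \ref{lem:Poincare} gives $\Lpnorm{\uea}{2}{\Oeaf} \leq C \eps \cdot \eps^{1+\alpha/2} = C \eps^{2 + \alpha/2}$, completing the proof.

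There is no serious obstacle here; the argument is a textbook energy estimate. The only subtlety worth noting is that the scaling factor $\eps$ in the Poincar\'e inequality originates from the $\eps$-periodic perforations (each reference cell has diameter $\eps$), which is why testing against $\uea$ directly, combined with the no-slip condition on $\Gea$, produces the $\eps$-gain that upgrades one Poincar\'e factor into the overall $\eps^{1+\alpha/2}$ scaling for $\nabla \uea$ and $\eps^{2+\alpha/2}$ for $\uea$. The real work in the section that follows will be the pressure estimate, since there the divergence constraint can no longer be exploited trivially and a Bogovski\u\i-type construction adapted to the thin perforated layer is required.
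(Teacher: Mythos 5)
Your proof is correct and follows exactly the same route as the paper: test with $\uea$ to kill the pressure term via $\nabla\cdot\uea=0$, bound the right-hand side by $(\|\fea\|_{L^2}+\|\nabla\pea^b\|_{L^2})\|\uea\|_{L^2}$, and close with two applications of the Poincaré inequality from Lemma \ref{lem:Poincare}, reading the $\eps^{\alpha/2}$ bound on $\|\nabla\pea^b\|_{L^2}$ off assumption \ref{ass:Stokes_pb}. No meaningful difference from the paper's argument.
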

\begin{proof}
    We test the weak formulation (\ref{weakequ}) with the solution $\uea$ to obtain 
    (the $(\pea-\pea^b)$ term vanishes since $\nabla \cdot \uea = 0$)
    \begin{align*}
        \Lpnorm{\nabla \uea}{2}{\Oeaf}^2 
        &\leq \Lpnorm{\uea}{2}{\Oeaf} \Lpnorm{\fea}{2}{\Oeaf} + \Lpnorm{\uea}{2}{\Oeaf} \Lpnorm{\nabla \pea^b}{2}{\Oeaf} \\
        &\leq C \eps^{\alpha/2+1} \Lpnorm{\nabla \uea}{2}{\Oeaf}.
    \end{align*}
    Hence, with the Poincar\'e inequality from Lemma \ref{lem:Poincare} we achieve
    \begin{equation*}
        \Lpnorm{\uea}{2}{\Oeaf} + \eps \Lpnorm{\nabla \uea}{2}{\Oeaf} \leq C\eps^{\alpha/2 + 2}. 
    \end{equation*}
\end{proof}
The estimate, of the fluid pressure $\pea$ is less elementary. The goal is to construct and estimate a Bogovskii operator on the thin perforated layer $\Oeaf$, in order to obtain test-functions $\phiea$ such that $\nabla \cdot \phiea = \pea$ in $\Oeaf$. We begin by establishing the Bogovskii operator on the whole layer $\Oea$. We use the same techniques as in \cite[Lemma~5,Step~4]{GJN}, now adapted to the layer of thickness $\eps^{\alpha}$.

\begin{proposition}\label{Bogovskiiwholelayer}
    For all $\fea \in L^2(\Oea)$ there exists $\psiea \in H^1(\Oea,\partial_D \Oea)^n$ such that 
    \begin{equation*}
        \nabla \cdot \psiea = \fea \quad \text{in} \; \Oea
    \end{equation*}
    and 
    \begin{equation*}
        \Lpnorm{\psiea}{2}{\Oea} + \eps^{\alpha} \Lpnorm{\nabla \psiea}{2}{\Oea} \leq C \eps^{\alpha} \Lpnorm{\fea}{2}{\Oea}.
    \end{equation*}
\end{proposition}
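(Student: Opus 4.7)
The plan is to adapt the construction of a Bogovskii operator for thin layers from \cite[Lemma~5,Step~4]{GJN}, where the thickness is of order $\eps$, to the present setting of thickness of order $\eps^{\alpha}$. The starting point is the classical Bogovskii operator on a reference unit cube, which by simple rescaling yields, on a cube $Q$ of side length $\eps^{\alpha}$, a bounded right-inverse of the divergence from $L^2_0(Q)$ into $H^1_0(Q)^n$ with the scalings $\Lpnorm{\psi}{2}{Q}\leq C\eps^{\alpha}\Lpnorm{f}{2}{Q}$ and $\Lpnorm{\nabla\psi}{2}{Q}\leq C\Lpnorm{f}{2}{Q}$ for every $f\in L^2_0(Q)$.

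Using that $\eps^{\alpha}/\eps\in\N$ and that $\Sigma$ has integer side lengths, the layer $\Oea$ can be decomposed, up to nullsets, into cubes $Q_k$ of side length $\eps^{\alpha}$. Setting $\bar f_k := |Q_k|^{-1}\int_{Q_k}\fea\dd x$ and $g := \sum_k \bar f_k\, \chi_{Q_k}$, I would first apply the rescaled local Bogovskii on each $Q_k$ to $\fea-\bar f_k$ (which has zero mean on $Q_k$), and concatenate the resulting fields (extended by zero outside $Q_k$) into a function $\psi^{(1)}\in H_0^1(\Oea)^n$ satisfying $\nabla\cdot\psi^{(1)}=\fea-g$ in $\Oea$. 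Summing the local estimates over $k$ yields $\Lpnorm{\psi^{(1)}}{2}{\Oea}+\eps^{\alpha}\Lpnorm{\nabla\psi^{(1)}}{2}{\Oea}\leq C\eps^{\alpha}\Lpnorm{\fea}{2}{\Oea}$.

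It remains to construct $\psi^{(2)}\in H^1(\Oea,\partial_D\Oea)^n$ with $\nabla\cdot\psi^{(2)}=g$ in $\Oea$ and the analogous scaling, and this is the main obstacle. Since $g$ is piecewise constant in $\ox$ and independent of $x_n$, and since no boundary condition is imposed on $\psi^{(2)}$ at $\Sigma\times\{\pm\eps^{\alpha}\}$, the total mass $\bar f_k|Q_k|$ of each cube can be exported through the top and bottom faces; this is precisely the mechanism that enables the prefactor $\eps^{\alpha}$ in the $L^2$-bound. A naive reduction to a lower-dimensional Bogovskii operator on $\Sigma$ does not suffice, since it fails to exploit the thin-layer structure and would only yield $\Lpnorm{\psi^{(2)}}{2}{\Oea}\leq C\Lpnorm{g}{2}{\Oea}$, losing the factor $\eps^{\alpha}$. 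Following \cite{GJN}, one instead routes mass predominantly in the vertical direction within each cube and matches contributions across interior cube boundaries via a hierarchical construction preserving $H^1$-regularity, giving $\Lpnorm{\psi^{(2)}}{2}{\Oea}+\eps^{\alpha}\Lpnorm{\nabla\psi^{(2)}}{2}{\Oea}\leq C\eps^{\alpha}\Lpnorm{g}{2}{\Oea}\leq C\eps^{\alpha}\Lpnorm{\fea}{2}{\Oea}$, where the last inequality uses that $g$ is the $L^2$-projection of $\fea$ onto the space of functions constant on each $Q_k$. Setting $\psiea:=\psi^{(1)}+\psi^{(2)}$ then yields the claim.
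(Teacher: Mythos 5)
Your cube decomposition and the heuristic about exporting mass through the top and bottom faces are correct and do capture the mechanism that makes the statement hold for $\fea\in L^2(\Oea)$ without a mean-zero constraint. However, your proof as written has a genuine gap exactly where you flag "the main obstacle": the construction of $\psi^{(2)}$ handling the piecewise-constant part $g$ is never carried out. You refer to "routing mass predominantly in the vertical direction" and a "hierarchical construction matching contributions across interior cube boundaries," but no such construction is given, and it is not clear what boundedness it would deliver. Since this is the step that actually produces the crucial prefactor $\eps^{\alpha}$ in the $L^2$-bound, asserting it by reference does not constitute a proof.

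Moreover, the splitting $\fea=(\fea-g)+g$ is an unnecessary detour. The paper's proof avoids it entirely: on the reference cell $Y=(0,1)^n$ one has a Bogovskii-type right inverse of the divergence with values in $H^1(Y,\partial Y\setminus S^{\pm})^n$, i.e. with zero trace only on the lateral boundary $\partial Y\setminus S^{\pm}$. Because the top and bottom faces carry no boundary condition, this operator is defined for \emph{arbitrary} $f\in L^2(Y)$ (no zero-mean requirement), with $\|\psi\|_{H^1(Y)}\le C\|f\|_{L^2(Y)}$. One then rescales each cell $\eps^{\alpha}(Y+k)$ back to $Y$, applies this operator, rescales forward and concatenates; the pieces match across lateral cell faces because they vanish there, and the Poincar\'e inequality (using the zero lateral traces, on cells of size $\eps^{\alpha}$) yields $\|\psiea\|_{L^2(\Oea)}\le C\eps^{\alpha}\|\nabla\psiea\|_{L^2(\Oea)}$, giving the claimed estimate in one pass. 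If you want to repair your argument, either spell out explicitly how $\psi^{(2)}$ is built (e.g.\ a vertical "lift" $\psi^{(2)}_n$ linear in $x_n$ localized by a horizontal bump function per cube, followed by a correction with zero mean, together with the resulting estimates), or, more economically, discard the mean-splitting altogether and invoke the partial-boundary-condition Bogovskii on the reference cell as above.
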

\begin{proof}
    We set 
    \begin{equation*}
        \tKea \coloneqq \left\{ k \in \Z^{n-1} \times \{0\} \; \colon \; \eps^{\alpha}(Y+k) \subset \Oea \right\}.
    \end{equation*}
    Since $\eps^{\alpha}/\eps \in \N$, we have 
    \begin{equation*}
        \Oea = \mathrm{int}\left( \bigcup_{k \in \tKea} \eps^{\alpha}(Y+k) \right).
    \end{equation*}
    For $k \in \tKea$ and $f \in L^2(\Oea)$, we define
    \begin{equation*}
        \feak \colon Y \to \R, \quad \feak = \fea(\eps^{\alpha}(x+k)).
    \end{equation*}
    With the use of the Bogovskii operator, we obtain $\psieak \in H^1(Y,\partial Y \setminus S^{\pm})^n$ with $S^+ := (0,1)^{n-1} \times \{1\}$ and $S^- := (0,1)^{n-1} \times \{0\}$, such that  
    \begin{equation*}
        \nabla \cdot \psieak = \feak, \quad \Hknorm{\psieak}{1}{Y} \leq C \Lpnorm{\feak}{2}{Y}.
    \end{equation*} 
    Now, we define 
    \begin{equation*}
        \psiea \colon \Oea \to \R^n, \quad \psiea(x) \coloneqq \eps^{\alpha}\psieak\left(\frac{x}{\eps^{\alpha}}-k\right) \quad \text{for}\; x \in \eps^{\alpha}(Y+k).
    \end{equation*}
    For $\psiea$, we have 
    \begin{equation*}
        \nabla \cdot \psiea = \fea \quad \text{in} \; \Oea
    \end{equation*}
    and
    \begin{align*}
        \Lpnorm{\nabla \psiea}{2}{\Oea}^2
        &= \sum_{k \in \tKea} \int_{\eps^{\alpha}(Y+k)} \abs{\nabla \psiea^k\left(\frac{x}{\eps^\alpha} -k \right)}^2 \dd x= \sum_{k \in \tKea} \eps^{n\alpha} \int_{Y} \abs{\nabla \psiea^k}^2 \dd y \\
        &\leq C \sum_{k \in \tKea} \eps^{n\alpha} \int_Y \abs{\feak}^2 \dd y 
        = C \Lpnorm{\fea}{2}{\Oea}.
    \end{align*}
    Hence, we obtain  with the Poincar\'e inequality
    \begin{align*}
        \|\psiea\|_{L^2(\Oea)} \le C\eps^{\alpha} \|\nabla \psiea\|_{L^2(\Oea)},
    \end{align*}
    which can be obtained in the same way as the inequality in Lemma \ref{lem:Poincare} by replacing $\eps $ with $\eps^{\alpha}$ (we use the zero boundary conditions of $\psiea$ on the lateral boundary of $\eps^{\alpha}(Y + k)$):
    \begin{equation*}
        \Lpnorm{\psiea}{2}{\Oea} + \eps^{\alpha} \Lpnorm{\nabla \psiea}{2}{\Oea} \leq \eps^{\alpha}\Lpnorm{\fea}{2}{\Oea}.
    \end{equation*}
    \end{proof}
    Now, we want to establish the Bogovskii operator on the perforated layer $\Oeaf$. This is done via the restiction operator introduced by Allaire in \cite[Theorem~2.3]{AllaireHomStokes}: 
    There exists an operator $\Re \colon H^1(\Oea, \partial_D \Oea)^n \to H^1(\Oeaf,\partial_D\Oeaf \cup \Gea)^n$ with 
    \begin{align*}
        \Re \uea = \uea \quad&\text{for all} \; \uea \in H^1(\Oea,\partial_D\Oea)^n \; \text{with} \; \uea = 0 \; \text{in} \; \Oeas, \\
        \nabla \cdot \Re \uea = \nabla \cdot \uea \quad&\text{for all} \; \uea \in H^1(\Oea,\partial_D\Oea)^n \; \text{with} \; \nabla \cdot \uea = 0 \; \text{in} \; \Oeas, \\
    \end{align*}
    and 
    \begin{equation*}
        \Lpnorm{\Re \uea}{2}{\Oeaf} + \eps \Lpnorm{\nabla \Re \uea}{2}{\Oeaf} \leq C \left( \Lpnorm{\uea}{2}{\Oeaf} + \eps\Lpnorm{\uea}{2}{\Oeaf}\right)
    \end{equation*}
    for all $\uea \in H^1(\Oea,\partial_D \Oea)^n$. Using this operator, we can restrict the Bogovskii operator, constructed in Proposition \ref{Bogovskiiwholelayer}, to the perforated layer $\Oeaf$.
    \begin{corollary}\label{cor:Bogovskii_perforated_layer}
        For $\fea \in L^2(\Oeaf)$  there exists $\phiea \in H^1(\Oeaf, \partial_D \Oeaf \cup \Gea)^n$ such that 
        \begin{equation*}
            \nabla \cdot \phiea = \fea \quad \text{in} \; \Oeaf
        \end{equation*}
        and 
        \begin{equation*}
            \Lpnorm{\phiea}{2}{\Oeaf} + \eps \Lpnorm{\nabla \phiea}{2}{\Oeaf} \leq C\eps^{\alpha} \Lpnorm{\fea}{2}{\Oeaf}.
        \end{equation*}
    \end{corollary}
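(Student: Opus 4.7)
The plan is to reduce the perforated-layer problem to the non-perforated one by composing the Bogovskii operator from Proposition~\ref{Bogovskiiwholelayer} with Allaire's restriction operator $\Re$, and then to check that the scaling of the two estimates combines correctly because $\alpha\in(0,1)$.

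First I would extend $\fea$ by zero from $\Oeaf$ to $\Oea$; call the extension $\tfea$, which satisfies $\|\tfea\|_{L^2(\Oea)}=\|\fea\|_{L^2(\Oeaf)}$ and vanishes identically on $\Oeas$. Applying Proposition~\ref{Bogovskiiwholelayer} to $\tfea$ yields a vector field $\psiea\in H^1(\Oea,\partial_D\Oea)^n$ with $\nabla\cdot\psiea=\tfea$ in $\Oea$ and
\begin{equation*}
    \Lpnorm{\psiea}{2}{\Oea}+\eps^{\alpha}\Lpnorm{\nabla\psiea}{2}{\Oea}\leq C\eps^{\alpha}\Lpnorm{\fea}{2}{\Oeaf}.
\end{equation*}
Since $\tfea\equiv 0$ on $\Oeas$, we have $\nabla\cdot\psiea=0$ in $\Oeas$, so the second preservation property of Allaire's restriction operator applies and I can define $\phiea\coloneqq\Re\psiea\in H^1(\Oeaf,\partial_D\Oeaf\cup\Gea)^n$. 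By construction $\nabla\cdot\phiea=\nabla\cdot\psiea=\tfea=\fea$ in $\Oeaf$, which is the desired divergence identity.

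For the estimate, the restriction operator gives
\begin{equation*}
    \Lpnorm{\phiea}{2}{\Oeaf}+\eps\Lpnorm{\nabla\phiea}{2}{\Oeaf}\leq C\bigl(\Lpnorm{\psiea}{2}{\Oea}+\eps\Lpnorm{\nabla\psiea}{2}{\Oea}\bigr).
\end{equation*}
The bound on $\|\psiea\|_{L^2(\Oea)}$ is already of order $\eps^{\alpha}\|\fea\|_{L^2(\Oeaf)}$, while the gradient bound from Proposition~\ref{Bogovskiiwholelayer} reads $\|\nabla\psiea\|_{L^2(\Oea)}\leq C\|\fea\|_{L^2(\Oeaf)}$, so the term $\eps\|\nabla\psiea\|_{L^2(\Oea)}$ is of order $\eps$. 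Here the key observation is that $\alpha\in(0,1)$ together with $\eps\ll 1$ forces $\eps\leq\eps^{\alpha}$, so this contribution is absorbed into a constant times $\eps^{\alpha}\|\fea\|_{L^2(\Oeaf)}$ as well.

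I do not expect a serious obstacle: the construction is essentially a composition, and the only thing to verify carefully is the interplay of scalings. The subtle point to watch is that the $\eps^{\alpha}$-scaling on the gradient in the whole-layer Bogovskii estimate (which reflects the cell-size $\eps^{\alpha}$ used in its construction) and the $\eps$-scaling from $\Re$ (which reflects the perforation scale) are compatible precisely because $\eps\leq\eps^{\alpha}$, so the coarser perforation scale $\eps$ governs the final $\nabla\phiea$-bound but is dominated by $\eps^{\alpha}$ on the right-hand side. This yields the claimed inequality and completes the proof.
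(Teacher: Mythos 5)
Your construction is correct and mirrors the paper's exactly: zero-extension of $\fea$, apply Proposition~\ref{Bogovskiiwholelayer}, compose with Allaire's restriction operator, and finally absorb the gradient contribution using $\eps\leq\eps^{\alpha}$ for $\alpha\in(0,1)$. The only cosmetic difference is at the last step: you invoke the full restriction estimate (including its $L^2$ part) to bound $\|\phiea\|_{L^2(\Oeaf)}$ directly, whereas the paper bounds only $\|\nabla\phiea\|_{L^2(\Oeaf)}$ via the restriction operator and then closes the $L^2$ bound with the Poincar\'e inequality of Lemma~\ref{lem:Poincare}; both routes yield the same estimate.
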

    \begin{proof}
        For $\fea \in L^2(\Oeaf)$ (extended to $\Oea$ by zero) there exists $\psiea \in H^1(\Oea,\partial_D \Oea)^n$ with 
        \begin{equation*}
            \nabla \cdot\psiea = \fea \quad \text{in} \; \Oea.
        \end{equation*}
        By setting $\phiea \coloneqq \Re \psiea $, we immediately obtain 
        \begin{equation*}
            \nabla \cdot \psiea = \fea \quad \text{in} \; \Oeaf.
        \end{equation*}
        Further, we obtain 
        \begin{align*}
            \Lpnorm{\nabla \phiea}{2}{\Oeaf} 
            &= \Lpnorm{\nabla\Re\psiea}{2}{\Oeaf} \\
            &\leq C\left(\eps^{-1} \Lpnorm{\psiea}{2}{\Oea} + \Lpnorm{\nabla \psiea}{2}{\Oea} \right) \\
            & \leq C \eps^{\alpha -1} \Lpnorm{\fea}{2}{\Oea}. 
        \end{align*}
        In total, we obtain  with the Poincar\'e inequality from Lemma \ref{lem:Poincare} 
        \begin{equation*}
            \Lpnorm{\phiea}{2}{\Oeaf} + \eps \Lpnorm{\nabla \phiea}{2}{\Oeaf} \leq C \eps^{\alpha} \Lpnorm{\fea}{2}{\Oeaf}.
        \end{equation*}
    \end{proof}

\begin{remark}\label{rem:pressure}
We emphasize that the result from  Corollary \ref{cor:Bogovskii_perforated_layer} is also valid for $\alpha = 1$. Further, we obtain the existence of a Bogovskii-operator $\Bea:  H^1(\Oeaf, \partial_D \Oeaf \cup \Gea)^n \rightarrow L^2(\Oeaf)$ with $\nabla \cdot \Bea(\fea) = \fea$ and 
    \begin{align*}
        \eps\|\nabla\Bea (\fea)\|_{L^2(\Oeaf)} \le C\eps^{\alpha} \|\fea\|_{L^2(\Oeaf)}.
    \end{align*}
    Here is a crucial difference between considering a pressure boundary condition and a no-slip boundary condition on $\Sea^{\pm}$. In the latter, see for example \cite{anguiano2018transition,fabricius2023homogenization}, the Bogovskii-operator, here denoted by $\Bea^0$, has to be defined on $H^1_0(\Oeaf)^n$ and only fulfills
    \begin{align*}
        \eps \|\nabla\Bea^0(\fea)\|_{L^2(\Oeaf)} \le C \|\fea\|_{L^2(\Oeaf)}.
    \end{align*}
    Hence, as can be seen in the following proposition, the pressure estimate can be improved to order $\eps^{\frac{3\alpha}{2}}$ (instead of order $\eps^{\frac{\alpha}{2}}$).
\end{remark}

    Now, we can prove the a priori estimate for the fluid pressure 
    \begin{proposition}\label{prop:apriori_fluid_pressure}
        Let $(\uea,\pea)\in H^1(\Oeaf,\partial_D \Oeaf \cup \Gea)^n \times L^2(\Oeaf)$ be the weak solution of the Stokes-problem. Then it holds 
        \begin{equation*}
            \Lpnorm{\pea}{2}{\Oeaf} \leq C\eps^{3\alpha/2}.
        \end{equation*}
    \end{proposition}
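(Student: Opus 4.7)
The plan is to use the Bogovskii operator from Corollary \ref{cor:Bogovskii_perforated_layer} to turn $\|\pea\|_{L^2(\Oeaf)}$ into a dual pairing against quantities whose scaling is already controlled. Concretely, I would set $\fea_{\mathrm{Bog}} := \pea - \pea^b \in L^2(\Oeaf)$ and let $\phiea \in H^1(\Oeaf, \partial_D \Oeaf \cup \Gea)^n$ be the test field produced by that operator, so that $\nabla \cdot \phiea = \pea - \pea^b$ in $\Oeaf$ together with
\begin{equation*}
\Lpnorm{\phiea}{2}{\Oeaf} + \eps \Lpnorm{\nabla \phiea}{2}{\Oeaf} \leq C \eps^{\alpha} \Lpnorm{\pea - \pea^b}{2}{\Oeaf}.
\end{equation*}
Since $\pea^b \in H^1(\Oeaf)$ by assumption \ref{ass:Stokes_pb}, the weak formulation may be used in its integrated-by-parts form, and plugging in this $\phiea$ gives the key identity
\begin{equation*}
\Lpnorm{\pea - \pea^b}{2}{\Oeaf}^2 = \int_{\Oeaf} \nabla \uea : \nabla \phiea \dd x - \int_{\Oeaf} (\fea - \nabla \pea^b) \cdot \phiea \dd x.
\end{equation*}

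Each of the three resulting terms would be estimated by Cauchy--Schwarz using the a priori velocity bound from Proposition \ref{prop:apriori_fluid_velocity} and the hypotheses \ref{ass:Stokes_f}, \ref{ass:Stokes_pb}, combined with the Bogovskii scaling above. The viscous term contributes $\Lpnorm{\nabla \uea}{2}{\Oeaf}\Lpnorm{\nabla \phiea}{2}{\Oeaf} \leq C\eps^{\alpha/2+1}\cdot \eps^{\alpha-1}\Lpnorm{\pea-\pea^b}{2}{\Oeaf} = C\eps^{3\alpha/2}\Lpnorm{\pea - \pea^b}{2}{\Oeaf}$; the volume force contributes $\Lpnorm{\fea}{2}{\Oeaf}\Lpnorm{\phiea}{2}{\Oeaf}\leq C\eps^{\alpha/2}\cdot \eps^\alpha \Lpnorm{\pea - \pea^b}{2}{\Oeaf}$; and the $\nabla \pea^b$ term is identical in structure, since \ref{ass:Stokes_pb} gives $\Lpnorm{\nabla \pea^b}{2}{\Oeaf} \leq C \eps^{\alpha/2}$. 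Absorbing one factor of $\Lpnorm{\pea - \pea^b}{2}{\Oeaf}$ yields $\Lpnorm{\pea - \pea^b}{2}{\Oeaf} \leq C\eps^{3\alpha/2}$, and the claim follows by the triangle inequality together with the bound $\Lpnorm{\pea^b}{2}{\Oeaf}\leq C\eps^{3\alpha/2}$ from \ref{ass:Stokes_pb}.

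The only genuinely delicate step—constructing a divergence-right-inverse on $\Oeaf$ whose operator norm carries the sharp prefactor $\eps^{\alpha}$ rather than the naive order $1$—has already been executed in Corollary \ref{cor:Bogovskii_perforated_layer} by composing the layer-scale Bogovskii operator of Proposition \ref{Bogovskiiwholelayer} with Allaire's restriction operator. As highlighted in Remark \ref{rem:pressure}, it is precisely this extra factor of $\eps^{\alpha}$, afforded by the fact that the pressure boundary condition on $\Sea^{\pm}$ allows $\phiea$ to be non-zero on the top and bottom, that upgrades the expected $\eps^{\alpha/2}$ scaling of the pressure in the no-slip case to the sharper $\eps^{3\alpha/2}$ bound asserted by the proposition.
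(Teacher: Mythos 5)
Your proof is correct and follows essentially the same route as the paper: testing the integrated-by-parts weak formulation with a Bogovskii field from Corollary \ref{cor:Bogovskii_perforated_layer}, then exploiting the $\eps^{\alpha}$ operator-norm gain together with Proposition \ref{prop:apriori_fluid_velocity} and assumptions \ref{ass:Stokes_f}--\ref{ass:Stokes_pb}. The only cosmetic difference is that you take $\nabla\cdot\phiea = \pea - \pea^b$ and finish with a triangle inequality, whereas the paper takes $\nabla\cdot\phiea = \pea$ and absorbs the resulting $\int \pea^b\pea$ term directly; both give the same scaling.
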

    \begin{proof}
        We test the weak formulation with $\phiea \in H^1(\Oeaf,\partial_D \cup \Gea)^n$ such that $\nabla \cdot\phiea= \pea$, obtained via Corollary \ref{cor:Bogovskii_perforated_layer}, and get 
        \begin{align*}
            \Lpnorm{\pea}{2}{\Oeaf}^2 
            \leq& \Lpnorm{\nabla \uea}{2}{\Oeaf} \Lpnorm{\nabla \phiea}{2}{\Oeaf} + \Lpnorm{\pea^b}{2}{\Oeaf} \Lpnorm{\pea}{2}{\Oeaf} \\
            &+ \Lpnorm{\fea}{2}{\Oeaf}\Lpnorm{\phiea}{2}{\Oeaf} + \Lpnorm{\nabla \pea^b}{2}{\Oeaf}\Lpnorm{\phiea}{2}{\Oeaf} \\
            \leq& C \left(\eps^{\alpha/2+1} \eps^{\alpha-1} + \eps^{3\alpha/2}+ \eps^{\alpha/2}\eps^{\alpha} + \eps^{\alpha/2}\eps^{\alpha} \right) \Lpnorm{\pea}{2}{\Oeaf} \\
            \leq& C \eps^{3\alpha/2}\Lpnorm{\pea}{2}{\Oeaf}. 
        \end{align*}
    \end{proof}
As mentioned in Remark \ref{rem:pressure} above, in the case of no-slip boundary conditions on $\Sea^+ \cup \Sea^-$, the pressure would be of order $\eps^{\frac{\alpha}{2}}$. However, we will see later that our order is somehow optimal and allows to pass to the limit $\eps\to 0$, while for the bound of order $\eps^{\frac{\alpha}{2}}$ causes trouble, see also \cite[Section 3.6.2]{buzanic2025poroelasticplatemodelobtained}.

\subsection{Two-scale compactness for the microscopic solutions $\vea$ and $\pea$}
\label{sec:compactness_fluid}

We use the uniform a priori estimates obtained in the previous section to show compactness results for the weak microscopic solution $(\uea,\pea)$ of $\eqref{eq:Stokes_micro_strong}$. Further, we establish suitable properties obtained from the divergence-free condition of $\uea$ and the zero boundary conditions on $\Gea$.

    \begin{proposition}\label{prop:compactness_micro_solution_fluid}
        The weak solution $(\uea,\pea)$ of the Stokes-problem satisfies 
        \begin{equation*}
            \eps^{-2} \uea \toa u_0, \quad \eps^{-1} \nabla \uea \toa \nabla_y u_0, \quad \eps^{-\alpha}  \pea \toa p_0
        \end{equation*}
        with $u_0 \in L^2(\Omega, H^1_\per(Y))^n$ and $p_0 \in L^2(\Omega)$. Additionally, we have $u_0 = 0$ in $\Omega \times (Y\setminus Y_f)$.
        Further, we have $\nabla_y \cdot u_0 = 0$ and for the Darcy-velocity
        \begin{align*}
            \ou(x):= \int_{Y_f} u_0\dd y  
        \end{align*}
        we have $\partial_n \ou^n  = 0$ and therefore $\ou^n$ is constant in $x_n$-direction.
    \end{proposition}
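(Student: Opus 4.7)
My strategy is to combine the $\eps$-uniform a priori estimates of Propositions~\ref{prop:apriori_fluid_velocity} and~\ref{prop:apriori_fluid_pressure} with the compactness results of Section~\ref{sec:two_scale_compactness}, and then to identify the structural properties of $u_0$ and $p_0$ by testing the Stokes variational identity and the divergence-free constraint against suitably chosen two-scale test functions.

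First, I extend $\uea$ by zero to $\Oea\setminus\Oeaf$. Since $\uea$ vanishes on $\Gea$, the extended function $E\uea$ lies in $H^1(\Oea)$ with $\nabla(E\uea) = \chi_{\Oeaf}\nabla\uea$, so by Proposition~\ref{prop:apriori_fluid_velocity} the function $\eps^{-2}E\uea$ satisfies the hypothesis of Proposition~\ref{prop:compactness_gradient_order_eps}. This yields $u_0\in L^2(\Omega, H^1_\per(Y))^n$ with $\eps^{-2}\uea\toa u_0$ and $\eps^{-1}\nabla\uea\toa\nabla_y u_0$, and the zero extension forces $u_0 = 0$ on $\Omega\times(Y\setminus Y_f)$. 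The pressure bound $\|\eps^{-\alpha}\pea\|_{L^2(\Oeaf)}\leq C\eps^{\alpha/2}$ combined with Lemma~\ref{lem:two_scale_compactness_basic} applied to the zero-extended $\eps^{-\alpha}\pea$ produces a two-scale limit $\tilde p_0 \in L^2(\Omega\times Y)$. The identity $\nabla\cdot(E\uea)=0$ on $\Oea$, together with the trace of $\eps^{-1}\nabla(E\uea)\toa\nabla_y u_0$ against $\psi\,I$ for scalar $\psi\in L^2(\Omega,C^\infty_\per(Y))$, immediately gives $\nabla_y\cdot u_0 = 0$.

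The heart of the proof is showing that $\tilde p_0$ is independent of $y$ on $Y_f$. I would choose $\phi\in C_0^\infty(\Omega, C^\infty_\per(Y))^n$ supported in $\Omega\times Y_f$ and test the weak formulation with $\phi_\eps(x):=\eps\,\phi(\ox, x_n/\eps^\alpha, \ox/\eps, x_n/\eps)$, which vanishes on $\Gea\cup\partial_D\Oeaf\cup\Sea^{\pm}$ and is thus admissible. The chain rule gives $\nabla\phi_\eps = \nabla_y\phi(\ldots) + O(\eps) + O(\eps^{1-\alpha})$ and $\|\phi_\eps\|_{L^2(\Oeaf)}\leq C\eps^{1+\alpha/2}$. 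Rescaling the variational identity by $\eps^{-2\alpha}$ and using the a priori bounds, the viscous, volume-force, and $\nabla\pea^b$ contributions are each $O(\eps^{1-\alpha})\to 0$; the dominant pressure term converges to $\int_{\Omega\times Y_f}\tilde p_0\,\nabla_y\cdot\phi\dd y\dd x$; and the boundary-pressure term converges to $\int_\Omega p_0^b\,\left(\int_{Y_f}\nabla_y\cdot\phi\dd y\right)\dd x$, which vanishes because $p_0^b$ is independent of $y$ and the $Y$-periodicity of $\phi$ together with its support in $Y_f$ forces $\int_{Y_f}\nabla_y\cdot\phi\dd y=0$ via the divergence theorem. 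Hence $\nabla_y\tilde p_0=0$ on $\Omega\times Y_f$, so $\tilde p_0 = p_0(x)$ with $p_0\in L^2(\Omega)$.

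Finally, for $\partial_n\ou^n=0$, I would test $\nabla\cdot\uea=0$ with a purely macroscopic $\phi\in C_0^\infty(\Omega)$ evaluated at $(\ox, x_n/\eps^\alpha)$; boundary contributions vanish by the zero trace of $\uea$ and the compact support of $\phi$, leaving $\int_{\Oeaf}\bar\uea\cdot\nabla_{\ox}\phi\dd x + \eps^{-\alpha}\int_{\Oeaf}\uea^n\partial_{x_n}\phi\dd x = 0$. Rescaling by $\eps^{-\alpha-2}$, the horizontal integral converges to a finite limit while the vertical one carries an extra factor $\eps^{-\alpha}$, forcing its two-scale leading part $\int_\Omega\ou^n\partial_{x_n}\phi\dd x$ to vanish, which is the weak form of $\partial_n\ou^n=0$. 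The principal difficulty is the pressure identification above: the Stokes pressure is of order $\eps^{3\alpha/2}$ (thanks to the pressure boundary condition, not the no-slip order $\eps^{\alpha/2}$), so the scaling balance among the viscous, pressure, and boundary-pressure contributions is tight, and the cancellation of the $\pea^b$ term hinges essentially on its two-scale limit $p_0^b$ being $y$-independent.
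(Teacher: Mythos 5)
Your proof is correct and follows essentially the same route as the paper: a priori bounds plus the general two-scale compactness lemmas give the three convergences and $u_0 = 0$ off $Y_f$; testing the divergence-free constraint gives $\nabla_y\cdot u_0 = 0$ and $\partial_n\ou^n = 0$; and the $y$-independence of the pressure is obtained by testing the weak formulation with a test function of net order $\eps^{1-2\alpha}$ supported in the fluid part, under which the viscous, force, and $\nabla\pea^b$ contributions are $O(\eps^{1-\alpha})$ while the pressure and boundary-pressure terms balance. Your handling of the boundary-pressure term (factoring $p_0^b$ out and using $\int_{Y_f}\nabla_y\cdot\phi = 0$) and the paper's (keeping $p_0-p_0^b$ together and noting $p_0^b$ is $y$-independent) are equivalent. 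The only non-cosmetic variation is in the $\partial_n\ou^n=0$ step: the paper takes the limit at the two-scale normalization so the horizontal term vanishes and the vertical converges directly, whereas you rescale by an extra $\eps^{-\alpha}$ and argue the vertical term's leading order must vanish for the sum to stay bounded; both are valid, the paper's being slightly more economical.
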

    
    \begin{proof}
        Due to the a priori estimates of the weak solution from Proposition \ref{prop:apriori_fluid_velocity} and \ref{prop:apriori_fluid_pressure}, as well as the two-scale compactness result from Proposition \ref{prop:compactness_gradient_order_eps}, we obtain the existence of $u_0 \in L^2(\Omega, H^1_\per(Y))^n$ and $p_0 \in L^2(\Omega \times Y)$ such that
        \begin{equation*}
            \eps^{-2} \uea \toa u_0, \quad \eps^{-1} \nabla \uea \toa \nabla_y u_0, \quad \eps^{-\alpha}  \pea \toa p_0.
        \end{equation*}
        Here, we extended $\uea$ and $\pea$ by zero to the whole reference cell $Y$.
        Now, let $\phi \in C^{\infty}_0(\Omega \times (Y\setminus Y_f))^n$ extended by zero to $Y$. Then, we have 
        \begin{equation*}
            0 = \frac{1}{\eps^\alpha} \int_{\Oea\setminus\Oeaf} \eps^{-2} \uea(x) \cdot \phi\left(\ox,\xnea,\oxe,\xne\right) \dd x \overset{\eps\to 0}{\rightarrow} \int_{\Omega} \int_{Y\setminus Y_f} u_0(x,y) \cdot \phi(x,y) \dd y \dd x.
        \end{equation*}
        Hence $u_0 = 0$ in $\Omega \times (Y\setminus Y_f)$. Now, we want to show $\nabla_y \cdot u_0 = 0$. For this, we choose $\phi \in C^{\infty}_0(\Omega,C^{\infty}_\per(Y_f))$. Then, we compute 
        \begin{align*}
            0 =& \frac{1}{\eps^\alpha} \int_{\Oeaf} \eps^{-1} \nabla \cdot \uea(x) \phi\left(\ox,\xnea,\xe\right) \dd x \\
            =& \frac{1}{\eps^\alpha} \int_{\Oeaf} \eps^{-1} \uea(x) \cdot \left[ \nablax \phi\left(\ox,\xnea,\xe\right) + e_n \eps^{-\alpha} \partial_{x_n}\phi\left(\ox,\xnea,\xe\right)  \right]\dd x \\
            &+\frac{1}{\eps^\alpha} \int_{\Oeaf} \eps^{-2} \uea(x) \cdot \nabla_y \phi\left(\ox,\xnea,\xe\right) \dd x \\
            \overset{\eps \to 0}{\rightarrow}& \int_{\Omega}\int_{Y_f} u_0(x,y) \cdot \nabla_y \phi(x,y) \dd y \dd x.
        \end{align*}
        In particular, it holds $\nabla_y \cdot u_0 = 0$. 
        Now, let $\phi \in C_0^{\infty}(\Omega)$. Then, we compute
        \begin{align*}
            0 
            &= -\int_{\Oeaf} \eps^{-2} \nabla \cdot \uea(x) \phi\left(\ox,\xnea\right) \dd x \\
            &=\sum_{i=1}^{n-1} \int_{\Oeaf} \eps^{-2} \uea^i(x) \partial_i \phi\left(\ox,\xnea\right) \dd x + \frac{1}{\eps^{\alpha}}\int_{\Oeaf} \eps^{-2} \uea^n(x) \partial_n \phi\left(\ox,\xnea\right) \dd x \\
            &\overset{\eps \to 0}{\rightarrow} \int_\Omega \int_{Y_f} u_0^n(x,y) \dd y \; \partial_n \phi(x) \dd x.
        \end{align*}
        This proves $\partial_n\ou^n = 0$ and therefore $\ou^n$  is constant in $x_n$-direction.
        Lastly, we want to show that $p_0$ is in fact independent of the 
        microscopic variable i.e. $p_0 \in L^2(\Omega)$. For that, we choose 
        \begin{equation*}
            \phiea(x) = \mathrm{diag}(\eps^{\beta},...,\eps^{\beta},\eps^{\gamma})\phi\left(\ox,\xnea,\xe\right)
        \end{equation*} 
        with $\phi \in C_0^{\infty}(\Omega,C_\per^{\infty}(Y_f))^n$ and, we compute
        \begin{align*}
            \int_{\Oeaf} \nabla &\uea \colon \nabla \phiea \dd x
            \\
            =& \sum_{i,j = 1}^{n-1} \int_{\Oeaf} \partial_i\uea^j(x) \left( \eps^{\beta} \partial_{x_i} \phi^j\left(\ox,\xnea,\xe\right) + \eps^{\beta-1} \partial_{y_i} \phi^j\left(\ox,\xnea,\xe\right)\right) \dd x \\
            &+ \sum_{i=1}^{n-1} \int_{\Oeaf} \partial_i \uea^n(x) \left( \eps^{\gamma} \partial_{x_i} \phi^n\left(\ox,\xnea,\xe\right) + \eps^{\gamma-1} \partial{y_i}\phi^n\left(\ox,\xnea,\xe\right)\right) \dd x \\
            &+ \sum_{j=1}^{n-1} \int_{\Oeaf} \partial_n \uea^j(x) \left( \eps^{\beta-\alpha} \partial_{x_n} \phi^j\left(\ox,\xnea,\xe\right) + \eps^{\beta-1} \partial_{y_n} \phi^j\left(\ox,\xnea,\xe\right) \right) \dd x\\
            &+ \int_{\Oeaf} \partial_n \uea^n(x) \left( \eps^{\gamma-\alpha} \partial_{x_n} \phi^n\left(\ox,\xnea,\xe\right) + \eps^{\gamma-1}\partial_{y_n} \phi^n\left(\ox,\xnea,\xe\right) \right)\dd x,
        \end{align*}

        \begin{align*}
            \int_{\Oeaf} &(\pea-\pea^b) \nabla \cdot \phiea \dd x 
            \\
            =& \sum_{i=1}^{n-1} \int_{\Oeaf} \left(\pea - \pea^b) \right) \left( \eps^{\beta} \partial_{x_i} \phi^i\left(\ox,\xnea,\xe\right) + \eps^{\beta-1} \partial_{y_i}\phi^i\left(\ox,\xnea,\xe\right) \right) \dd x\\
            &+ \int_{\Oeaf} \left(\pea - \pea^b\right) \left( \eps^{\gamma-\alpha} \partial_{x_n} \phi^n\left(\ox,\xnea,\xe\right) + \eps^{\gamma-1} \partial_{y_n} \phi^n\left(\ox,\xnea,\xe\right) \right) \dd x,
        \end{align*}

        \begin{align*}
            \int_{\Oeaf} (\fea-\nabla\pea^b)\cdot \phiea \dd x 
            =& \sum_{i=1}^{n-1} \int_{\Oeaf} (\fea^i(x)-\partial_i\pea^b(x))\eps^{\beta} \phi^i\left(\ox,\xnea,\xe\right) \dd x \\
            &+ \int_{\Oeaf} (\fea^n(x)-\partial_n\pea^b(x))\eps^{\gamma} \phi^n\left(\ox,\xnea,\xe\right) \dd x.
        \end{align*}
        By choosing $\beta = \gamma = 1-2\alpha$ and taking the limit $\eps \to 0$, we obtain
        \begin{equation*}
            0 = \int_{\Omega} \int_{Y} (p_0(x,y)-p_0^b(x)) \nabla_y \cdot \phi(x,y) \dd y \dd x \quad \text{for all} \; \phi \in C_0^{\infty}(\Omega \times Y)^n.
        \end{equation*}
          This immediately implies that $p_0$ is independent of the microscopic variable i.e. $p_0 \in L_0^2(\Omega)$, since $p_0^b \in L^2(\Omega)$.
    \end{proof}

    \subsection{Derivation of the macroscopic model}

    For the derivation of the macroscopic model, we proceed as in the proof of Proposition \ref{prop:compactness_micro_solution_fluid} with the exception of choosing $\beta = \gamma = -\alpha$ and $\phi \in C^{\infty}_0(\overline{\Omega}\setminus \partial_D\Omega,C_\per^{\infty}(Y_f))^n$ such that  $\nabla_y \cdot \phi = 0$.
    By doing so, we obtain in the limit $\eps \to 0$ 
    \begin{align}\label{MacroscopicLimit}
        \begin{aligned}
        \int_{\Omega} \int_{Y_f} \nabla_y u_0(x,y) \colon \nabla_y \phi(x,y) \dd y \dd x 
        &- \int_{\Omega} (p_0(x) - p_0^b(x)) \partial_{x_n} \int_{Y_f} \phi^n(x,y) \dd y \dd x \\
        & = \int_\Omega \int_{Y_f} (f_0(x)-e_n\partial_{x_n}p_0^b(x) - \nabla_y p_1^b(x,y)) \cdot \phi(x,y) \dd y \dd x
        \end{aligned}
    \end{align}
    for all $\phi \in C^{\infty}_0(\overline{\Omega}\setminus \partial_D\Omega,C_\per^{\infty}(Y_f))^n$ with $\nabla_y \cdot \phi = 0$. It is easy to see, that the term including $\nabla_y p_1^b$ vanishes via integration by parts. We now show, that $\partial_{x_n} p_0$ exists and is a $L^2(\Omega)$ function. In order to do this, we find $(w_i,q_i) \in H^1_{\per}(Y_f)^n \times L^2(Y_f)/\R$ solving, in the weak sense, the equation 
    \begin{equation}\label{cellproblem}
       \begin{array}{rll}
      -\Delta w_i + \nabla q_i &= e_i \quad &\text{in} \;Y_f,  \\
      \nabla \cdot w_i &= 0 \quad &\text{in} \; Y_f, \\
      w_i &= 0 \quad &\text{on} \; \Gamma, \\
      &w_i,q_i \; \text{are $Y$-periodic,}&
    \end{array} 
    \end{equation}
    for $i=1,...,n$.
    Existence and uniqueness of a solution is standard.
    We define the permeability tensor  
    \begin{equation}\label{permabilitytensor}
        K_{ij}= \int_{Y_f} \nabla w_i \colon \nabla w_j \dd y = \int_{Y_f} e_i \cdot w_j\dd y.
    \end{equation} 
     Further, we define the test-function $\phi \in C^{\infty}_0(\overline{\Omega}\setminus \partial_D\Omega,H_\per^{1}(Y_f))^n$ via
    \begin{equation*}
        \phi(x,y) \coloneqq K_{nn}^{-1}\eta(x)w(y),
    \end{equation*}
    with $\eta \in C^\infty_0(\overline{\Omega} \setminus \partial_D \Omega)$. We see that $\nabla_y \cdot \phi = 0$ and 
    \begin{equation*}
        \int_{Y_f} \phi^n \dd y = \eta.
    \end{equation*}
    Via a density argument, we can test equation  $\eqref{MacroscopicLimit}$ with $\phi$, and obtain
    \begin{align*}
        \abs{\int_{\Omega} (p_0(x)-p_0^b(x)) \partial_{x_n} \eta(x) \dd x }
        \leq& C(u_0,f_0,p_0^b,w)  \Lpnorm{\eta}{2}{\Omega},
    \end{align*}
    so in particular, that $\partial_{x_n} p_0 \in L^2(\Omega)$ and $p_0 = p_0^b$ on $S^{\pm}_1$. Hence, we can rewrite (\ref{MacroscopicLimit}) in
    \begin{align}
    \begin{aligned}\label{bogmacrolimit}
        \int_{\Omega}\int_{Y_f} &\nabla_y u_0(x,a) \colon \nabla_y \phi(x,y) \dd y \dd y 
        \\
        &+ \int_{\Omega}  \int_{Y_f} \partial_{x_n}p_0(x) \phi(x,y) \dd y \dd x - \int_{\Omega} \int_{Y_f} f_0(x) \cdot\phi(x,y) \dd y \dd x = 0,
    \end{aligned}
    \end{align}
    for all $\phi \in C^{\infty}_0(\Omega,C_\per^{\infty}(Y_f))^n$ with $\nabla_y \cdot \phi = 0$. Here the terms containing $p_0^b$ cancel each other other out via intergation by parts and due to the zero boundary condition of $\phi$.  Through the application of the Bogovskii operator, there exits $p_1 \in L^2(\Omega,L^2_0(Y_f))$ such that 
    \begin{align*}
         \int_{\Omega}\int_{Y_f} &\nabla_y u_0(x,a) \colon \nabla_y \phi(x,y) \dd y \dd y 
         \\
         &+ \int_{\Omega}  \int_{Y_f} \partial_n p_0(x) \phi(x,y) - p_1 \nabla_y \cdot \phi(x,y) \dd y \dd x = \int_{\Omega} \int_{Y_f} f_0(x) \cdot\phi(x,y) \dd y \dd x
    \end{align*}
    for all $\phi \in C^{\infty}_0(\Omega,C_\per^{\infty}(Y_f))^n$.  In other words, $(u_0,p_0,p_1)$ solves in the weak sense the equation
    \begin{equation*}
        -\Delta_y u_0 + e_n \partial_{x_n} p_0 + \nabla_y p_1 = f_0 \quad \text{in} \; \Omega \times Y_f.
    \end{equation*}
    We rewrite this equation into the form
    \begin{equation*}
         -\Delta_y u_0 +\nabla_y p_1 = \sum_{i=1}^{n-1} e_i f_0^i + e_n(f_0^n - \partial_{x_n}p_0) \quad \text{in} \; \Omega \times Y_f.
    \end{equation*}
    Together with $\nabla_y \cdot u_0 = 0$ in $\Omega \times Y_f$ and the boundary condition $u_0 = 0$ on $\Omega \times \Gamma$. Since the equation on the left-hand side is linear and the solution $(u_0,p_0,p_1)$ is unique, we obtain 
    \begin{align*}
        u_0(x,y) &= \sum_{i=1}^{n-1} f_0^iw_i + (f_0^n - \partial_{x_n}p_0)w_n, \\
        p_1(x,y) &=  \sum_{i=1}^{n-1} f_0^i q_i + (f_0^n - \partial_{x_n}p_0)q_n.
    \end{align*}
    where $(w_i,q_i) \in H^1_\per(Y_f)^n \times L^2(Y_f)/\R$ is again the unique solution of \eqref{cellproblem}. Now, we define the Darcy-velocity 
    \begin{equation*}
        \ou(x) \coloneqq \int_{Y_f} u_0(x,y) \dd y. 
    \end{equation*}
    For $j=1,...,n$, we obtain with the permeability tensor K, defined in (\ref{permabilitytensor}), 
    \begin{equation*}
        \ou_j = \sum_{i=1}^{n-1} f_0^i \int_{Y_f} w_i \cdot e_j \dd y  + (f_0^n - \partial_{x_n}p_0) \int_{Y_f} w_n \cdot e_j \dd y =  \sum_{i=1}^{n-1} K_{ij}f_0^i \dd y +  K_{n,j}(f_0^n - \partial_{x_n}p_0) 
    \end{equation*}
    and therefore
    \begin{equation}\label{eq:darcy_equation_velocity}
        \ou = K(f_0 - e_n \partial_{x_n} p_0) \quad \text{in} \; \Omega.
    \end{equation}
    Hence the tuple $(\ou,p_0)$ satisfies 
  \begin{align*}
    \bar{u} &= K(f_0- e_n\partial_{x_n} p_0)  &\mbox{ in }& \Omega,  \\
    \partial_{x_n} \bar{u}^n &= 0  &\mbox{ in }& \Omega.
  \end{align*}
In other words, the Darcy-pressure $p_0$ solves the equation
    \begin{equation*}
    \begin{array}{rcl}
        \partial_{x_n} \left[ K(f_0- e_n\partial_{x_n} p_0) \right]_n &= 0 \quad &\text{in} \;\Omega,\\
      p_0 &= p_0^b \quad &\text{on} \; S^{\pm}_1.
    \end{array}
    \end{equation*} 
It is obvious that this problem admits a unique weak solution, as well as the problem $\eqref{MacroscopicLimit}$. In particular, this implies that all convergence results are valid for the whole sequence, which completes the proof of Theorem \ref{thm:main_result_Stokes_flow}.

\subsection{The case of cylindrical inclusions $Y_s$}
\label{sec:cylindral_inclusions}
We comment on the case that the solid inclusions $Y_s$ are given as cylinders, more precisely, we have $Y_s = Y_s' \times (0,1)$ with $Y_s' \subset (0,1)^{n-1}$ strictly included. In the past, problems in this microscopic geometry received considerable attention in the literature, see for example  \cite{anguiano2018transition} and also \cite{fabricius2016darcy} for formal results. However, in both papers, a no-slip boundary condition on the top/bottom $\Sea^{\pm}$ on the thin layer was considered, which has significant influence on the macroscopic model. In this case, it is easy to check that $\nabla_{\ox} \cdot \ou = 0$ and further,  the microscopic pressure $\pea$ is of order $\eps^{\frac{\alpha}{2}}$. In particular, this implies (as can be seen from the calculations in the proof  of Proposition \ref{prop:compactness_micro_solution_fluid}, where now we can choose test-functions with $\partial_{y_n} \phi = \partial_{\ox} \phi = 0$ and $\nabla_{\oy} \cdot \phi = 0$, and therefore $\gamma $ and $\beta$ independent of each other) that the limit pressure is only depending on $\ox$ and fulfills $\nabla_{\ox} p_0 \in L^2(\Sigma)^{n-1}$. In the case of a pressure boundary condition on $\Sea^{\pm}$, such results seem to be not possible. However, we can simplify the representation for the Darcy-velocity in $\eqref{eq:darcy_equation_velocity}$ by considering in more detail the structure of $K$. 

It is easy to check (solve a similar equation on $Y_f'$) that the cell solutions $(w_i,q_i)$ for $i=1,\ldots,n-1$ are constant with respect to $y_n$ and we have $w_i^n = 0$. In particular, we get for $i=1,\ldots,n-1$ that
\begin{align*}
K_{in} = K_{ni} = \int_{Y_f} \nabla_y w_i : \nabla_y w_n\dd y  = \int_{Y_f} w_i^n\dd y  = 0,
\end{align*}
and $K$ has the block structure
\begin{align*}
    K = \begin{pmatrix}
        \bar{K} & 0 \\ 0 & K_{nn}
    \end{pmatrix},
\end{align*}
where $\bar{K}$ is the submatrix of $K$ consisting of the first $(n-1)$ columns and rows. This leads to ($\bar{f}_0  = (f_0^1,\ldots,f_0^{n-1})$) 
\begin{align*}
    \ou = (\bar{K} \bar{f}_0,0)^T + (0, K_{nn} (f_0^n - \partial_{x_n} p_0))^T.
\end{align*}
In particular, the horizontal part of the Darcy-velocity is just given by $\bar{K} \bar{f}_0$ and only the vertical velocity depends on the Darcy-pressure.

\section{The transport problem}
\label{sec:transport_problem}

Now, we deal with with the simultaneous homogenization and dimension reduction for a reaction-diffusion-advection equation $\eqref{eq:problem_transport_micro}$, where the advective velocity is obtained via the Stokes problem $\eqref{eq:Stokes_micro_strong}$, which we assume now to be quasi-stationary.   More precisely, we assume $f_{\eps,\alpha} \in L^{\infty}((0,T),L^2(\Oeaf))^n$ and $\pea^b \in L^{\infty}((0,T),H^1(\Oeaf))$ fulfilling the same estimates as in \ref{ass:Stokes_f} and \ref{ass:Stokes_pb}, with additional $L^{\infty}$-regularity with respect to time. This leads to the same a priori estimates for $\uea$ and $\pea$ as in Section \ref{sec:apriori_fluid}, with additional $L^{\infty}$-regularity in time. Further, the compactness results from Section \ref{sec:compactness_fluid} remain valid, where the limit function are $L^{\infty}$ with respect to time.  Further, we assume that $n \le 4$, to guarantee the existence of a weak microscopic solution.
For the diffusion coefficient $\Dea$ we consider the following cases with $D>0$ fixed (already given in Section \ref{sec:MainResults}):
\begin{enumerate}[label = (D\arabic*)]
    \item $\Dea =  \eps^{\alpha}D I \in \R^{n \times n}$, 

    \item $\Dea = D \mathrm{diag}(\eps^{-\alpha},\ldots , \eps^{-\alpha},  \eps^{\alpha} ) \in \R^{n\times n}$.
\end{enumerate}

Let us give the definition of a weak solution in the case \ref{case:diffusion_low}: We say that $\cea $ is a weak solution of the problem $\eqref{eq:problem_transport_micro}$ (for diffusion coefficient given in \ref{case:diffusion_low}) if $\cea \in L^2((0,T),H^1(\Oeaf)) $ with $\partial_t \cea \in L^2((0,T),H^1(\Oeaf, \Sea^+ \cup \Sea^-)')$  such that $\cea = \ceps^b$ on $\Sea^+ \cup \Sea^-$ and for all $\psiea \in H^1(\Oeaf)$ with $\psiea = 0 $ on $\Sea^+ \cup \Sea^-$  it holds almost everywhere in $(0,T)$ that
\begin{align}\label{eq:weak_transport_vea}
\frac{1}{\eps^{\alpha}}\langle \partial_t \cea , \psiea \rangle_{H^1(\Oeaf)} + \int_{\Oeaf}  \Dea\nabla \cea \cdot \nabla \psiea - \frac{\uea}{\eps^2} \cea \nabla \psiea \dd x = \frac{1}{\eps^{\alpha}}\int_{\Oeaf} g_{\eps,\alpha} \psiea \dd x, 
\end{align}
together with the initial condition $\cea(0) = 0$.
Introducing the quantity 
\begin{align*}
    \wea  := \cea - \ceps^b
\end{align*}
we obtain $\wea = 0$ on $\Sea^+ \cup \Sea^-$  and this function fulfills 
\begin{align}
\begin{aligned}\label{eq:weak_transport_wea}
    \frac{1}{\eps^{\alpha}}\langle \partial_t \wea , \psiea \rangle_{H^1(\Oeaf)}& + \int_{\Oeaf}  \Dea \nabla \wea \cdot \nabla \psiea - \frac{\uea}{\eps^2} \wea \nabla \psiea \dd x 
    \\
    &= \frac{1}{\eps^{\alpha}}\int_{\Oeaf} \left(g_{\eps,\alpha} -  \partial_t \ceps^b\right) \psiea \dd x + \int_{\Oeaf} \left(\Dea\nabla  \ceps^b - \frac{\uea}{\eps^2} \ceps^b \right) \cdot \nabla \psiea \dd x.
\end{aligned}
\end{align}
In the case of high diffusion in the horizontal direction \ref{case:diffusion_high}, we have to consider in the definition above $\cea \in L^2((0,T),H_{\#}^1(\Oeaf))$ and $\partial_t \cea \in L^2((0,T),H_{\#}^1(\Oeaf,\Sea^+ \cup \Sea^-)')$ and test-functions $\psiea \in H_{\#}^1(\Oeaf)$ (recall that $\#$ indicates the $\Sigma$-periodicity).

\noindent\textbf{Assumptions on the data:}
\begin{enumerate}[label = (T\arabic*)]
    \item The source term $g_{\eps,\alpha} \in L^{\infty}((0,T)\times \Oeaf)$ fulfills
\begin{align*}
\|g_{\eps,\alpha}\|_{L^{\infty}((0,T)\times \Oeaf)} \le C.
\end{align*}
Further, there exists $g_0 \in L^2((0,T)\times \Omega \times Y_f)$ such that $\chi_{\Oeaf} g_{\eps,\alpha} \toa g_0$.

\item 
The boundary-value $\ceps^b$ fulfills 
\begin{align*}
    \ceps^b \in L^2((0,T),H^1(\Oeaf))\cap H^1((0,T),L^2(\Oeaf)),
\end{align*}
with $\ceps^b(0) = 0$, such that 
\begin{align*}
    \|\ceps^b\|_{L^{\infty}((0,T) \times \Oeaf)}  + \|\partial_t \ceps^b\|_{L^2((0,T)\times \Oeaf)} \le C\eps^{\frac{\alpha}{2}}.
\end{align*}
In the case \ref{case:diffusion_high}, we additionally assume $\cea^b \in L^2((0,T),H_{\#}^1(\Oeaf))$.
For the bound of the gradient we consider two different cases:
\begin{itemize}
\item For $\Dea = \eps^{\alpha}D$ we assume: 
\begin{align*}
    \eps^{\alpha} \|\nabla \ceps^b\|_{L^{2}((0,T)\times \Oea)} \le C\eps^{\frac{\alpha}{2}}
\end{align*}
Further, there exists  (see also Proposition \ref{prop:compactness_v0_dn_v0}) $c_0^b \in L^2((0,T)\times \Omega)$ with $\partial_n v_0 \in L^2((0,T)\times \Omega)$ and $c_1^b \in L^2(\Omega,H^1_{\per}(Y)/\R)$, such that
\begin{align*}
   \chi_{\Oeaf} \ceps^b \toa  \chi_{Y_f}c_0^b, \qquad   \chi_{\Oeaf}\eps^{\alpha} \nabla \ceps^b \toa \chi_{Y_f}\left( \partial_n c_0^b e_n + \nabla_y c_1^b\right).
\end{align*}

\item For $\Dea = D \mathrm{diag}(\eps^{-\alpha},\ldots , \eps^{-\alpha},  \eps^{\alpha} ) \in \R^{n\times n}$ we assume:
\begin{align*}
 \|\nabla_{\ox} \ceps^b\|_{L^{2}((0,T)\times \Oea)} 
 + \eps^{\alpha} \|\partial_n \ceps^b\|_{L^{\infty}((0,T)\times \Oea)} \le C \eps^{\frac{\alpha}{2}}
\end{align*}
Further, there exists (see also Proposition \ref{prop:compactness_v0_nabla_v0}) $c_0^b \in L^2((0,T), H^1( \Omega))$  and $c_1^b \in L^2(\Omega,H^1_{\per}(Y)/\R)$ such that
\begin{align*}
      \chi_{\Oeaf} \ceps^b \toa \chi_{Y_f} c_0^b, \qquad    \chi_{\Oeaf}(\nabla_{\ox} \ceps^b , \eps^{\alpha} \partial_n \ceps^b) \toa \chi_{Y_f}\left( \nabla c_0^b + \nabla_y c_1^b \right).
\end{align*}
\end{itemize}
 \item\label{ass:shift_f} It holds that
    \begin{align*}
        \|\delta & f_{\eps,\alpha} \|_{L^2((0,T)\times\Oeah^f)} +  \|\nabla \delta \pea^b \|_{L^2((0,T)\times\Oeah^f)}  +   \|\delta g_{\eps,\alpha}\|_{L^2((0,T)\times\Oeah^f)}  
        \\
        &+ \|\partial_t\delta \ceps^b \|_{L^2((0,T)\times\Oeah^f)} +  \| \delta \ceps^b \|_{L^2((0,T)\times\Oeah^f)}  +   \eps^{\alpha}\| \nabla \delta g_{\eps,\alpha}\|_{L^2((0,T)\times\Oeah^f)}\le \kappa(|l\eps|) \eps^{\frac{\alpha}{2}}.
    \end{align*}
\end{enumerate}

\begin{remark}\ 
\begin{enumerate}[label = (\roman*)]

\item Our focus is the treatment of the advective term and the different scalings for the diffusion coefficient. Therefore we have chosen a homogeneous initial condition and  a linear reaction term. However, it is straightforward to extend our results to more general data.

\item Of course, due to our assumptions we can expect more regularity for the time-derivative. However, we show homogenization and dimension reduction (in particular the strong two-scale compactness results) for the time-derivative being a functional in the dual space of $H^1(\Oeaf,\Sea^+\cup \Sea^-)$ (respectively, with $\Sigma$-periodic boundary conditions), to provide methods for more general data.

\end{enumerate}

\end{remark}

\begin{corollary}
There exists a unique weak solution of the microscopic problem $\eqref{eq:problem_transport_micro}$.
\end{corollary}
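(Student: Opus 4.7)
The plan is to fix $\eps > 0$, which reduces the statement to existence/uniqueness for a classical linear parabolic equation with divergence-free drift, and to apply a Galerkin scheme to the shifted formulation \eqref{eq:weak_transport_wea} for $\wea := \cea - \ceps^b$. I would take a countable basis $\{\psi_k\}_{k \in \N}$ of $H^1(\Oeaf, \Sea^+ \cup \Sea^-)$ (respectively of $H^1_{\#}(\Oeaf, \Sea^+ \cup \Sea^-)$ in case \ref{case:diffusion_high}), seek $\wea^{(N)}(t) = \sum_{k=1}^N a_k^{(N)}(t) \psi_k$ satisfying the weak equation tested against $\psi_1,\ldots,\psi_N$ with zero initial condition, and solve the resulting linear ODE system via Carath\'eodory theory. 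This yields local existence; global existence then follows from uniform-in-$N$ a priori estimates obtained by testing with $\wea^{(N)}$ itself.

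The key observation in the energy identity is that the advection term cancels: since $\nabla \cdot \uea = 0$ in $\Oeaf$, $\uea = 0$ on $\partial_D\Oeaf \cup \Gea$ (the no-slip condition from \eqref{eq:Stokes_micro_strong}), and $\wea^{(N)} = 0$ on $\Sea^\pm$ (with the lateral-boundary contribution vanishing by either Neumann or $\Sigma$-periodicity in the two cases), integration by parts gives
\begin{equation*}
\int_{\Oeaf} \uea \cdot \wea^{(N)} \nabla \wea^{(N)} \dd x = \tfrac{1}{2} \int_{\Oeaf} \uea \cdot \nabla (\wea^{(N)})^2 \dd x = 0.
\end{equation*}
The diffusion term provides the coercive contribution (scaled according to \ref{case:diffusion_low} or \ref{case:diffusion_high}), while the right-hand side in \eqref{eq:weak_transport_wea}, involving $g_{\eps,\alpha}$, $\partial_t \ceps^b$, $\nabla \ceps^b$ and $\uea \ceps^b$, is absorbed via Young's inequality and Gr\"onwall's lemma, using the assumed bounds on the data. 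This delivers uniform bounds for $\wea^{(N)}$ in $L^\infty((0,T),L^2(\Oeaf)) \cap L^2((0,T),H^1(\Oeaf))$, and for $\partial_t \wea^{(N)}$ in the dual of the test space, read off directly from the equation.

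The main technical subtlety, and the reason for the restriction $n \le 4$, is making sense of the advection term $\uea \cea / \eps^2$ in $L^2((0,T)\times \Oeaf)$. By the Sobolev embedding $H^1(\Oeaf) \hookrightarrow L^4(\Oeaf)$ valid for $n \le 4$, together with $\uea \in L^\infty((0,T), H^1(\Oeaf))$ from Section \ref{sec:apriori_fluid} and $\wea^{(N)} \in L^2((0,T),H^1(\Oeaf))$, H\"older's inequality yields $\uea \wea^{(N)} \in L^2((0,T)\times\Oeaf)$; the embedding constant is permitted to depend on $\eps$ at this stage. With these estimates in hand, passing to the limit $N \to \infty$ is standard, combining weak and weak-$\ast$ compactness of $\wea^{(N)}$ and $\nabla \wea^{(N)}$ with the Aubin--Lions lemma (needed for the strong $L^2$-convergence that identifies the advective term in the limit). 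Uniqueness is obtained by testing the equation for the difference of two solutions against this difference, again using the divergence-free cancellation of the advection term and Gr\"onwall.
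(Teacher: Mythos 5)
Your Galerkin argument is correct and is exactly the approach the paper invokes in a single line ("standard for fixed $\eps$, by the Galerkin method"); you have simply spelled out the details — the divergence-free cancellation of the advective term, the role of $n\le 4$ via $H^1\hookrightarrow L^4$, Aubin--Lions for the limit passage, and Gr\"onwall for uniqueness — all of which are what the paper implicitly relies on.
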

\begin{proof}
This result is standard for fixed $\eps$ and can be obtained by the Galerkin method.
\end{proof}

\subsection{A priori estimates for the microscopic solution $\cea$}
\label{sec:apriori_wea}

We derive uniform a priori estimates with respect to  $\eps$ and $\alpha$. Of course, we will obtain different estimates for the gradient depending on the choice of $\Dea$ for the cases \ref{case:diffusion_low} and \ref{case:diffusion_high}. However, the ideas are the same for both cases and we can follow a standard procedure for energy estimates to obtain $L^2$-bounds for $\cea$ and its gradient. A more critical part is to obtain a uniform bound for the time-derivative. First of all, we need $L^{\infty}$-bounds for the concentration $\cea$ to control the advective term uniformly in $\eps$. Next, to establish later strong (two-scale) convergence for $\cea$ via a Kolmogorov-Simon type compactness argument, we need bounds for the time-derivative in dual spaces of Sobolev functions with weighted (with respect to $\eps$ and $\alpha$) norms adapted to the a priori bounds for $\cea$ and $\nabla \cea$ in $L^2$. Finally, for the strong convergence of $\cea$, an additional control with respect to the spatial variable is necessary, and therefore we give an additional estimate for the differences of shifts of the microscopic solutions $\cea $ and $\uea$. 
\\

\noindent\textbf{The case $\Dea = \eps^{\alpha} D$:}
\\

We test equation $\eqref{eq:weak_transport_wea}$ with $\psiea = \wea$ and obtain
\begin{align*}
\frac{1}{2\eps^{\alpha}} \frac{d}{dt}\|\wea\|_{L^2(\Oeaf)}^2& +   \eps^{\alpha}D \|\nabla \wea \|_{L^2(\Oeaf)}^2  - \frac12 \int_{\Oeaf} \frac{\uea}{\eps^2}  \nabla \wea^2 \dd x 
    \\
    &= \frac{1}{\eps^{\alpha}} \int_{\Oeaf} \left(g_{\eps,\alpha} - \partial_t \ceps^b\right) \wea \dd x + \int_{\Oeaf} \left( \eps^{\alpha} D\nabla  \ceps^b - \frac{\uea}{\eps^2} \ceps^b \right) \cdot \nabla \wea \dd x.
\end{align*}
For the convective term we can use integration by parts together with the zero boundary conditions of $\uea$ and $\wea$, and also the divergence-free condition of $\uea$ to obtain
\begin{align*}
    \frac12 \int_{\Oeaf} \frac{\uea}{\eps^2}  \nabla \wea^2 \dd x  = 0.
\end{align*}
It remains to estimate the terms on the right-hand side. For the first term, we obtain with the assumptions on $g_{\eps,\alpha}$ and $\partial_t \ceps^b$
\begin{align*}
     \frac{1}{\eps^{\alpha}} \int_{\Oeaf} \left(g_{\eps,\alpha} - \partial_t \ceps^b\right) \wea \dd x \le \frac{C}{\eps^{\frac{\alpha}{2}}} \|\wea\|_{L^2(\Oeaf)} \le C  \left(1 + \frac{1}{\eps^{\alpha}} \|\wea \|_{L^2(\Oeaf)}^2 \right).
\end{align*}
For the second term on the right-hand side in the above equation  we consider separately the diffusive and advective term. For the first one, we have
\begin{align}
\begin{aligned}\label{ineq:apriori_aux}
\int_{\Oeaf} \eps^{\alpha} D\nabla  \ceps^b  \cdot \nabla \wea \dd x   \le C \eps^{\frac{\alpha}{2}} \|\nabla \wea\|_{L^2(\Oeaf)} 
\le C(\theta)    + \theta \eps^{\alpha} \|\nabla\wea\|_{L^2(\Oeaf)}^2
\end{aligned}
\end{align}
for arbitrary $\theta >0$.
For the  advective term  we use the essential boundedness of $\ceps^b$ and the a priori bound for $\uea $ from Proposition \ref{prop:apriori_fluid_velocity}, to obtain
\begin{align*}
 \int_{\Oeaf}  \frac{\uea}{\eps^2} \ceps^b \cdot \nabla \wea \dd x   \le  C \left\|\frac{\uea}{\eps^2} \right\|_{L^2(\Oeaf)} \|\nabla \wea \|_{L^2(\Oeaf)}
 \le  C(\theta) + \theta \eps^{\alpha} \|\nabla\wea\|_{L^2(\Oeaf)}^2.
\end{align*}
Altogether, choosing $\theta$ small enough, we can use an absorption argument, then we integrate with respect to time and use the Gronwall-inequality to obtain
\begin{align*}
    \frac{1}{\eps^{\frac{\alpha}{2}}}\| \wea\|_{L^{\infty}((0,T),L^2( \Oeaf))} + \eps^{\frac{\alpha}{2}} \|\nabla \wea\|_{L^2((0,T)\times \Oeaf)} \le C .
\end{align*}
Due to the assumptions on $\ceps^b$, we obtain the same estimate also for $\cea$.
We summarize our results in the following Proposition.

\begin{proposition}\label{prop:apriori_concentration}
For $\Dea = \eps^{\alpha} D$  it holds that 
\begin{align*}
     \| \wea\|_{L^{\infty}((0,T),L^2( \Oeaf))} + \eps^{\alpha} \|\nabla \wea\|_{L^2((0,T)\times \Oeaf)} \le C \eps^{\frac{\alpha}{2}}.
\end{align*}
The same estimate is valid for $\cea$ instead of $\wea$.
\end{proposition}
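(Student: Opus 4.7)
The plan is a standard energy estimate: test the weak formulation \eqref{eq:weak_transport_wea} with $\psiea=\wea$, exploit incompressibility of $\uea$ to annihilate the self-advective term, and handle the right-hand side via Young's inequality using the a priori bounds for $\uea$ from Proposition \ref{prop:apriori_fluid_velocity}. Since the conclusion for $\cea=\wea+\ceps^b$ will follow at once from the triangle inequality and the assumed bounds on $\ceps^b$, the real task is the bound on $\wea$.

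Concretely, testing with $\wea$ yields on the left-hand side $\tfrac{1}{2\eps^{\alpha}}\tfrac{d}{dt}\|\wea\|_{L^2(\Oeaf)}^2+\eps^{\alpha}D\|\nabla\wea\|_{L^2(\Oeaf)}^2$ plus the convective term $-\tfrac12\int_{\Oeaf}\tfrac{\uea}{\eps^2}\cdot\nabla\wea^2\,\dd x$. This last term vanishes after integration by parts thanks to $\nabla\cdot\uea=0$ in $\Oeaf$, the no-slip condition $\uea=0$ on $\partial_D\Oeaf\cup\Gea$, and the homogeneous trace $\wea=0$ on $\Sea^\pm$. On the right-hand side, the source term is controlled by Cauchy--Schwarz, using $\|g_{\eps,\alpha}\|_{L^{\infty}}\le C$ and $\|\partial_t\ceps^b\|_{L^2}\le C\eps^{\alpha/2}$, giving $\tfrac{1}{\eps^{\alpha}}\int_{\Oeaf}(g_{\eps,\alpha}-\partial_t\ceps^b)\wea\,\dd x\le C+\tfrac{C}{\eps^{\alpha}}\|\wea\|_{L^2(\Oeaf)}^2$, which feeds directly into Gronwall. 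The diffusive lift $\int\eps^{\alpha}D\nabla\ceps^b\cdot\nabla\wea\,\dd x$ is absorbed via Young's inequality using $\eps^{\alpha}\|\nabla\ceps^b\|_{L^2}\le C\eps^{\alpha/2}$, producing a term of the form $C(\theta)+\theta\eps^{\alpha}\|\nabla\wea\|_{L^2(\Oeaf)}^2$.

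The main point to watch is the advective lift $-\int_{\Oeaf}\tfrac{\uea}{\eps^2}\ceps^b\cdot\nabla\wea\,\dd x$: here I would use the essential boundedness of $\ceps^b$ together with the crucial estimate $\|\uea/\eps^2\|_{L^2(\Oeaf)}\le C\eps^{\alpha/2}$, which is exactly the scaling produced in Section \ref{sec:apriori_fluid}. Applying Young with a small parameter $\theta>0$ yields again $C(\theta)+\theta\eps^{\alpha}\|\nabla\wea\|_{L^2(\Oeaf)}^2$, and after summing the two absorption terms, $\theta$ can be chosen small enough that the resulting $\eps^{\alpha}\|\nabla\wea\|^2$ is strictly dominated by the $\eps^{\alpha}D\|\nabla\wea\|^2$ on the left. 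Integrating in time and applying Gronwall's inequality then delivers $\eps^{-\alpha/2}\|\wea\|_{L^{\infty}((0,T),L^2(\Oeaf))}+\eps^{\alpha/2}\|\nabla\wea\|_{L^2((0,T)\times\Oeaf)}\le C$, which is the claimed bound.

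The only real difficulty is bookkeeping of $\eps$-powers: it must be checked that the scaling $\|\uea\|_{L^2(\Oeaf)}=O(\eps^{2+\alpha/2})$ from the Bogovski\u\i-based pressure control is precisely what is needed to pair with the $L^{\infty}$-bound $\|\ceps^b\|_{L^{\infty}}\le C\eps^{\alpha/2}$ (or even just $\le C$) and produce something absorbable by the weak $\eps^{\alpha}$-scaled diffusion; a larger Stokes velocity would destroy this argument. All the remaining estimates are textbook, so no additional machinery beyond Proposition \ref{prop:apriori_fluid_velocity}, Young's inequality, and Gronwall is required.
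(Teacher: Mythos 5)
Your proposal is correct and follows essentially the same route as the paper: test with $\wea$, kill the self-advective term by incompressibility and the boundary conditions, estimate the source/lift terms with Cauchy--Schwarz and Young, absorb, and apply Gronwall. One small inaccuracy in your commentary (not affecting the proof): the velocity bound $\|\uea\|_{L^2(\Oeaf)}=O(\eps^{2+\alpha/2})$ in Proposition~\ref{prop:apriori_fluid_velocity} comes directly from the energy estimate and the Poincar\'e inequality of Lemma~\ref{lem:Poincare} (the pressure term drops out since $\nabla\cdot\uea=0$), not from the Bogovski\u\i construction, which is only needed for the pressure bound of Proposition~\ref{prop:apriori_fluid_pressure}.
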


\noindent\textbf{The case $\Dea = D \mathrm{diag}(\eps^{-\alpha},\ldots , \eps^{-\alpha},  \eps^{\alpha} ) \in \R^{n\times n}$:}
\\

We proceed in the same way as in the previous case, testing $\eqref{eq:problem_transport_micro}$ with $\wea$ to obtain
\begin{align*}
\frac{1}{2\eps^{\alpha}} \frac{d}{dt}\|\wea\|_{L^2(\Oeaf)}^2& +   \frac{1}{\eps^{\alpha}}D \|\nabla_{\ox} \wea \|_{L^2(\Oeaf)}^2  + \eps^{\alpha} D\|\partial_n \wea\|^2_{L^2(\Oeaf)} - \frac12 \int_{\Oeaf} \frac{\uea}{\eps^2}  \nabla \wea^2 \dd x 
    \\
    =& \frac{1}{\eps^{\alpha}} \int_{\Oeaf} \left(g_{\eps,\alpha} - \partial_t \ceps^b\right) \wea \dd x - \int_{\Oeaf}  \frac{\uea}{\eps^2} \ceps^b \cdot \nabla \wea \dd x
    \\
    &+ \int_{\Oeaf} \frac{1}{\eps^{\alpha}} D\nabla_{\ox} \ceps^b \cdot \nabla_{\ox} \wea  + \eps^{\alpha} D\partial_n \ceps^b \partial_n \wea \dd x.
\end{align*}
Now, the main difference from the proof of Proposition \ref{prop:apriori_concentration} lies in the estimate of the last term on the right-hand side. Here, we can  argue in the same way as for $\eqref{ineq:apriori_aux}$ separately for $\nabla_{\ox}$ and $\partial_n$.  In summary, we get
\begin{proposition}\label{prop:apriori_concentration_diff_nabla}
For $\Dea = D \mathrm{diag}(\eps^{-\alpha},\ldots , \eps^{-\alpha},  \eps^{\alpha} ) \in \R^{n\times n}$ it holds that
\begin{align*}
 \|\wea\|_{L^2(\Oea)} + \|\nabla_{\ox} \wea\|_{L^2(\Oea)} +  \eps^{\alpha} \|\partial_n \wea \|_{L^2(\Oea)} \le C \eps^{\frac{\alpha}{2}}.
\end{align*}
The same estimate is valid for $\cea$ instead of $\wea$.
\end{proposition}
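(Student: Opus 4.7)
My plan is to repeat the energy scheme from Proposition \ref{prop:apriori_concentration}, carefully tracking the anisotropic scaling of $\Dea$. After testing $\eqref{eq:weak_transport_wea}$ with $\psiea=\wea$, the self-advection term $\frac12\int_{\Oeaf}\frac{\uea}{\eps^2}\cdot\nabla\wea^2\,\dd x$ vanishes thanks to $\nabla\cdot\uea=0$ together with the zero traces of $\uea$ on $\partial_D\Oeaf\cup\Gea$ and of $\wea$ on $\Sea^\pm$, producing exactly the identity displayed just before the statement, with the crucial point that the diffusive contribution on the left-hand side splits into $\eps^{-\alpha}D\|\nabla_{\ox}\wea\|_{L^2(\Oeaf)}^2 + \eps^{\alpha}D\|\partial_n\wea\|_{L^2(\Oeaf)}^2$.

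The source contribution $\frac{1}{\eps^\alpha}\int_{\Oeaf}(g_{\eps,\alpha}-\partial_t\ceps^b)\wea\,\dd x$ is treated as before: the $L^\infty$-bound on $g_{\eps,\alpha}$ and the $L^2$-bound on $\partial_t\ceps^b$ each contribute an $\eps^{\alpha/2}$-factor from $|\Oeaf|^{1/2}$, yielding a bound of the form $C+C\eps^{-\alpha}\|\wea\|_{L^2(\Oeaf)}^2$. For the anisotropic diffusive data term I would apply Young's inequality separately to $\eps^{-\alpha}D\int\nabla_{\ox}\ceps^b\cdot\nabla_{\ox}\wea\,\dd x$ and $\eps^\alpha D\int\partial_n\ceps^b\,\partial_n\wea\,\dd x$, using respectively the bounds $\|\nabla_{\ox}\ceps^b\|_{L^2(\Oeaf)}\le C\eps^{\alpha/2}$ and $\eps^\alpha\|\partial_n\ceps^b\|_{L^\infty((0,T)\times\Oea)}\le C\eps^{\alpha/2}$ (the latter upgraded to an $L^2(\Oeaf)$-bound via the volume factor $|\Oeaf|^{1/2}\sim\eps^{\alpha/2}$). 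Each contribution produces a constant $C(\theta)$ plus $\theta$ times the corresponding diffusive term on the left, so both are absorbable.

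The delicate step, and the one I expect to be the main obstacle, is the cross advection-by-boundary-data term $-\int_{\Oeaf}\frac{\uea}{\eps^2}\ceps^b\cdot\nabla\wea\,\dd x$, since the single scaling $\eps^{-2}\uea$ must be balanced against the two distinct weights $\eps^{-\alpha}$ and $\eps^{\alpha}$ available on the left. My plan is to decompose this integral into horizontal and vertical components and use $\|\uea/\eps^2\|_{L^2(\Oeaf)}\le C\eps^{\alpha/2}$ from Proposition \ref{prop:apriori_fluid_velocity} together with $\|\ceps^b\|_{L^\infty((0,T)\times\Oeaf)}\le C\eps^{\alpha/2}$. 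Applying Young with splitting weight $\eps^{-\alpha}$ in the horizontal part and weight $\eps^{\alpha}$ in the vertical part gives residuals of order $\eps^{3\alpha}$ and $\eps^{\alpha}$ plus $\theta$-multiples of $\eps^{-\alpha}\|\nabla_{\ox}\wea\|_{L^2}^2$ and $\eps^{\alpha}\|\partial_n\wea\|_{L^2}^2$, which are absorbable into the left. Finally, choosing $\theta$ sufficiently small, integrating in time and applying Gronwall's inequality yields the claimed bound for $\wea$; the corresponding estimate for $\cea=\wea+\ceps^b$ then follows directly from the assumed bounds on $\ceps^b$.
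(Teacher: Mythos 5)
Your proposal is correct and takes the same route as the paper: test $\eqref{eq:weak_transport_wea}$ with $\wea$, kill the self-advection term via the divergence-free condition and boundary data, bound the source term as before, and absorb the anisotropic diffusive and advective boundary-data terms into the two differently weighted gradient terms on the left via Young, then apply Gronwall. The paper disposes of this in one line, saying one ``can argue in the same way as for $\eqref{ineq:apriori_aux}$ separately for $\nabla_{\ox}$ and $\partial_n$''; your write-up simply makes the splitting weights and the residual orders $\eps^{3\alpha}$ and $\eps^{\alpha}$ explicit.
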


\noindent\textbf{Estimates for the time-derivative $\partial_t \wea$:}
To control the time-derivative, we need control for the convective term. As usual when dealing with flow in porous medium, the embedding $H^1$ into $L^4$ (at least for $n\leq 4$) is not applicable, since the gradient of $\uea$ scales badly with respect to $\eps$. To overcome this problem, it is a standard approach to show $L^{\infty}$-bounds for the concentration $\cea$. The proof follows the same lines as in the case of full (perforated domains), so we only give a brief sketch. 

\begin{lemma}\label{lem:apriori_Linfty}
For both cases \ref{case:diffusion_low} and \ref{case:diffusion_high} for the diffusion coefficient $\Dea$, it holds that
\begin{align*}
    \|\cea \|_{L^{\infty}((0,T)\times \Oeaf)} \le C.
\end{align*}
\end{lemma}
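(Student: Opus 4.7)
My plan is to apply a Moser-type $L^p$ iteration to $\wea = \cea - \ceps^b$, which vanishes on the Dirichlet boundary $\Sea^\pm$ and satisfies \eqref{eq:weak_transport_wea}. For each integer $k\geq 1$, I would test \eqref{eq:weak_transport_wea} with $\psiea = \wea|\wea|^{2k-2}$ and obtain, after the chain rule,
\begin{align*}
\frac{1}{2k\eps^{\alpha}}\frac{d}{dt}\|\wea\|_{L^{2k}(\Oeaf)}^{2k} + (2k-1)\int_{\Oeaf}\Dea\,\wea^{2k-2}|\nabla\wea|^2\,dx = \mathrm{RHS}.
\end{align*}
The crucial observation is that the advective self-interaction vanishes: rewriting $\int (\uea/\eps^2)\cdot \wea\,\nabla\wea^{2k-1}\,dx = \tfrac{2k-1}{2k}\int (\uea/\eps^2)\cdot\nabla\wea^{2k}\,dx$ and integrating by parts, the bulk contribution disappears because $\nabla\cdot\uea=0$, while the boundary contributions vanish since $\uea = 0$ on $\Gea\cup\partial_D\Oeaf$ and $\wea=0$ on $\Sea^\pm$. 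The same manipulation rewrites the advective source term $-\int(\uea/\eps^2)\ceps^b\cdot\nabla\wea^{2k-1}\,dx$ as $\int(\uea/\eps^2)\cdot\nabla\ceps^b\,\wea^{2k-1}\,dx$.

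Next, the remaining RHS terms (coming from $g_{\eps,\alpha}$, $\partial_t\ceps^b$, $\Dea\nabla\ceps^b$, and $(\uea/\eps^2)\nabla\ceps^b$) are estimated by H\"older and Young, using the a priori velocity bound of Proposition \ref{prop:apriori_fluid_velocity} and the prescribed $\eps$-scalings of the data, and absorbing a small multiple of the diffusion term into the LHS. To pass from $L^p$ bounds to $L^\infty$, I would rescale vertically by $\hat x_n = x_n/\eps^\alpha$ to a layer of thickness $O(1)$, and use the uniform extension from Lemma \ref{lem:extension_operator} to treat the perforation as a full layer. On this rescaled, extended domain the Sobolev embedding $H^1\hookrightarrow L^{2^*}$ with $2^*=2n/(n-2)>2$ has an $\eps$-independent constant, which yields the Alikakos/Moser recursion $\|\wea\|_{L^{p_{k+1}}}\le (Ck)^{1/p_k}\max(\|\wea\|_{L^{p_k}},1)$ with $p_k=2^k p_0$. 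Iterating produces $\|\wea\|_{L^\infty((0,T)\times\Oeaf)}\le C$, and hence $\|\cea\|_{L^\infty}\le C$ after adding the $L^\infty$ bound on $\ceps^b$.

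The main obstacle is keeping every constant uniform in $\eps$, despite the anisotropic diffusion $\Dea$ in case \ref{case:diffusion_high} and the vanishing thickness of the layer. This is handled by performing the whole iteration in the rescaled layer of fixed thickness: the $\Dea$-weighted diffusion term, together with the scalings used in Proposition \ref{prop:apriori_concentration} and Proposition \ref{prop:apriori_concentration_diff_nabla}, matches the rescaling of the test function so that the recursion constant $C$ is genuinely $\eps$-independent. The only genuinely quasilinear feature, the advective self-term, is disposed of once and for all by the integration-by-parts in the first step.
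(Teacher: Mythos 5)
Your plan is an Alikakos/Moser $L^p$-iteration applied to $\wea=\cea-\ceps^b$. The paper instead uses a De Giorgi/Stampacchia truncation applied \emph{directly to} $\cea$: it sets $W:=e^{-\omega t}\cea$, tests \eqref{eq:weak_transport_vea} with $e^{-\omega t}W_k^+$ for a truncation level $k>\|\ceps^b\|_{L^\infty((0,T)\times\Oeaf)}$ (this is exactly what makes $W_k^+=0$ on $\Sea^\pm$ and hence an admissible test function), shows the advective term vanishes by the same divergence-free/zero-boundary integration by parts you use, and then invokes the classical boundedness theorem of Ladyzhenskaya--Solonnikov--Ural'tseva. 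The crucial payoff of the paper's route is that \emph{no boundary-data forcing ever appears on the right-hand side}: the only source left is $g_{\eps,\alpha}\in L^\infty$.

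Your substitution to $\wea$ reintroduces the boundary-data sources, and the advective one is where the argument genuinely breaks. After the integration by parts you describe, you are left with
\begin{equation*}
\int_{\Oeaf}\frac{\uea}{\eps^2}\cdot\nabla\ceps^b\,\wea|\wea|^{2k-2}\,\dd x ,
\end{equation*}
and the available $\eps$-uniform bounds are only $\|\uea/\eps^2\|_{L^\infty_tL^2_x}\le C\eps^{\alpha/2}$ and, in case \ref{case:diffusion_low}, $\|\nabla\ceps^b\|_{L^2}\le C\eps^{-\alpha/2}$ (in case \ref{case:diffusion_high} the horizontal gradient is again only in $L^2$, while $\partial_n\ceps^b$ is in $L^\infty$ but with a factor $\eps^{-\alpha/2}$). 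So the coefficient $\tfrac{\uea}{\eps^2}\cdot\nabla\ceps^b$ lands only in $L^2_tL^1_x$ (resp.\ $L^2_tL^2_x$) with an $O(1)$ norm, which is below the integrability threshold needed to close a standard Moser recursion: any H\"older split either requires $\uea/\eps^2\in L^q$ for some $q>2$ with $\eps$-uniform constant (not available, since $\nabla(\uea/\eps^2)$ blows up like $\eps^{\alpha/2-1}$) or $\nabla\ceps^b\in L^q$ for $q>2$ (not assumed), or else leaves a factor $\|\wea\|_{L^\infty}$ on the right, which is circular. The term $\int\Dea\nabla\ceps^b\cdot\nabla\psiea$ has the same problem. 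The fix is to truncate $\cea$ at a level above $\|\ceps^b\|_{L^\infty}$ \emph{before} iterating, exactly as the paper does; once you do this you could indeed run a Moser/Alikakos iteration on $(W-k)^+$ instead of citing Ladyzhenskaya, but the truncation is essential, not an optional stylistic choice.

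A secondary caveat: you assert that vertical rescaling plus the extension operator of Lemma \ref{lem:extension_operator} yields an $\eps$-independent $H^1(\Omega)\hookrightarrow L^{2^*}(\Omega)$ constant for the rescaled $\wea^k$. The extension only controls $\|\nabla E_\eps v\|_{L^2}\le C\|\nabla v\|_{L^2}$ for the full gradient, not componentwise; after the anisotropic rescaling this mixes the differently weighted horizontal and vertical components, which is precisely the obstruction the paper flags at the start of Section \ref{sec:compactness_transport}. This does not necessarily kill your iteration (one can work with anisotropic embeddings adapted to the $\Dea$-weighted energy), but it needs to be argued, not merely asserted.
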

\begin{proof}
We only sketch the main ideas and refer for example to the proof of \cite[Lemma 5.2]{gahn_wiedemann_etal_2025} for more details. We emphasize that now we work directly with the weak formulation of $\cea$ instead of $\wea$. We define $W:= e^{-\omega t} \cea $ with $\omega>0$ defined below and $t \in (0,T)$. Further, we put $W_k:= W - k $ for $k\in \N$ and $W_k^+:= (W-k)^+$ with $(\cdot)^+ := \max\{0,\cdot\}$, and use $e^{-\omega t} W_k^+$ as a test-function in $\eqref{eq:weak_transport_vea}$. This is an admissible test-function for $k> \|\ceps^b\|_{L^{\infty}((0,T)\times \Oea)}$, since then $W_k^+ = 0 $ on $\Sea^{\pm}$. We  obtain after integration in time from $0$ to $t \in [0,T]$
\begin{align*}
    \frac{1}{2\eps^{\alpha}} \|W_k^+(t)\|_{L^2(\Oeaf)}^2 + \int_0^t &\int_{\Oeaf} \Dea \nabla W_k^+ \cdot \nabla W_k^+ \dd x\dd s 
    \\
    =& \int_0^t \int_{\Oeaf} \frac{\uea}{\eps^2} W \cdot \nabla W_k^+ \dd x\dd s + \frac{1}{\eps^{\alpha}}\int_0^t \int_{\Oeaf} e^{-\omega s} g_{\eps,\alpha} W_k^+ \dd x\dd s.
\end{align*}
For the convective term we use integration by parts to get
\begin{align*}
\int_{\Oeaf} \frac{\uea}{\eps^2} W \cdot \nabla W_k^+ \dd x  = \frac12 \int_{\Oeaf} \frac{\uea}{\eps^2} \nabla (W_k^+)^2 \dd x + k \int_{\Oeaf} \frac{\uea}{\eps^2} \nabla W_k^+ \dd x = 0,
\end{align*}
where in the last equality we used the zero boundary condition of $\uea$ and $W_k^+$. The force term can be estimated in the following way by using the $L^{\infty}$ bound for $g_{\eps,\alpha}$
\begin{align*}
\frac{1}{\eps^{\alpha}}\int_0^t \int_{\Oeaf} e^{-\omega s} g_{\eps,\alpha} W_k^+ \dd x\dd s \le \frac{C}{\eps^{\alpha}} \left( \int_0^t \int_{\{W_k>k\}} \dd x\dd s +  \|W_k^+\|_{L^2((0,t)\times \Oeaf)}^2 \right) .
\end{align*}
Now, the Gronwall-inequality and \cite[II Theorem 6.1 and Remark 6.2]{Ladyzenskaja} imply the desired result.
\end{proof}

Now, we are able to estimate the time-derivative $\partial_t \cea$. Here, it is necessary to estimate the norm in the dual space of functions spaces suitably scaled with respect to $\eps$ and $\alpha$, and therefore in particular depending on the choice of $\Dea$. For this we introduce the space
$\spaceH_{\Dea}$ consisting of functions in $H^1(\Oeaf, \Sea^+ \cup \Sea^-) $ in the case \ref{case:diffusion_low} and $H^1_{\#}(\Oeaf, \Sea^+ \cup \Sea^-)$ in the case \ref{case:diffusion_high},  together with the norm
\begin{align*}
   \|\psiea\|_{\spaceH_{\Dea}}^2 := \frac{1}{\eps^{\alpha}}\|\psiea\|_{L^2(\Oeaf)}^2 + \|\sqrt{\Dea} \nabla \psiea\|_{L^2(\Oeaf)}^2.  
\end{align*}
We emphasize, that from our a priori estimates on $\cea$ above we have 
\begin{align*}
    \|\cea\|_{L^2((0,T),\spaceH_{\Dea})} \le C.
\end{align*}

\begin{proposition}\label{prop:apriori_time_derivative}
It holds that
\begin{align*}
    \frac{1}{\eps^{\alpha}} \|\partial_t \cea\|_{L^2((0,T),\spaceH_{\Dea}')} \le C.
\end{align*}
\end{proposition}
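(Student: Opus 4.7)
My plan is the standard duality argument: isolate the time-derivative in the weak formulation $\eqref{eq:weak_transport_vea}$ and bound each of the three remaining contributions (source, diffusion, advection) by $C(t)\|\psiea\|_{\spaceH_{\Dea}}$ uniformly over $\psiea\in\spaceH_{\Dea}$, with $C\in L^2(0,T)$. Explicitly, for a.e.\ $t\in(0,T)$ and $\psiea\in\spaceH_{\Dea}$,
\[
\frac{1}{\eps^{\alpha}}\langle\partial_t\cea,\psiea\rangle=\frac{1}{\eps^{\alpha}}\int_{\Oeaf}g_{\eps,\alpha}\psiea\dd x-\int_{\Oeaf}\Dea\nabla\cea\cdot\nabla\psiea\dd x+\int_{\Oeaf}\frac{\uea}{\eps^{2}}\cea\cdot\nabla\psiea\dd x,
\]
and I estimate each piece separately.

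For the source term, the $L^{\infty}$-bound on $g_{\eps,\alpha}$ together with $|\Oeaf|\sim\eps^{\alpha}$ gives $\|g_{\eps,\alpha}\|_{L^{2}(\Oeaf)}\le C\eps^{\alpha/2}$; combined with the fact that $\eps^{-\alpha/2}\|\psiea\|_{L^{2}(\Oeaf)}\le\|\psiea\|_{\spaceH_{\Dea}}$ by definition, this yields $\eps^{-\alpha}|\int g_{\eps,\alpha}\psiea|\le C\|\psiea\|_{\spaceH_{\Dea}}$. The diffusive term is handled directly by Cauchy--Schwarz: $|\int\Dea\nabla\cea\cdot\nabla\psiea|\le\|\sqrt{\Dea}\nabla\cea(t)\|_{L^{2}(\Oeaf)}\|\psiea\|_{\spaceH_{\Dea}}$, and the prefactor lies in $L^{2}(0,T)$ by Proposition \ref{prop:apriori_concentration} in case \ref{case:diffusion_low} and Proposition \ref{prop:apriori_concentration_diff_nabla} in case \ref{case:diffusion_high}.

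The advective term is the delicate one, and its control is exactly why Lemma \ref{lem:apriori_Linfty} was proved. Using $\|\cea\|_{L^{\infty}}\le C$ and the velocity estimate $\|\uea\|_{L^{2}(\Oeaf)}\le C\eps^{\alpha/2+2}$ from Proposition \ref{prop:apriori_fluid_velocity}, I get
\[
\left|\int_{\Oeaf}\frac{\uea}{\eps^{2}}\cea\cdot\nabla\psiea\dd x\right|\le\frac{1}{\eps^{2}}\|\uea\|_{L^{2}(\Oeaf)}\|\cea\|_{L^{\infty}(\Oeaf)}\|\nabla\psiea\|_{L^{2}(\Oeaf)}\le C\eps^{\alpha/2}\|\nabla\psiea\|_{L^{2}(\Oeaf)}.
\]
It remains to re-express $\eps^{\alpha/2}\|\nabla\psiea\|_{L^{2}(\Oeaf)}$ in terms of $\|\psiea\|_{\spaceH_{\Dea}}$. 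In case \ref{case:diffusion_low} this is trivial since $\|\sqrt{\Dea}\nabla\psiea\|_{L^{2}(\Oeaf)}=\sqrt{D}\,\eps^{\alpha/2}\|\nabla\psiea\|_{L^{2}(\Oeaf)}$. In case \ref{case:diffusion_high}, the components of $\nabla\psiea$ are weighted anisotropically, and splitting the $L^{2}$-norm yields
\[
\eps^{\alpha/2}\|\nabla_{\ox}\psiea\|_{L^{2}(\Oeaf)}\le\frac{\eps^{\alpha}}{\sqrt{D}}\|\psiea\|_{\spaceH_{\Dea}},\qquad\eps^{\alpha/2}\|\partial_{n}\psiea\|_{L^{2}(\Oeaf)}\le\frac{1}{\sqrt{D}}\|\psiea\|_{\spaceH_{\Dea}},
\]
so that $\eps^{\alpha/2}\|\nabla\psiea\|_{L^{2}(\Oeaf)}\le C\|\psiea\|_{\spaceH_{\Dea}}$ uniformly in $\eps$.

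Combining the three bounds gives $\eps^{-\alpha}|\langle\partial_{t}\cea(t),\psiea\rangle|\le C(t)\|\psiea\|_{\spaceH_{\Dea}}$ for a.e.\ $t$, with $C\in L^{2}(0,T)$; taking the supremum over $\|\psiea\|_{\spaceH_{\Dea}}=1$ and then the $L^{2}$-norm in time proves the claim. The only real obstacle is the advective term, and it has already been disarmed by the $L^{\infty}$-bound of Lemma \ref{lem:apriori_Linfty}, which compensates the factor $\eps^{-2}$ in front of $\uea$; the remaining work is bookkeeping of the $\eps$- and $\alpha$-powers, with the anisotropic case \ref{case:diffusion_high} being the only place requiring a split of $\nabla\psiea$.
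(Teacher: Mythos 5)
Your proof is correct and follows essentially the same duality argument as the paper: isolate $\partial_t\cea$ in the weak formulation, bound the source term using the $L^\infty$-assumption on $g_{\eps,\alpha}$ and the $L^2$-part of the $\spaceH_{\Dea}$-norm, bound the diffusive term by Cauchy--Schwarz against $\|\sqrt{\Dea}\nabla\psiea\|$, and control the advective term with the $L^\infty$-bound of Lemma \ref{lem:apriori_Linfty} together with the velocity estimate, all against the observation $\eps^{\alpha/2}\|\nabla\psiea\|_{L^2}\le C\|\psiea\|_{\spaceH_{\Dea}}$. The paper states this last observation in the compressed form $\|\nabla\psiea\|_{L^2}\le C\eps^{-\alpha/2}$ for $\|\psiea\|_{\spaceH_{\Dea}}\le 1$; you unpacked the anisotropic case \ref{case:diffusion_high} component-by-component, which is just a more explicit rendering of the same bookkeeping.
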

\begin{proof}
We test equation $\eqref{eq:weak_transport_vea}$ with $\psiea \in \spaceH_{\Dea}$ such that $\|\psiea\|_{\spaceH_{\Dea}} \le 1$. In particular, we have
\begin{align*}
    \|\nabla \psiea \|_{L^2(\Oeaf)}  \le C\eps^{-\frac{\alpha}{2}}.
\end{align*}

We get almost everywhere in $(0,T)$
\begin{align*}
    \frac{1}{\eps^{\alpha}}\bigg|&\langle \partial_t \cea , \psiea \rangle_{H^1(\Oeaf)}\bigg| 
    \\
    =& \left| - \int_{\Oeaf}  \Dea\nabla \cea \cdot \nabla \psiea - \frac{\uea}{\eps^2} \cea \nabla \psiea \dd x + \frac{1}{\eps^{\alpha}}\int_{\Oeaf} g_{\eps,\alpha} \psiea \dd x \right|
    \\
    \le& C \|\sqrt{\Dea} \nabla \cea \|_{L^2(\Oeaf)} \|\sqrt{\Dea}\nabla \psiea\|_{L^2(\Oeaf)} 
    \\
    & + \left\|\frac{\uea}{\eps^2}\right\|_{L^2(\Oeaf)} \|\cea\|_{L^{\infty}((0,T)\times \Oeaf)} \|\nabla \psiea \|_{L^2(\Oeaf)} + \eps^{-\alpha} \|g_{\eps,\alpha} \|_{L^2(\Oeaf)} \|\psiea\|_{L^2(\Oeaf)}
    \\
    \le& C \|\sqrt{\Dea} \nabla \cea \|_{L^2(\Oeaf)}  + C \eps^{-\frac{\alpha}{2}}\left\|\frac{\uea}{\eps^2}\right\|_{L^2(\Oeaf)} + C ,
\end{align*}
where we used the $L^{\infty}$ bound for $g_{\eps,\alpha}$ and $\cea$ obtained in Lemma \ref{lem:apriori_Linfty}. Taking the supremum over $\psiea$, squaring, integrating with respect to time and using the a priori estimates for $\cea$ and $\uea$, we get the desired result. 
\end{proof}

\noindent\textbf{Estimates for the shifts:}
To obtain strong convergence (in the two-scale sense),
more control on the spatial variable is necessary. For this, we introduce the following notation for the differences of shifted functions. Let $\psiea : \R^{n-1} \times (-\eps^{\alpha},\eps^{\alpha}) \rightarrow \R$ and $l \in \Z^{n-1} \times \{0\}$. We define  
\begin{align*}
\delta \psiea := \psiea(\cdot + l\eps)  - \psiea,
\end{align*}
where in this notation we neglect the dependence on $l$ and $\eps$, which should be clear from the context. In the following, we extend the function $\uea$ by zero to $\R^{n-1} \times (-\eps^{\alpha},\eps^{\alpha})$ and the function $\cea$ first with the extension operator from Lemma \ref{lem:extension_operator} below to $\Oea$, and then in an arbitrary smooth way to $\R^{n-1}\times (-\eps^{\alpha},\eps^{\alpha})$, such that the a priori estimates,  in particular the $L^{\infty}$-estimate, remain valid (this can be done by mirroring). We use the same notation for both extensions as before.

\begin{lemma}[Extension operator]\label{lem:extension_operator}
There exists an extension operator $E_{\eps}:H^1(\Oeaf) \rightarrow H^1(\Oea)$ such that for all $\vea \in H^1(\Oeaf)$ it holds that
\begin{align*}
    \|E_{\eps} \vea\|_{L^2(\Oea)} \le C \|\vea\|_{L^2(\Oeaf)} , \qquad \|\nabla E_{\eps} \vea\|_{L^2(\Oea)} \le C \|\nabla \vea \|_{L^2(\Oeaf)}.
\end{align*}
If additionally  $\vea \in L^{\infty}(\Oeaf)$, it holds that
\begin{align*}
    \|E_{\eps}\vea\|_{L^{\infty}(\Oea)} \le C \|\vea \|_{L^{\infty}(\Oeaf)}.
\end{align*}
\end{lemma}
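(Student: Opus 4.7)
The plan is to use the classical two-step construction for extension operators on periodically perforated domains: first build a reference extension on the unit cell $Y$, then glue together rescaled copies cell by cell.

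For the reference extension, I would construct an operator $E_0 \colon H^1(Y_f) \to H^1(Y)$ satisfying $\|E_0 v\|_{L^2(Y)} \le C\|v\|_{L^2(Y_f)}$, $\|\nabla E_0 v\|_{L^2(Y)} \le C\|\nabla v\|_{L^2(Y_f)}$ and $\|E_0 v\|_{L^\infty(Y)} \le C\|v\|_{L^\infty(Y_f)}$, with constants depending only on the reference geometry. Since $Y_f$ is a connected Lipschitz domain and the inclusion $Y_s$ is strictly interior to $Y$ (so that $\partial Y \subset \partial Y_f$), a Stein/Calder\'on-type extension yields the two Sobolev bounds. To also secure the $L^\infty$-estimate, I would post-compose such an extension with the truncation $T_M(s) := \max(-M,\min(M,s))$ at level $M := \|v\|_{L^\infty(Y_f)}$; since $|v| \le M$ on $Y_f$ the truncation leaves the extension unchanged on $Y_f$, does not increase the $H^1$-seminorm by Stampacchia's chain rule, and trivially enforces the sup-norm bound. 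A construction of Acerbi--Chiad\`o Piat--Dal Maso--Percivale type provides an alternative linear operator with the same properties.

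For the global extension, given $v \in H^1(\Oeaf)$ and $k \in \Ke$, I set $v_k(y) := v(\eps(y+k))$ on $Y_f$ and define
\begin{equation*}
(E_\eps v)(x) := (E_0 v_k)\!\left(\tfrac{x}{\eps}-k\right), \qquad x \in \eps(Y+k).
\end{equation*}
Because $\partial Y \subset \partial Y_f$, the trace of $E_0 v_k$ on $\partial Y$ coincides with that of $v_k$ itself, so on each shared face between $\eps(Y+k)$ and $\eps(Y+k')$ both local extensions agree with $v$ and the pieces glue continuously, yielding $E_\eps v \in H^1(\Oea)$. A change of variables on each cell gives $\|E_\eps v\|_{L^2(\eps(Y+k))}^2 = \eps^n \|E_0 v_k\|_{L^2(Y)}^2 \le C \|v\|_{L^2(\eps(Y_f+k))}^2$ and analogously $\|\nabla E_\eps v\|_{L^2(\eps(Y+k))}^2 = \eps^{n-2} \|\nabla E_0 v_k\|_{L^2(Y)}^2 \le C \|\nabla v\|_{L^2(\eps(Y_f+k))}^2$, the $\eps$-powers from the cell measure and from the gradient scaling cancelling correctly. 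Summing over $\Ke$ produces the global Sobolev estimates, while the $L^\infty$-bound follows cell by cell from the reference one without any scaling factor.

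The main technical point is the simultaneous preservation of all three norms at the cell level: the Sobolev bounds on $E_0$ are standard, but the $L^\infty$-control is what requires the extra truncation step (or an ad hoc reflection construction). Once $E_0$ is in hand, the scaling and gluing are routine because the perforations are strictly interior to the cells and the thin-layer geometry is compatible with the unit-cell periodicity ($\eps^\alpha/\eps \in \N$).
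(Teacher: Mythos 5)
Your construction hinges on the assumption that $Y_s$ is strictly interior to $Y$, so that $\partial Y \subset \partial Y_f$ and the cell-by-cell extensions glue automatically along shared faces. This assumption is not made in the paper. Section~\ref{sec:micro_domain} only assumes that $Y_f$ and $Y_s$ are open, connected Lipschitz sets with $Y_f\cap Y_s=\emptyset$; nothing forces $\overline{Y_s}$ away from $\partial Y$. Indeed, the cylindrical-inclusion case of Section~\ref{sec:cylindral_inclusions}, which the paper treats as a \emph{special case} of its general setting, has $Y_s = Y_s'\times(0,1)$ touching the top and bottom faces of the unit cell, so $\partial Y\not\subset\partial Y_f$ there. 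The paper's reference to Acerbi--Chiad\`o Piat--Dal Maso--Percivale is not decorative: that theorem was developed precisely to handle solid structures that meet or cross cell boundaries, where the naive cell-by-cell gluing breaks down. Once $\overline{Y_s}$ meets $\partial Y$, the trace of $E_0 v_k$ on $\partial Y\cap\overline{Y_s}$ is an artifact of the reference extension, not determined by $v$; two neighbouring cells will in general produce incompatible traces on the shared face, and the glued function is not in $H^1(\Oea)$. Acerbi et al.\ circumvent this by extending on enlarged, overlapping neighbourhoods of each cell and matching mean values across the overlaps, and it is this more delicate construction (adapted unchanged to the thin layer, since the cells fit without overhangs thanks to $\eps^{\alpha}/\eps\in\N$) that the paper is invoking.

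Two smaller remarks. First, a plain Stein/Calder\'on extension gives $\|\nabla E_0 v\|_{L^2(Y)}\le C\|v\|_{H^1(Y_f)}$, not the pure seminorm bound $\|\nabla E_0 v\|_{L^2(Y)}\le C\|\nabla v\|_{L^2(Y_f)}$ that you need for the $\eps$-scaling to come out right; you have to first subtract the mean of $v$ over $Y_f$, extend, and add the mean back, using Poincar\'e--Wirtinger on $Y_f$. You flag the Acerbi construction as an ``alternative,'' but it is in fact necessary even for the seminorm bound. Second, your idea for the $L^\infty$-estimate---post-composing with the truncation $T_M$---is sound and is essentially the argument behind the cited Lemma~A.3 in the paper's reference, so that part of the proposal is fine once the underlying $H^1$-extension is built correctly.
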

\begin{proof}
    This result can be shown as in \cite{Acerbi1992}. The fact that that here we deal with a thin layer with thickness of order $\eps^{\alpha}$ has no influence. For the inequality for the $L^{\infty}$-bound, we refer to \cite[Lemma A.3]{bhattacharya2022homogenization}.
\end{proof}

Next, we construct a domain $\Oeah^f$ for $h$ small, which is obtained by cutting of micro-cells from $\Oeaf$ near to the lateral boundary (distance smaller than $h$). More precisely, we introduce the following notation: For $0 < h \ll 1$ let  $\Sigma_h := \left\{x \in \Sigma\, : \, \mathrm{dist}\{x,\partial \Sigma\} > h\right\}$ and we set
\begin{align*}
    K_{\eps,h}:= \left\{ k \in \Z^{n-1} \, : \, \eps (k + (0,1)^{n-1}) \subset \Sigma_h \right\}
\end{align*}
and define
\begin{align*}
\Sigma_{\eps,h} := \mathrm{int}\left\{\bigcup_{k \in K_{\eps,h}} \eps( [0,1]^{n-1} + k)\right\}.
\end{align*}
In other words, $\Sigma_{\eps,h}$ consists of all points in $x$ with distance greater than $h$ and included in a microscopic cell $\eps(k + [0,1]^{n-1})$ strictly contained in $\Sigma_h$. Now, we define 
\begin{align*}
    \Oeah:= \Sigma_{\eps,h} \times (-\eps^{\alpha},\eps^{\alpha}), \qquad \Oeah^f:= \Oeah \cap \Oeaf.
\end{align*}

\begin{proposition}\label{prop:estimate_shifts}
We obtain for every $0 < h \ll 1$ a constant $C_h>0$ depending on $h$ (but independent of $\eps$), such that for every $l \in \Z^{n-1}\times \{0\}$ and $|l\eps|<h$ it holds that
\begin{align*}
\eps^{-\frac{\alpha}{2}} \| \delta \cea\|_{L^{\infty}((0,T),L^2(\Oea^f)} \le  C\sqrt{h} + C_h\eps^{\frac{\alpha}{2}} + \kappa(|l\eps|),
\end{align*}
with $\kappa(s) \rightarrow 0$ for $s\to 0$. In the case \ref{case:diffusion_high} with periodic boundary conditions (after extending $\cea$ periodically in $\Sigma$-direction), the inequality is valid for $h=0$ and arbitrary $l$ and $\eps$, and the constant on the right-hand side is independent of $h$. 

\end{proposition}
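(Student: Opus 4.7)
The idea is to split $\Oeaf = \Oeah^f \cup (\Oeaf \setminus \Oeah^f)$ and deal with the two pieces separately: on the thin boundary strip we only need the $L^\infty$--control from Lemma~\ref{lem:apriori_Linfty}, while on the interior $\Oeah^f$ we derive and test an equation for the horizontal shift $\delta\cea$. On the outer strip $\Oeaf \setminus \Oeah^f$, which has Lebesgue measure of order $h\eps^\alpha$, the $L^\infty$--bound (preserved by the extension operator from Lemma~\ref{lem:extension_operator} and the subsequent mirror extension) immediately gives
$\|\delta\cea\|_{L^\infty((0,T),L^2(\Oeaf\setminus\Oeah^f))} \le C\sqrt{h}\,\eps^{\alpha/2}$, which after dividing by $\eps^{\alpha/2}$ accounts for the $C\sqrt{h}$--term. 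In the periodic case \ref{case:diffusion_high} this step is unnecessary since $\cea$ extends $\Sigma$--periodically.

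For the interior estimate, we pick a smooth cut-off $\eta_h \in C_c^\infty(\R^{n-1})$ with $\eta_h=1$ on $\Sigma_{\eps,h}$, support inside a slightly larger set, and $|\nabla_{\ox}\eta_h|\le C/h$. Subtracting the weak formulation $\eqref{eq:weak_transport_wea}$ at $x$ from the same formulation evaluated at $x+l\eps$ yields, for $|l\eps|<h$, an equation for $\delta\wea := \delta\cea - \delta\ceps^b$ that is valid on the support of $\eta_h$ (both $x$ and $x+l\eps$ lie in $\Oeaf$ there, by our assumption $\eps^\alpha/\eps\in\N$ and the horizontal periodicity of the perforations). Testing this equation with $\eta_h^2 \delta\wea$ and integrating in time produces the key identity
\begin{align*}
\frac{1}{2\eps^\alpha}\|\eta_h\delta\wea(t)\|_{L^2(\Oeaf)}^2 + \int_0^t\!\int_{\Oeaf} \eta_h^2 \,\Dea\nabla\delta\wea\cdot\nabla\delta\wea \dd x \dd s = \mathrm{(I)} + \mathrm{(II)} + \mathrm{(III)} + \mathrm{(IV)},
\end{align*}
where $\mathrm{(I)}$ collects the cut-off error from the diffusive term (with factor $\nabla\eta_h$), $\mathrm{(II)}$ is the convective contribution, $\mathrm{(III)}$ contains the shifts of the source data $g_{\eps,\alpha},\,\partial_t\ceps^b,\,\ceps^b,\,\nabla\ceps^b$, and $\mathrm{(IV)}$ is the contribution of the Stokes data entering through $\uea$.

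The main obstacle is the convective term $\mathrm{(II)}$. Using $\delta(\uea\cea) = \delta\uea\cdot\cea(\cdot+l\eps) + \uea\cdot\delta\cea$, integration by parts together with $\nabla\cdot\uea=0$ and the fact that $\uea$ vanishes on $\Gea$ kills the part containing $\uea\,\delta\wea\,\nabla(\eta_h^2\delta\wea)$ up to a cut-off error involving $\nabla\eta_h$ which is absorbed into a $C_h\eps^{\alpha/2}$--term through Young's inequality together with the a priori estimate for $\uea$. The remaining piece, involving $\delta\uea$, is controlled by combining the $L^\infty$--bound from Lemma~\ref{lem:apriori_Linfty} for $\cea(\cdot+l\eps)$ with a shift estimate for $\uea$ itself: $\delta\uea$ solves a Stokes problem of the type $\eqref{eq:Stokes_micro_strong}$ on the interior of the cut-off support with data $\delta f_{\eps,\alpha}$ and $\delta\pea^b$, so Proposition~\ref{prop:apriori_fluid_velocity} combined with \ref{ass:shift_f} yields $\eps^{-1}\|\delta\uea\|_{L^2} + \|\nabla\delta\uea\|_{L^2} \le C\kappa(|l\eps|)\eps^{\alpha/2+1}$, which produces the desired $\kappa(|l\eps|)$--contribution. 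The source term $\mathrm{(III)}$ is bounded directly by \ref{ass:shift_f} and Young's inequality.

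After absorbing the diffusive cross--terms (which produce factors of $\|\nabla\eta_h\|_\infty\le C/h$ that end up as $C_h\eps^{\alpha/2}$) into the left-hand side, applying Gronwall and taking a supremum in $t\in(0,T)$ we arrive at
\begin{align*}
\eps^{-\alpha/2}\|\eta_h\delta\wea\|_{L^\infty((0,T),L^2(\Oeaf))} \le C_h\eps^{\alpha/2} + \kappa(|l\eps|),
\end{align*}
which, combined with the boundary-strip estimate of Step 1 and the trivial $L^\infty$--bound on $\delta\ceps^b$ (reducing $\delta\wea$ to $\delta\cea$ via \ref{ass:shift_f}), yields the claim. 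In the periodic case \ref{case:diffusion_high} the cut-off $\eta_h$ is not needed (the equation for $\delta\wea$ holds on all of $\Oeaf$ after periodic extension), so the same argument run with $\eta_h\equiv 1$ produces the cleaner bound with $h=0$.
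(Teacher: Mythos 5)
Your overall strategy matches the paper's: estimate the boundary strip $\Oeaf\setminus\Oeah^f$ via the $L^\infty$ bound to get the $C\sqrt{h}$ term, test the shifted equation with $\eta_h^2\,\delta\wea$ in the interior, kill the main convective piece via $\nabla\cdot\uea=0$ and the zero boundary data, use assumption \ref{ass:shift_f} for the data shifts, and drop the cut-off in the periodic case \ref{case:diffusion_high}. The place where your argument has a genuine gap is the bound for $\delta\uea$. You claim that $\delta\uea$ ``solves a Stokes problem of the type \eqref{eq:Stokes_micro_strong} on the interior of the cut-off support'' and then invoke Proposition~\ref{prop:apriori_fluid_velocity} together with \ref{ass:shift_f} to conclude $\eps^{-1}\|\delta\uea\|_{L^2}+\|\nabla\delta\uea\|_{L^2}\le C\kappa(|l\eps|)\eps^{\alpha/2+1}$. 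This is not correct as stated: on the lateral boundary of $\Oeah^f$ (where the cut-off transitions), $\delta\uea$ does not vanish, so the hypotheses of Proposition~\ref{prop:apriori_fluid_velocity} (and the underlying Poincar\'e and Bogovskii constructions, which both rely on zero Dirichlet data except on the top/bottom) fail on that sub-domain. You cannot transplant the a priori estimate to $\delta\uea$ on $\Oeah^f$ by simply viewing it as a Stokes solution there.

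The correct step is to subtract the two weak Stokes formulations, which yields an identity valid for test functions vanishing outside $\Oeah^f\cup\Gea$, and then test with $\eta^2\delta\uea$ directly (exactly as one did for $\delta\cea$). The pressure term appears as $\int(\delta\pea-\delta\pea^b)\nabla\cdot(\eta^2\delta\uea)$ and can be controlled using the a priori bound for $\pea$ on the full domain $\Oeaf$ together with \ref{ass:shift_f}, without any new Bogovskii construction. This procedure necessarily produces cut-off error terms carrying a factor $\nabla\eta$, and the resulting estimate is of the form
\begin{align*}
\eps^{-2}\|\eta\,\delta\uea\|_{L^2(\Oeah^f)}+\eps^{-1}\|\eta\,\nabla\delta\uea\|_{L^2(\Oeah^f)}\le C_h\,\eps^{\alpha}+C\,\eps^{\alpha/2}\,\kappa(|l\eps|),
\end{align*}
containing the additional $C_h\eps^{\alpha}$ term that your stated bound omits. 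Since this extra term gets absorbed into the $C_h\eps^{\alpha/2}$ contribution after substituting back into \eqref{ineq:aux_difference_shifts}, the final inequality you assert is unaffected, but the intermediate step needs to be reworked along these lines.
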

\begin{proof}
We first consider the case \ref{case:diffusion_low} with $\Dea = \eps^{\alpha}D$. We use similar ideas as in the proof of \cite[Lemma 4.3]{GahnNeussRaduKnabner2018a}, where here we have to estimate additionally the convective term. We define the space
\begin{align*}
    \spaceH_{\eps,h}:= \left\{ \phi \in H^1(\Oeah^f) \, : \, \phi = 0 \mbox{ on } \partial \Oeah^f \setminus \Gamma_{\eps,\alpha}  \right\}.
\end{align*}

Let $l\in \Z^{n-1}\times \{0\}$, such that $|l\eps|<h$.
It is easy to check, that for all $\psiea \in \spaceH_{\eps,h}$ it holds almost everywhere in $(0,T)$ that 
\begin{align}
\begin{aligned}\label{eq:aux_estimate_shifts_basic}
\frac{1}{\eps^{\alpha}} & \langle \partial_t \delta \cea , \psiea\rangle_{\spaceH_{\eps,h}} + \int_{\Oeah^f} \eps^{\alpha} D \nabla \delta \cea \cdot \nabla \psiea \dd x
\\
&- \int_{\Oeah^f} \frac{\delta(\uea \cea)}{\eps^2} \cdot \nabla \psiea \dd x
= \frac{1}{\eps^{\alpha}} \int_{\Oeah^f} \delta g_{\eps,\alpha} \psiea \dd x.
\end{aligned}
\end{align}
First, we assume that $\ceps^b = 0$.
We choose a cut-off function $\eta \in C_0^{\infty}(\Sigma_h)$ with $0\le \eta \le 1$  and $\eta = 1$ in $\Sigma_{2h}$. We emphasize that $\eta$ is depending on $h$ and in particular the gradient is of order $\frac{1}{h}$. In the following, we denote by $C_h$ constants which depend on $h$ (and might grow to $\infty$ for $h \to 0$). Now, we choose $\psiea = \eta^2 \delta \cea  \in \spaceH_{\eps,\alpha}$ and we have
\begin{align*}
    \nabla \psiea = \eta \nabla (\eta \delta \cea) + \eta \delta \cea \nabla \eta = 2 \eta \delta \cea  \nabla \eta + \eta^2 \nabla \delta \cea. 
\end{align*}
We get for all $t \in [0,T]$
\begin{align}
\begin{aligned}\label{eq:aux_estimate_shifts}
\frac{1}{2\eps^{\alpha}}&  \|\eta \delta \cea (t) \|^2_{L^2(\Oeah^f)} + \eps^{\alpha} D\|\eta \nabla \delta \cea \|_{L^2((0,t)\times \Oeah^f)}^2 + 2 \eps^{\alpha} D \int_0^t\int_{\Oeah^f}  \nabla \delta \cea \cdot \nabla \eta \eta \delta \cea  \dd x\dd s
\\
&- \int_0^t\int_{\Oeah^f} \frac{\delta(\uea \cea)}{\eps^2} \cdot \nabla (\eta^2 \delta \cea ) \dd x\dd s = \frac{1}{\eps^{\alpha}} \int_0^t \int_{\Oeah^f} \delta g_{\eps,\alpha} \eta^2 \delta\cea \dd x\dd s.
\end{aligned}
\end{align}
For the third term on the left-hand side we get
\begin{align*}
 \eps^{\alpha}\int_0^t   \int_{\Oeah^f}  \nabla \delta \cea \cdot \nabla \eta \eta \delta \cea  \dd x\dd s &\le \frac{1}{\eps^{\alpha}} \|\eta \delta \cea\|_{L^2((0,t)\times \Oeah^f)}^2 + C_h \eps^{3\alpha} \| \nabla \delta \cea\|_{L^2((0,t)\times \Oeah^f)}^2 
 \\
 &\le \frac{1}{\eps^{\alpha}} \|\eta \delta \cea\|_{L^2((0,t)\times \Oeah^f)}^2 + C_h \eps^{2\alpha} ,
\end{align*}
where in the last inequality we used the a priori estimate for $\nabla \cea$ from Proposition \ref{prop:apriori_concentration}.
For the convective term, we use 
\begin{align*}
\int_{\Oeah^f} \frac{\uea}{\eps^2} \nabla (\eta \delta \cea )^2 \dd x = 0,
\end{align*}
to obtain with $\delta(\uea \cea ) = \cea \delta \uea + \uea(\cdot + l\eps ) \delta \cea$  and the a priori estimates from Proposition \ref{prop:apriori_fluid_velocity}, \ref{prop:apriori_concentration} and \ref{prop:apriori_concentration_diff_nabla}
\begin{align*}
\bigg| \int_0^t &\int_{\Oeah^f} \frac{\delta(\uea \cea)}{\eps^2} \cdot \nabla \psiea \dd x\dd s  \bigg|
\\
\le& \left| \int_0^t \int_{\Oeah^f} \frac{\delta \uea}{\eps^2} \cea \cdot \left[ 2 \eta \delta \cea \nabla \eta + \eta^2 \nabla \delta\cea \right] + \frac{\uea}{\eps^2} \delta \cea \left[ \eta \nabla (\eta \delta \cea) + \eta \delta \cea \nabla \eta \right] \dd x\dd s \right|
\\
\le& C_h \left\|\frac{\delta \uea }{\eps^2}\right\|_{L^2((0,t)\times \Oeah^f)} \|\cea \|_{L^{\infty}((0,T)\times \Oeah^f)} \|\eta \delta \cea \|_{L^2((0,t)\times \Oeah^f)} 
\\
&+ C \left\|\frac{\eta \delta \uea }{\eps^2}\right\|_{L^2((0,t)\times \Oeah^f)} \|\cea \|_{L^{\infty}((0,T)\times \Oeah^f)} \|\eta \nabla \delta \cea \|_{L^2((0,t)\times \Oeah^f)}
\\
&+ C_h \left\|\frac{ \uea }{\eps^2}\right\|_{L^2((0,t)\times \Oeah^f)} \|\delta\cea \|_{L^{\infty}((0,T)\times \Oeah^f)} \|\eta \delta \cea \|_{L^2((0,t)\times \Oeah^f)} 
\\
\le& C_h \eps^{\frac{\alpha}{2}} \|\eta \delta \cea \|_{L^2((0,t)\times \Oeah^f)}  + C \left\| \frac{\eta \delta \uea }{\eps^2} \right\|_{L^2((0,t)\times \Oeah^f)}\|\eta \nabla \delta \cea \|_{L^2((0,t)\times \Oeah^f)} \\
&+ C_h \eps^{\frac{\alpha}{2}} \|\eta \delta \cea \|_{L^2((0,t)\times \Oeah^f)}
\\
\le& C_h \eps^{\alpha} + \frac{1}{\eps^{\alpha}} \|\eta \delta \cea \|_{L^2((0,t)\times \Oeah^f)}^2 + \frac{C}{\eps^{\alpha}} \left\| \frac{\eta \delta \uea}{\eps^2}\right\|_{L^2((0,t)\times \Oeah^f)}^2 + \frac{D \eps^{\alpha}}{2} \|\eta \nabla \delta \cea \|_{L^2((0,t)\times \Oeah^f)}^2.
\end{align*}
The term on the right-hand side of $\eqref{eq:aux_estimate_shifts}$ can be estimated in a similar way. In total we get with an absorption argument and the Gronwall inequality
\begin{align}
\begin{aligned}\label{ineq:aux_difference_shifts}
    \frac{1}{\eps^{\frac{\alpha}{2}}} &\|\eta \delta \cea\|_{L^{\infty}((0,T),L^2(\Oeah^f)} + \eps^{\frac{\alpha}{2}} \|\eta \nabla \delta \cea \|_{L^2((0,T)\times \Oeah^f)}
    \\
    &\le C_h \eps^{\frac{\alpha}{2}} + \frac{C}{\eps^{\frac{\alpha}{2}}} \left\|\frac{\eta \delta \uea }{\eps^2} \right\|_{L^2((0,T)\times \Oeah^f)} + \frac{C}{\eps^{\frac{\alpha}{2}}} \|\delta g_{\eps,\alpha}\|_{L^2((0,T)\times \Oeah^f)}  .
\end{aligned}
\end{align}
It remains to estimate the term including $\delta \uea$. For this, we consider the equation for $\delta \uea$. More precisely, for all $\phiea \in H^1(\Oeah^f)^n$, such that $\phiea = 0$ on $\partial \Oeah^f \setminus (\Sea^+ \cup \Sea^-)$, it holds that 
\begin{align*}
\int_{\Oeah^f} \nabla \delta \uea : \nabla \phiea \dd x - \int_{\Oeah^f} (\delta \pea - \delta\pea^b) \nabla \cdot \phiea \dd x = \int_{\Oeah^f} (\delta f_{\eps,\alpha} - \nabla \delta \pea^b) \cdot \phiea \dd x.
\end{align*}
Now, we choose $\phiea = \eta^2 \delta \uea $ to obtain with $\nabla \phiea = \eta^2 \nabla \delta \uea + 2\eta \nabla \eta \otimes \delta \uea $ and $\nabla \cdot \phiea = 2\eta \nabla \eta \cdot \delta \uea$
\begin{align*}
\|\eta \nabla \delta \uea \|_{L^2(\Oeah^f)}^2 =& \int_{\Oeah^f} (\delta \pea - \delta \pea^b) 2\eta \nabla \eta \cdot \delta \uea \dd x + \int_{\Oeah^f}( \delta f_{\eps,\alpha} - \nabla \delta \pea^b) \cdot \delta \uea \eta^2 \dd x 
\\
&- 2\int_{\Oeah^f} \eta \nabla \delta \uea : (\nabla \eta \otimes \delta \uea ) \dd x 
\\
\le& C_h \|\delta \pea - \delta \pea^b\|_{L^2(\Oeah^f)} \|\delta\uea \|_{L^2(\Oeah^f)} 
\\
&+ C \| \delta f_{\eps,\alpha} - \nabla \delta \pea^b\|_{L^2(\Oeah^f)}  \|\eta \delta \uea \|_{L^2(\Oeah^f)}\\
&+ C_h \|\eta \nabla \delta \uea \|_{L^2(\Oeah^f)} \|\delta \uea \|_{L^2(\Oeah^f)}
\\
\le& C_h \eps^{2 + 2\alpha} + C \eps^{2 + \alpha } \kappa(|l\eps|) + C_h \eps^{2 + \frac{\alpha}{2}} \|\eta \nabla \delta \uea \|_{L^2(\Oeah^f)},
\end{align*}
where at the end we used the a priori estimates for $\uea$ and $\pea$ from Proposition \ref{prop:apriori_fluid_velocity} and \ref{prop:apriori_fluid_pressure}, and the  assumption \ref{ass:shift_f}. For the last term we can use the Young inequality to obtain with an absorption argument
\begin{align*}
\|\eta \nabla \delta \uea \|_{L^2(\Oeah^f)}^2 \le C_h \eps^{2 + 2\alpha} + C \eps^{2 + \alpha } \kappa(|l\eps|) 
\end{align*}
Using the Poincar\'e inequality from Lemma \ref{lem:Poincare}, we get
\begin{align*}
\|\eta \delta \uea\|_{L^2(\Oeah^f)} &\le C \eps \left( \|\delta \uea \nabla \eta \|_{L^2(\Oeah^f)} + \|\eta \nabla \delta \uea \|_{L^2(\Oeah^f)}   \right) 
\\
&\le C_h \eps^{3 + \frac{\alpha}{2}} + C_h \eps^{2 + \alpha} + C \eps^{2 + \frac{\alpha}{2}} \kappa(|l\eps|)
\end{align*}
In summary, we obtain
\begin{align}
    \eps^{-2} \|\eta \delta \uea\|_{L^2(\Oeah^f)} + \eps^{-1} \|\eta \nabla \delta \uea \|_{L^2(\Oeah^f)} \le C_h \eps^{\alpha} + C \eps^{\frac{\alpha}{2}} \kappa (|l\eps|).
\end{align}
Using this estimate in inequality $\eqref{ineq:aux_difference_shifts}$, we get with the assumption \ref{ass:shift_f}
\begin{align*}
   \frac{1}{\eps^{\frac{\alpha}{2}}} &\|\eta \delta \cea\|_{L^{\infty}((0,T),L^2(\Oeah^f)} + \eps^{\frac{\alpha}{2}} \|\eta \nabla \delta \cea \|_{L^2((0,T)\times \Oeah^f)}    \le C_h\eps^{\frac{\alpha}{2}} + \kappa(|l\eps|) .
\end{align*}
Using the $L^\infty$-estimate for $\cea$ from Lemma \ref{lem:apriori_Linfty} (and the fact that we consider here a smooth extension of $\cea$ to the whole layer $\R^{n-1}\times (-\eps^{\alpha},\eps^{\alpha})$), we obtain
\begin{align*}
\eps^{-\frac{\alpha}{2}} \| \delta \cea\|_{L^{\infty}((0,T),L^2(\Oea^f))} &\le \eps^{-\frac{\alpha}{2}} \|\eta \delta \cea\|_{L^{\infty}((0,T),L^2(\Oeaf \setminus \Omega_{\eps,\alpha,2h}^f))} +C \sqrt{h}
\\
&\le  C\sqrt{h} + C_h\eps^{\frac{\alpha}{2}} + \kappa(|l\eps|).
\end{align*}
This is the desired result for $\ceps^b = 0$. For the general case, we consider in the previous calculations $\wea$ instead of $\cea$. Then, we have to consider  in $\eqref{eq:aux_estimate_shifts}$ the function $g_{\eps,\alpha} - \partial_t \ceps^b$ instead of $g_{\eps,\alpha}$, and we obtain on the right-hand side the additional term
\begin{align*}
 \int_{\Oeaf} \left(\Dea\nabla  \ceps^b - \frac{\uea}{\eps^2} \ceps^b \right) \cdot \nabla (\eta^2 \cea) \dd x.
\end{align*}
This term can be estimated in the same way as the respective terms in the above calculation with $\ceps^b$ instead of $\cea$ and using the assumption \ref{ass:shift_f}. This finishes the proof in the case \ref{case:diffusion_low}.

Now, we consider the case \ref{case:diffusion_high}. Here, we can choose directly $\psiea = \delta \cea $ in $\eqref{eq:aux_estimate_shifts_basic}$ and we can work with the full domain $\Oeaf$ instead of $\Oeah^f$ (formally, we can choose $h=0$), since we can extend all the function periodically in the horizontal direction. In particular, in $\eqref{eq:aux_estimate_shifts}$ we can put $\eta = 0$ and all terms including $\nabla \eta$ vanish. The remaining terms can be estimate as in the case \ref{case:diffusion_high}.
\end{proof}

\begin{remark}\label{rem:estimate_shifts}\
\begin{enumerate}[label = (\roman*)]
    \item  We also showed that 
   \begin{align*}
        \eps^{\frac{\alpha}{2}} \| \nabla \delta \cea \|_{L^2((0,T)\times \Omega_{\eps,\alpha,2h}^f)}    \le C_h\eps^{\frac{\alpha}{2}} + \kappa(|l\eps|)
   \end{align*}
and an estimate for $\delta \uea$ and $\nabla \delta \uea$. However, for the proof of the strong convergence of $\cea$ this estimate is not necessary and therefore we only formulated the result for $\delta \cea$.

    \item The proof of Proposition \ref{prop:estimate_shifts} simplifies for the case of periodic boundary conditions. However, this assumption seems to be necessary in the case \ref{case:diffusion_high}. Otherwise (assuming also a Neumann-boundary condition), we get in $\eqref{eq:aux_estimate_shifts}$ the critical term
    \begin{align*}
        2\eps^{-\alpha} D \int_0^t \int_{\Oeah^f} \nabla_{\ox} \delta \cea \cdot \nabla\eta \eta \delta \cea \dd x\dd s.
    \end{align*}
    Using the same estimate as in the proof above, we only get
    \begin{align*}
        \left|  2\eps^{-\alpha} D \int_0^t \int_{\Oeah^f} \nabla_{\ox} \delta \cea \cdot \nabla\eta \eta \delta \cea \dd x\dd s \right| \le \frac{1}{\eps^{\alpha}} \|\eta \delta \cea \|_{L^2((0,t)\times \Oeah^f)}^2 + C_h.
    \end{align*}
    As we will see later in the proof of Proposition \ref{prop:strong_ts_different_scaling}, this is not enough to guarantee the strong two-scale convergence of $\cea$.
\end{enumerate}
  
\end{remark}

\subsection{Two-scale compactness for the microscopic solutions $\cea$}
\label{sec:compactness_transport}

We formulate the (two-scale) compactness results for the microscopic solution $\cea$ for the different choices of $\Dea$. The weak convergence results are direct consequences of the general two-scale compactness results obtained in Section \ref{sec:two_scale_compactness} and the a priori estimates in Section \ref{sec:apriori_wea}. However, to deal with the nonlinear advective term, we also need strong two-scale compactness results. For this, we use the additional bound for the differences of the shifts with respect to the spatial variable.
\\

In the following, we extend the functions $\cea$ with the extension operator $E_{\eps}$ from Lemma \ref{lem:extension_operator} to the whole thin layer $\Oea$ and use the same notation $\cea$ for the extension.  We emphasize that the a priori estimates in the case \ref{case:diffusion_high} for the extended function do not preserve, since we only have
\begin{align*}
    \|\nabla_{\ox} E_{\eps} \cea\|_{L^2((0,T)\times \Oea)} \le C \|\nabla \cea\|_{L^2((0,T)\times \Oeaf)} \le C\eps^{-\frac{\alpha}{2}}.
\end{align*}
Hence, due to the (arbitrary) shape of the perforations $Y_s$, we  can only control the horizontal gradient $\nabla_{\ox} E_{\eps} \cea$ by the full gradient $\nabla \cea$, including, in particular, the $n$-th component of $\cea$, scaling badly with respect to $\eps$.
We start with the formulation of the weak compactness results for the microscopic solution:
\begin{proposition}\label{prop:weak_two_scale_compactness_micro_sol_conc}\ 
\begin{enumerate}[label = (\roman*)]
    \item For $\Dea = \eps^{\alpha} D$  we have:
    
    There exist $c_0 \in L^2((0,T)\times \Omega)$ with $\partial_n c_0 \in L^2((0,T)\times \Omega)$ and $c_1 \in L^2((0,T)\times \Omega,H_{\per}^1(Y)/\R)$ such that up to a subsequence
\begin{align*}
   \chi_{\Oeaf} \cea \toa \chi_{Y_f} c_0,\qquad  \chi_{\Oeaf}\eps^{\alpha} \nabla \cea \toa \chi_{Y_f} \left( \partial_n c_0 e_n + \nabla_y c_1 \right).
\end{align*}
Further, we have $c_0 = c_0^b $ on $S_1^{\pm}$ in the (generalized) trace sense.

    \item For $\Dea = D \mathrm{diag}(\eps^{-\alpha},\ldots , \eps^{-\alpha},  \eps^{\alpha} ) \in \R^{n\times n}$ we have:
    \\
    There exist $c_0 \in L^2((0,T),H^1(\Omega))$ and $c_1 \in L^2((0,T)\times \Omega, H_{\per}^1(0,1)/\R)$ (the microscopic variable is the $y_n$-component), and $\bar{c}_1 \in L^2((0,T)\times \Omega,H^1_{\per,\nabla_{\oy}}(Y_f) )$ such that up to a subsequence
    \begin{align*}
    \chi_{\Oeaf}\cea \toa \chi_{Y_f} c_0 , \qquad \chi_{\Oeaf}(\nabla_{\ox} \cea , \eps^{\alpha} \partial_n \cea )\toa \chi_{Y_f} \left(\nabla c_0 + (\nabla_{\oy}\bar{c}_1,\partial_{y_n} c_1)\right).
    \end{align*}
    Further, we have $c_0 = c_0^b $ on $S_1^{\pm}$ in the  trace sense.
\end{enumerate}
\end{proposition}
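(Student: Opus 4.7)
The statement combines, for each choice of $\Dea$, a weak two-scale compactness result with an identification of the boundary trace on $S_1^{\pm}$. The plan is to feed the a priori bounds from Section~\ref{sec:apriori_wea} (together with the extension operator of Lemma~\ref{lem:extension_operator} when needed) into the general compactness results of Section~\ref{sec:two_scale_compactness}, and then to read off the Dirichlet trace via Proposition~\ref{prop:two_scale_trace} applied to $\wea = \cea - \ceps^b$. The time variable plays no active role, since all compactness statements of Section~\ref{sec:two_scale_compactness} admit the straightforward time-dependent reformulation noted at the end of that section.

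\textbf{Case (i), $\Dea = \eps^{\alpha}D$.} Here Proposition~\ref{prop:apriori_concentration} yields
$\|\cea\|_{L^\infty((0,T),L^2(\Oeaf))} + \eps^{\alpha}\|\nabla\cea\|_{L^2((0,T)\times\Oeaf)} \le C\eps^{\alpha/2}$.
I would extend $\cea$ to $\Oea$ by $E_{\eps}\cea$ using Lemma~\ref{lem:extension_operator}; the extension preserves both norms, so the hypothesis of Proposition~\ref{prop:compactness_v0_dn_v0} (in its time-dependent version) is satisfied. This directly produces $c_0 \in L^2((0,T)\times\Omega)$ with $\partial_n c_0 \in L^2((0,T)\times\Omega)$ and $c_1 \in L^2((0,T)\times\Omega,H^1_{\per}(Y)/\R)$ such that
$E_{\eps}\cea \toa c_0$ and $\eps^{\alpha}\nabla E_{\eps}\cea \toa \partial_n c_0\, e_n + \nabla_y c_1$. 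Multiplying the test function by $\chi_{Y_f}(y)$ in the two-scale pairing and using that the extension agrees with $\cea$ on $\Oeaf$ yields the claimed convergences for $\chi_{\Oeaf}\cea$ and $\chi_{\Oeaf}\eps^{\alpha}\nabla\cea$.

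\textbf{Case (ii), $\Dea = D\,\mathrm{diag}(\eps^{-\alpha},\dots,\eps^{-\alpha},\eps^{\alpha})$.} The anisotropic bound from Proposition~\ref{prop:apriori_concentration_diff_nabla} is exactly what the hypothesis of Proposition~\ref{prop:compactness_v0_nabla_v0} requires on $\Oeaf$. Here the Acerbi-type extension is not useful because it would only control $\nabla_{\ox} E_{\eps}\cea$ by the \emph{full} gradient of $\cea$ on $\Oeaf$, destroying the horizontal-vertical separation. Applying Proposition~\ref{prop:compactness_v0_nabla_v0} (time-dependent version) directly to $\cea \in L^2((0,T),H^1(\Oeaf))$ produces $c_0 \in L^2((0,T),H^1(\Omega))$, $c_1$ depending only on $y_n$, and the corrector $\bar c_1$, together with the stated two-scale convergences.

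\textbf{Identification of the trace on $S_1^{\pm}$.} I would apply Proposition~\ref{prop:two_scale_trace} to $E_{\eps}\wea$ with $\wea = \cea - \ceps^b$. Since $\wea = 0$ on $\Sea^{\pm}$ and the extension acts only horizontally (cell by cell, not affecting $S_{\eps}^{\pm}$), one has $E_{\eps}\wea|_{S_{\eps}^{\pm}} = 0$. By assumption, $E_{\eps}\cea \toa c_0$ and $E_{\eps}\ceps^b \toa c_0^b$, so $E_{\eps}\wea \toa c_0 - c_0^b$, with the scaled gradients bounded suitably. Proposition~\ref{prop:two_scale_trace} then gives $0 = (c_0 - c_0^b)|_{S_1^{\pm}}$, i.e.\ $c_0 = c_0^b$ on $S_1^{\pm}$. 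In case~(ii) the same conclusion follows by an analogous trace argument, using the $H^1(\Omega)$-regularity of $c_0$ that has already been established, together with a direct integration-by-parts computation against test functions in $C^\infty_0(\Omega \cup S_1^{\pm}, C^\infty_{\per}(Y))$.

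\textbf{Main obstacle.} The delicate point is that the extension $E_{\eps}$ is not a priori compatible with Dirichlet values on $\Sea^{\pm}$. I would resolve this by noting that the Acerbi construction acts by reflection inside horizontal micro-cells, so traces on the top and bottom faces are preserved; alternatively, one can bypass the extension altogether in the trace step by mimicking the proof of Proposition~\ref{prop:two_scale_trace} directly on the perforated domain, integrating by parts against a test function supported near $S_1^{\pm}$ and using the divergence structure of the two-scale gradient limit.
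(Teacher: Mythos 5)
Your identification of the weak two-scale limits is correct and follows the paper's route exactly: Proposition \ref{prop:apriori_concentration} plus extension plus Proposition \ref{prop:compactness_v0_dn_v0} in case~(i), and Proposition \ref{prop:apriori_concentration_diff_nabla} plus Proposition \ref{prop:compactness_v0_nabla_v0} applied directly on $\Oeaf$ in case~(ii), with the correct observation that the Acerbi extension is useless in case~(ii). The trace step, however, has a genuine gap. You assert that $E_{\eps}\wea|_{S_{\eps}^{\pm}} = 0$ because ``the extension acts only horizontally, cell by cell, not affecting $S_{\eps}^{\pm}$.'' This is false, and the paper is explicit on precisely this point: the extension does \emph{not} vanish on the whole of $S_{\eps}^{\pm}$. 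The zero trace of $\cea - \ceps^b$ is only available on $\Sea^{\pm} = \partial\Oeaf \cap (\Sigma\times\{\pm\eps^{\alpha}\})$, the fluid portion of the top/bottom, whereas $S_{\eps}^{\pm} = \Sigma\times\{\pm\eps^{\alpha}\}$ also contains the solid portion $\partial\Oeas$. On that solid portion the Acerbi reflection prescribes values of $E_{\eps}\wea$ that are determined by fluid values in the interior of the boundary cells and have no reason to vanish. Your claim that the reflection is ``horizontal, cell by cell'' misreads the construction: the reflection is across the fluid--solid interface $\Gamma$ within each micro-cell, and in a cell abutting $S_{\eps}^{\pm}$ this interface need not be vertical, so the extended values land on $S_{\eps}^{\pm}\cap\partial\Oeas$ and are nonzero in general.

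The paper repairs this by \emph{estimating} rather than identifying the trace: taking the boundary layer $\Oea^{b,\pm}$ of $\eps$-cells touching $S_{\eps}^{\pm}$, the standard $\eps$-scaled trace inequality gives $\|\wea\|_{L^2(S_{\eps}^{\pm})} \le C(\eps^{-1/2}\|\wea\|_{L^2(\Oea^{b,\pm})} + \eps^{1/2}\|\nabla\wea\|_{L^2(\Oea^{b,\pm})})$; the Poincar\'e inequality in $\Oea^{b,\pm}$, which uses the zero trace on the \emph{fluid} part $\Sea^{\pm}$, eliminates the first term; and the a priori bound $\|\nabla\wea\|_{L^2(\Oea)}\le C\eps^{-\alpha/2}$ (from Lemma~\ref{lem:extension_operator} and Propositions~\ref{prop:apriori_concentration},~\ref{prop:apriori_concentration_diff_nabla}) yields $\|\wea\|_{L^2(S_{\eps}^{\pm})}\le C\eps^{(1-\alpha)/2}\to 0$ since $\alpha<1$. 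This strong convergence to zero, combined with Proposition~\ref{prop:two_scale_trace} applied to $E_{\eps}\cea$ and $E_{\eps}\ceps^b$ separately, is what actually closes the argument. Your closing remark about bypassing the extension and integrating by parts near $S_1^{\pm}$ points at something that could work, but it is not carried out and cannot stand in for the missing estimate.
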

\begin{proof}
The convergence results are a direct consequence of the a priori estimates from Proposition \ref{prop:apriori_concentration} and \ref{prop:apriori_concentration_diff_nabla}, and the compactness results from Proposition \ref{prop:compactness_v0_dn_v0} and \ref{prop:compactness_v0_nabla_v0}. 
It remains to establish the trace condition $c_0 = c_0^b $ on $S_1^{\pm}$. We use again the notation $\wea := E_{\eps}(\cea - \ceps^b)  $ (here we use explicity the extension operator $E_{\eps}$ to better distinguish between the extended functions, and the functions itself). We emphasize, that in general we do not have $\wea = 0$ on the whole boundary $S_{\eps}^{\pm}$. However, denoting by $\Oea^{b,\pm}$ the subset of $\Oea$ consisting of micro-cells touching the outer boundary $S_{\eps}^{\pm}$, then we obtain using the standard trace inequality for $\eps$-periodic domains as well as the Poincar\'e-inequlity (use $\wea = 0$ on $\Sea^{\pm}$)
\begin{align*}
    \|\wea\|_{L^2((0,T)\times S_{\eps}^{\pm})} &\le C \left( \frac{1}{\sqrt{\eps}} \|\wea\|_{L^2((0,T)\times \Oea^{b,\pm})}  + \sqrt{\eps} \|\nabla \wea\|_{L^2((0,T)\times \Oea^{b,\pm})}  \right) 
    \\
    &\le C \sqrt{\eps} \|\nabla \wea\|_{L^2((0,T)\times \Oea)}   \le C\eps^{\frac12 - \frac{\alpha}{2}}.
\end{align*}
Since $\alpha<1$, we get the strong two-scale convergence of $\wea$ to $0$ on $S_{\eps}^{\pm}$. Obviously, we have $E_{\eps} \ceps^b \toa c_0^b$, and therefore Proposition \ref{prop:two_scale_trace} implies the desired result.
\end{proof}

It remains to establish the strong two-scale convergence  of $\cea$. First, we consider the case $\Dea = D\mathrm{diag}(\eps^{-\alpha},\ldots,\eps^{-\alpha},\eps^{\alpha})$, which can be seen as the more simple case, since no additional assumptions on the date (see assumption \ref{ass:shift_f}) are necessary. Further, the argument is less technical, because we can apply directly the Simon compactness result from \cite{Simon}.

\begin{proposition}\label{prop:strong_ts_different_scaling} It holds up to a subsequence
\begin{align*}
    \cea \stoa c_0.
\end{align*}
\end{proposition}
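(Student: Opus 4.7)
The plan is to apply Simon's compactness theorem to a suitably rescaled sequence on the fixed domain $\Omega$. Define $\hat{c}_{\eps,\alpha}(t,x) := \cea(t,\ox,\eps^{\alpha}x_n)$ on $(0,T)\times\Omega$, where $\cea$ denotes the $E_{\eps}$-extension to $\Oea$ as already agreed in the text preceding the proposition. A change-of-variables combined with the a priori bounds from Proposition \ref{prop:apriori_concentration_diff_nabla} immediately shows that $\hat{c}_{\eps,\alpha}$ is uniformly bounded in $L^2((0,T)\times\Omega)$ and that $\|\partial_{x_n}\hat{c}_{\eps,\alpha}\|_{L^2((0,T)\times\Omega)} \le C$: the chain-rule factor $\eps^{\alpha}$ absorbs the bad scaling $\eps^{\alpha}\|\partial_n\cea\|_{L^2(\Oeaf)} \le C\eps^{\alpha/2}$, giving equicontinuity in the $x_n$-direction for free.

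Horizontal equicontinuity cannot come from a gradient bound, since $\|\nabla_{\ox}E_{\eps}\cea\|_{L^2(\Oea)}$ is only controlled by the full gradient of $\cea$, which scales as $\eps^{-\alpha/2}$; instead it must be extracted from the shift estimate of Proposition \ref{prop:estimate_shifts}. In the periodic case \ref{case:diffusion_high} that proposition yields $\eps^{-\alpha/2}\|\delta\cea\|_{L^{\infty}((0,T),L^2(\Oeaf))} \le C\eps^{\alpha/2} + \kappa(|l\eps|)$ for arbitrary $l \in \Z^{n-1}\times\{0\}$. Since $E_{\eps}$ is constructed cell-by-cell, it commutes with translations by $\eps l$, so the same estimate transfers to $E_{\eps}\cea$ on $\Oea$ and, after the $x_n$-rescaling, becomes $\|\hat{c}_{\eps,\alpha}(\cdot + l\eps,\cdot) - \hat{c}_{\eps,\alpha}\|_{L^2((0,T)\times\Omega)}^2 \le C(\eps^{\alpha} + \kappa(|l\eps|)^2)$, which is a uniform modulus of continuity under $\ox$-translations. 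Equicontinuity in time is then derived from the bound $\eps^{-\alpha}\|\partial_t\cea\|_{L^2((0,T),\spaceH_{\Dea}')} \le C$ of Proposition \ref{prop:apriori_time_derivative}, via an Aubin-Lions-type duality interpolation against the already-established spatial regularity.

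With these three types of equicontinuity together with the $L^2$-bound, Simon's compactness theorem extracts, along a subsequence, strong convergence $\hat{c}_{\eps,\alpha} \to \hat{c}$ in $L^2((0,T)\times\Omega)$. The weak two-scale convergence from Proposition \ref{prop:weak_two_scale_compactness_micro_sol_conc}, together with uniqueness of limits, forces $\hat{c} = c_0$. The strong two-scale convergence $\cea \stoa c_0$ is then the norm identity $\lim_{\eps\to 0}\eps^{-\alpha}\|\cea\|_{L^2(\Oea)}^2 = \lim_{\eps\to 0}\|\hat{c}_{\eps,\alpha}\|_{L^2((0,T)\times\Omega)}^2 = \|c_0\|_{L^2((0,T)\times\Omega)}^2$, which coincides with $\|c_0\|_{L^2((0,T)\times\Omega\times Y)}^2$ since $c_0$ is independent of $y$ and $|Y|=1$.

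The main obstacle is the loss of horizontal gradient control after the extension-plus-rescaling procedure, which rules out a direct Aubin-Lions argument based on an $H^1$-type bound. This is precisely why the periodic boundary condition in case \ref{case:diffusion_high} is indispensable: only under that assumption does Proposition \ref{prop:estimate_shifts} hold for arbitrary $l$ and $\eps$ without the cut-off parameter $h$ and without a lossy $\sqrt{h}$-term, as emphasized in Remark \ref{rem:estimate_shifts}, thereby supplying the horizontal equicontinuity that no gradient estimate can provide here.
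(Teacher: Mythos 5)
Your overall strategy—rescale to the fixed domain, establish equicontinuity in time and space, invoke Simon's compactness theorem, and then upgrade to strong two-scale convergence via the norm identity—is exactly the paper's approach, and the time-shift argument you sketch (duality pairing between $\partial_t\cea \in L^2((0,T),\spaceH_{\Dea}')$ and $\cea \in L^2((0,T),\spaceH_{\Dea})$, giving $\|\tcea(\cdot_t+h)-\tcea\|^2 \le C\sqrt{h}$) is the correct mechanism. The role you assign to the periodic boundary condition is also right and matches Remark~\ref{rem:estimate_shifts}.

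There is, however, a genuine gap in the horizontal equicontinuity step. Proposition~\ref{prop:estimate_shifts} only controls shifts by \emph{lattice} vectors $l\eps$ with $l\in\Z^{n-1}\times\{0\}$. The Kolmogorov--Simon criterion requires a modulus of continuity for \emph{arbitrary} small shifts $\xi\in\R^{n-1}$, uniformly in $\eps$. For a fixed $\xi$ with $|\xi|<h$, one must round $\bar\xi$ to the nearest lattice point $\bar\xi_\eps := \eps\left[\bar\xi/\eps\right]$ and separately control the remainder $|\bar\xi - \bar\xi_\eps|\le C\eps$. That ``within-cell'' shift is exactly what the paper's term $I_\eps^2$ handles: by the mean-value theorem, $I_\eps^2 \le C|\bar\xi-\bar\xi_\eps|\,\|\nabla_{\ox}\tcea\|_{L^2((0,T)\times\Omega)} \le C\eps\cdot\eps^{-\alpha/2}\|\nabla\cea\|_{L^2((0,T)\times\Oeaf)} \le C\eps^{1-\alpha}$, which vanishes precisely because $\alpha<1$. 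So your assertion that ``horizontal equicontinuity cannot come from a gradient bound'' is misleading: the lattice part indeed comes from Proposition~\ref{prop:estimate_shifts}, but the within-cell part \emph{does} rely on the horizontal gradient bound — it is usable despite its bad scaling $O(\eps^{-\alpha})$ because it is paired with a shift of size $O(\eps)$. Without this $I_\eps^2$ term (plus the $x_n$-shift term $I_\eps^1$, which you do implicitly cover via the $\partial_{x_n}\hat c$ bound), the hypotheses of Simon's theorem are not verified and the compactness conclusion does not follow.
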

\begin{proof}
We define for almost every $(t,x)\in (0,T)\times \Omega$ the rescaled function $\tcea(t,x):= \cea \left(t,\ox,\frac{x_n}{\eps^{\alpha}}\right)$.
With the properties of the extension operator from Lemma \ref{lem:extension_operator} (again, to illustrate the use of this lemma, we explicitly write $E_{\eps} \cea$ for the extended function), we obtain for $0 < h \ll 1$ that  (see \cite[Proposition 5]{gahn2025effective} for similar arguments and more details)
\begin{align*}
\| \tcea (\cdot_t + h,\cdot_x)  - \tcea \|^2_{L^2((0,T-h),L^2(\Omega))} &= \frac{1}{\eps^{\alpha}} \| E_{\eps} \cea (\cdot_t + h , \cdot_x) - E_{\eps} \cea\|^2_{L^2((0,T-h),L^2(\Oea))}
\\
&\le \frac{C}{\eps^{\alpha}} \| \cea (\cdot_t + h , \cdot_x) -  \cea\|^2_{L^2((0,T-h),L^2(\Oeaf))} 
\\
&\le \frac{C\sqrt{h}}{\eps^{\alpha}} \|\partial_t \cea \|_{L^2((0,T),\spaceH_{\Dea}')} \|\cea\|_{L^2((0,T),\spaceH_{\Dea})}
\\
&\le C\sqrt{h},
\end{align*}
where at the end we used the a priori estimates from Proposition \ref{prop:apriori_concentration_diff_nabla} and \ref{prop:apriori_time_derivative}.  Next, we control differences of the shifts in the spatial variable. We extend $\cea$ to a function in $(0,T)\times \R^n$ preserving, in particular, the $L^{\infty}$-estimates.   Hence, due to the essential bound from Lemma \ref{lem:apriori_Linfty}, it is enough to show the above convergence for $\Omega_h:=\left\{x\in \Omega \, : \, \mathrm{dist}(\partial \Omega,x)>h\right\}$ for $0<h \ll 1$. Let $\xi \in \R^n$ such that $|\xi|<h$. In the following, we use similar arguments as in the proof of \cite[Proposition 6]{gahn2025effective} and therefore we skip some details. Let $\bar{\xi}_{\eps} := \eps \left[\frac{\bar{\xi}}{\eps}\right]$. Now, we have
\begin{align*}
 \| \tcea(\cdot_t , \cdot_x + \xi) - \tcea \|_{L^2((0,T) \times \Omega_h)} \le&  \| \tcea(\cdot_t , \cdot_x + (\bar{\xi},0)) - \tcea (\cdot_t , \cdot_x + \xi)\|_{L^2((0,T) \times \Omega_h)}
 \\
 &+  \| \tcea(\cdot_t , \cdot_x + (\bar{\xi},0)) - \tcea (\cdot_t , \cdot_x + (\bar{\xi}_{\eps},0))\|_{L^2((0,T) \times \Omega_h)}
\\
&+  \| \tcea- \tcea (\cdot_t , \cdot_x + (\bar{\xi}_{\eps},0))\|_{L^2((0,T) \times \Omega_h)} =: I_{\eps}^1  + I_{\eps}^2 + I_{\eps}^3.
\end{align*}
For the first term we use the mean value theorem, to obtain
\begin{align*}
    I_{\eps}^1 &\le C |\xi_n| \|\partial_n \tcea\|_{L^2((0,T)\times \Omega)} \le C|\xi_n|.
\end{align*}
For the second term, we proceed in a similar way, to get
\begin{align*}
 I_{\eps}^2 \le C|\bar{\xi} - \bar{\xi}_{\eps}| \| \nabla_{\ox} \tcea\|_{L^2((0,T)\times \Omega)} \le C \eps^{1-\frac{\alpha}{2}} \|\nabla \cea \|_{L^2((0,T)\times \Oeaf)} \le C\eps^{1- \alpha}.
\end{align*}
For the last term $I_{\eps}^3$, we get with Proposition \ref{prop:estimate_shifts} 
\begin{align*}
I_{\eps}^3 \le \frac{C}{\eps^{\frac{\alpha}{2}}} \|\cea (\cdot_t ,  \cdot_x + (\bar{\xi}_{\eps},0)) - \cea \|_{L^2((0,T)\times \Oeaf)} \le C \sqrt{h} + C_h \eps^{\frac{\alpha}{2}} + \kappa(|\bar{\xi}_{\eps}|).
\end{align*}
Now, we can apply \cite[Theorem 1]{Simon} to obtain the strong convergence of $\tcea$ to some limit function $\tilde{c}_0$ in $L^2((0,T)\times \Omega)$. It is easy to check that $\tilde{c}_0 = c_0$. Hence, we get (since $c_0$ is independent of $y$) 
\begin{align*}
\|c_0\|_{L^2(\Omega \times Y)} = \|c_0\|_{L^2(\Omega)} = \lim_{\eps \to 0}\|\tcea\|_{L^2(\Omega)} = \eps^{-\frac{\alpha}{2}} \|\cea \|_{L^2(\Oea)},
\end{align*}
and therefore  the strong two-scale convergence of $\cea$.
\end{proof}
Strong two-scale compactness results in thin layers were also obtained, for example, in \cite{GahnEffectiveTransmissionContinuous,GahnNeussRaduKnabner2018a} (see also \cite{Gahn} for similar ideas in the case of perforated domains). Compared to our situation, a crucial difference lies in the different scaling of the diffusion in different (horizontal and vertical) directions. In the aforementioned contributions, in the case of fast diffusion, an additional bound for the differences of shifts was not necessary. Let us explain, why we cannot avoid this bound in our situation, as long as we consider arbitrary domains. The rescaled (extended) function $\tcea$ fulfills
\begin{align*}
\|\tcea\|_{L^2((0,T),H^1(\Omega))}^2 =& \frac{1}{\eps^{\alpha}} \|E_{\eps}\cea \|_{L^2((0,T)\times \Oea)}^2 + \frac{1}{\eps^{\alpha}} \|\nabla_{\ox} E_{\eps}\cea \|_{L^2((0,T)\times \Oea)}^2\\
&+ \eps^{\alpha}\|\partial_n E_{\eps}\cea \|_{L^2((0,T)\times \Oea)}^2.
\end{align*}
On the right-hand side, we have the norms in the full layer $\Oea$ and have to consider the extended function $E_{\eps}\cea$. Now, as already mentioned at the beginning of this section, we can only control the horizontal gradient $\nabla_{\ox} E_{\eps}\cea $ by the full gradient $\nabla \cea$, and only get
\begin{align*}
    \|\tcea \|_{L^2((0,T)\times H^1(\Omega))} \le C\eps^{-\alpha}.
\end{align*}
This is a consequence of the fact, that the estimate for the gradient for the extension operator is depending on the full norm. We  can only avoid this problem in the case of a specific geometry, for example by considering cylindrical inclusions as in Section \ref{sec:cylindral_inclusions}. In this case, we have
\begin{align*}
  \|\nabla_{\ox} E_{\eps} \cea \|_{L^2((0,T)\times \Oea)} \le C \|\nabla_{\ox} \cea \|_{L^2((0,T)\times \Oeaf)} \le C\eps^{\frac{\alpha}{2}},  
\end{align*}
which implies that $\tcea$ is bounded in $L^2((0,T), H^1(\Omega))$. Now, with the estimate for the differences of the shifts with respect to time in the proof of Proposition \ref{prop:strong_ts_different_scaling}, we can directly apply \cite[Theorem 1]{Simon}, without additional estimate for the differences of the shifts in the spatial variable. However, in conclusion, we see that for an arbitrary shape of the perforations and different orders of diffusion in different directions, the usual a priori bounds in $H^1$ and for the time-derivative, as obtained in Proposition \ref{prop:apriori_concentration_diff_nabla} and \ref{prop:apriori_time_derivative}, are not enough to guarantee strong two-scale convergence. We emphasize, that this problem also occurs in the case of perforated domains which are not thin.

\subsection{Derivation of the macroscopic model}
\label{sec:derivation_macro_model_transport}

Based on the compactness results obtained in the previous section, we are now able for the derivation of the macroscopic model with its effective coefficients. Here, we proceed in the usual way for homogenization problems, and first derive the cell problems for the corrector functions $c_1$, and then derive the macroscopic equation. This has to be adapted to our situation including additional the dimension reduction, which is included in our definition of the two-scale convergence. We first deal with the case \ref{case:diffusion_high} of high diffusion in the horizontal direction. The other case follows by similar arguments. 

\subsubsection{The case $\Dea = D\mathrm{diag}(\eps^{-\alpha},\ldots,\eps^{-\alpha},\eps^{\alpha})$}

First of all, we choose in $\eqref{eq:weak_transport_vea}$ test-functions of the form $\psiea (t,x) :=  \psi\left(t,\ox,\frac{x_n}{\eps^{\alpha}} , \frac{x}{\eps}\right) $ with $\psi \in C_0^{\infty}([0,T), C_{\#}^{\infty}( \overline{\Omega}, C_{\per}^{\infty}(Y)))$ and compact support in $(-1,1)$ with respect to the $x_n$-variable (third component), and get after integration with respect to time and integration by parts in time
\begin{align}
\begin{aligned}\label{eq:aux0_derivation_macro_model}
 - \frac{1}{\eps} &\int_0^T \int_{\Oeaf} \cea  \partial_t\psi\left( t,\ox,\frac{x_n}{\eps^{\alpha}},\frac{x}{\eps}\right) \dd x \dd t 
 \\
 -&  \int_0^T \int_{\Oeaf} \frac{\uea}{\eps^{2}} \cea \cdot \left[ \nabla_{\ox} \psi + \eps^{-\alpha} \partial_{x_n} \psi e_n + \eps^{-1} \nabla_y \psi \right] \left( t,\ox,\frac{x_n}{\eps^{\alpha}},\frac{x}{\eps}\right)\dd x\dd t 
\\
+&  \int_0^T \int_{\Oeaf} \left[\eps^{-\alpha} D\nabla_{\ox} \cea   + \eps^{\alpha} D \partial_n \cea e_n\right]\cdot \left[ \nabla_{\ox} \psi + \eps^{-\alpha} \partial_{x_n} \psi e_n  + \eps^{-1} \nabla_{\bar{y}}\psi\right]\left( t,\ox,\frac{x_n}{\eps^{\alpha}},\frac{x}{\eps}\right) \dd x\dd t
\\
&= \eps^{-\alpha} \int_0^T \int_{\Oeaf} g_{\eps,\alpha }  \psi\left( t,\ox,\frac{x_n}{\eps^{\alpha}},\frac{x}{\eps}\right) \dd x\dd t 
\end{aligned}
\end{align}
First, we derive the cell problem for the limit function $\bar{c}_1$. For this, we multiply the above equation by $\eps$ and 
use the compactness results from Proposition \ref{prop:apriori_concentration_diff_nabla}, to obtain (all terms except the diffusive term including $\nabla_{\oy} \psi$ vanish for $\eps \to 0$)
\begin{align*}
\int_0^T \int_{\Omega} \int_{Y_f} D(\nabla_{\ox} c_0 + \nabla_{\oy} \bar{c}_1) \cdot \nabla_y \psi\dd y  \dd x\dd t = 0.
\end{align*}
By density this is valid for all $\psi \in L^2((0,T)\times \Omega, H_{\per,\nabla_{\oy}}^1(Y_f))$. In other words, $\bar{c}_1$ is the unique (up to a constant) weak solution  of the cell problem
\begin{align}
\begin{aligned}\label{eq:cell_problem_c1_high_diffusion}
    -\nabla_{\oy} \cdot (D(\nabla_{\ox} c_0 + \nabla_{\oy} \bar{c}_1) ) &= 0 &\mbox{ in }& (0,T)\times \Omega \times Y_f,
    \\
    -D(\nabla_{\ox} c_0 + \nabla_{\oy} \bar{c}_1) ) \cdot \nu &= 0 &\mbox{ on }& (0,T)\times \Omega \times \Gamma,
    \\
    \bar{c}_1 \mbox{ is } Y\mbox{-periodic}.
\end{aligned}
\end{align}
From this, we obtain the decomposition 
\begin{align}\label{eq:decomposition_c1_full_gradient}
    \bar{c}_1(t,x,y) = \sum_{i=1}^{n-1} \partial_{x_i} c_0 (t,x)\bchi_i(y)
\end{align}
for almost every $(t,x,y) \in (0,T)\times \Omega \times Y_f$, where $\bchi_i \in H_{\per,\nabla_{\oy}}^1(Y_f)$ is the unique (up to $L^2$-functions only depending on $y_n$) weak solution of the cell problem (for $i=1,\ldots,n-1)$
\begin{align}
\begin{aligned}\label{eq:cell_problem_diffusion}
- \nabla_y \cdot (D (e_i + \nabla_{\oy} \bchi_i)) &= 0 &\mbox{ in }& Y_f,
\\
- D(e_i + \nabla_{\oy}\bchi_i)\cdot \nu &= 0 &\mbox{ on }& \Gamma,
\\
\bchi_i \mbox{ is } Y\mbox{-periodic}.
\end{aligned}
\end{align}
Next, we derive a cell problem for $c_1$. For this, we choose in $\eqref{eq:aux0_derivation_macro_model}$ test-functions independent of $\bar{y}$, i.e., $\phi(t,x,y) = \phi (t,x,y_n)$ and multiply the equation with $\eps^{1-\alpha}$. Now, the term including $\nabla_{\oy} \phi$ vanishes, and we get
\begin{align*}
    \int_{\Omega} \int_{Y_f} (\partial_n c_0 + \partial_{y_n} c_1)\partial_{y_n} \phi (y_n)\dd y  =0,
\end{align*}
and by density this equation is valid for all $\phi \in L^2((0,T)\times \Omega,H_{\per}^1(0,1))$. We define
\begin{align*}
    A(y_n):= \mathcal{H}^{n-1}\left( \left\{ \oy \in Y\, : \, (\oy,y_n) \in Y_f \right\} \right).
\end{align*}
Since $Y_f$ and $\Oeaf$ are Lipschitz and connected, we have $A \in L^{\infty}(0,1)$ and $A\geq a_0>0$. Identifying $c_1$ with a function in $L^2((0,T)\times \Omega , H_{\per}^1(0,1))$, we obtain that $c_1$ is a weak solution of the problem
\begin{align}
\begin{aligned}\label{eq:cell_problem_c_1}
    -\partial_{y_n} (A(\partial_n c_0 + \partial_{y_n}c_1)) &= 0 &\mbox{ in }& (0,T)\times \Omega \times (0,1),
    \\
    c_1 \mbox{ is } 1\mbox{-periodic.}
\end{aligned}
\end{align}
Since this problem has a unique weak solution, we get
\begin{align}\label{eq:decomposition_c1_chi_n}
    c_1(t,x,y_n) = \partial_n c_0(t,x) \chi_n(y),
\end{align}
where $\chi_n \in H_{\per}^1(0,1)/\R$ is the unique weak solution of the cell problem 
\begin{align}
\begin{aligned}\label{eq:cell_problem_chi_n}
- \partial_{y_n} (A (1+ \partial_{y_n} \chi_n )) &= 0 &\mbox{ in }& (0,1),
\\
\chi_n \mbox{ is } 1\mbox{-periodic}.
\end{aligned}
\end{align}
Now, we are able to derive the macroscopic model. In $\eqref{eq:aux0_derivation_macro_model}$ we choose test-functions of the form $\psi(t,x):= \psi\left(t,\ox,\frac{x_n}{\eps^{\alpha}}\right)$ (independent of the microscopic variable $y$), and obtain
\begin{align}
\begin{aligned}\label{eq:aux_derivation_macro_model}
-\eps^{ - \alpha} &\int_0^T \int_{\Oeaf} \cea \partial_t\psi\left( t,\ox,\frac{x_n}{\eps^{\alpha}}\right) \dd x \dd t -  \int_0^T \int_{\Oeaf} \frac{\uea}{\eps^{2}} \cea \cdot \left[ \nabla_{\ox} \psi + \eps^{-\alpha} \partial_{x_n} \psi e_n \right] \left( t,\ox,\frac{x_n}{\eps^{\alpha}}\right)\dd x\dd t
\\
+& \int_0^T \int_{\Oeaf} \left[\eps^{-\alpha} D\nabla_{\ox} \cea   + \eps^{\alpha} D \partial_n \cea e_n\right]\cdot \left[ \nabla_{\ox} \psi + \eps^{-\alpha} \partial_{x_n} \psi e_n \right]\left( t,\ox,\frac{x_n}{\eps^{\alpha}}\right) \dd x\dd t
\\
&= \eps^{-\alpha} \int_0^T \int_{\Oeaf} g_{\eps,\alpha }  \psi\left( t,\ox,\frac{x_n}{\eps^{\alpha}}\right) \dd x\dd t 
\end{aligned}
\end{align}
Using the compactness results from Proposition  \ref{prop:compactness_micro_solution_fluid},  \ref{prop:weak_two_scale_compactness_micro_sol_conc} and \ref{prop:strong_ts_different_scaling} (in particular, we need the strong two-scale convergence of $\cea$ to pass to the limit in the convective term, see also Remark \ref{rem:two_scale_convergence}) , we get 
\begin{align}
\begin{aligned}\label{def:effective_diffusion_coefficient}
-\int_0^T&\int_{\Omega} \int_{Y_f} c_0 \partial_t\psi\dd y  \dd x\dd t  - \int_0^T \int_{\Omega} \int_{Y_f} u_0 c_0 \cdot \partial_{x_n} \psi e_n\dd y  \dd x\dd t
\\
&+ \int_0^T \int_{\Omega} \int_{Y_f} D(\nabla c_0 + (\nabla_{\oy} \bar{c}_1,\partial_{y_n} c_1)) \cdot \nabla \psi\dd y  \dd x\dd t  =  \int_0^T \int_{\Omega} \int_{Y_f} g_0 \psi\dd y  \dd x\dd t.
\end{aligned}
\end{align}
With the decompositions of $\bar{c}_1$ and $c_1$ from $\eqref{eq:decomposition_c1_full_gradient}$ resp. $\eqref{eq:decomposition_c1_chi_n}$,  we get 
\begin{align*}
     \int_{Y_f} D(\nabla c_0 + (\nabla_{\oy} \bar{c}_1,\partial_{y_n} c_1)) \cdot \nabla \psi\dd y = D^{\ast}\nabla c_0 \cdot \nabla \psi 
\end{align*}
almost everywhere in $(0,T)\times \Omega$ with the homogenized diffusion coefficient $D^{\ast} \in \R^{n\times n}$ given by 
\begin{align*}
D_{ij} := \begin{cases}
    \int_{Y_f} D(e_i + \nabla_{\oy} \bar{\chi}_i) \cdot (e_j + \nabla_{\oy} \bar{\chi}_j)\dd y  &\mbox{ for } i,j=1,\ldots,n-1,
    \\
    0 &\mbox{ for } i=n \mbox{ or } j = n,
    \\
    \int_{Y_f} D(1 + \partial_{y_n} \chi_n) (1+\partial_{y_n} \chi_n)\dd y  &\mbox{ for } i=j=n.
\end{cases}
\end{align*}
\begin{remark}
    This formula is also valid in the case $D = \begin{pmatrix}
        \tilde{D} & 0 \\ 0 & D_{nn} \end{pmatrix}$ with $\tilde{D}\in \R^{(n-1)\times (n-1)}$ positive and $D_{nn}>0$.
\end{remark}
Altogether, we end up with 
\begin{align*}
 - |Y_f| \int_0^T \int_{\Omega} c_0 \partial_t \psi& \dd x\dd t - \int_0^T \int_{\Omega} c_0 \ou e_n \cdot \nabla\psi \dd x\dd t 
 \\
 &+ \int_0^T \int_{\Omega} D^{\ast} \nabla c_0 \cdot \nabla \psi \dd x\dd t = \int_0^T \int_{\Omega} \bar{g}_0 \psi \dd x\dd t 
\end{align*}
with $\bar{g}_0:= \int_{Y_f} g_0\dd y $. By density, this is valid for all $\psi \in L^2((0,T),H^1_{\#}(\Omega))$ with $\psi = 0 $ on $S_1^{\pm}$ and $\partial_t \psi \in L^2((0,T)\times \Omega)$. In particular, this implies $\partial_t c_0 \in L^2((0,T),H^1_{\#}(\Omega,S_1^+\cup S_1^-)')$ and we have almost everywhere in $(0,T)$ 
\begin{align*}
    \langle \partial_t c_0 , \psi \rangle_{H^1(\Omega,S_1^+ \cup S_1^-)} - \int_{\Omega} \ou c_0 e_n \cdot \nabla \psi \dd x +\int_{\Omega} D^{\ast} \nabla c_0 \cdot \nabla \psi \dd x = \int_{\Omega} \bar{g}_0 \psi \dd x
\end{align*}
for all $\psi \in H^1_{\#}(\Omega,S_1^+ \cup S_1^-)$ and $c_0(0) = 0$.  In other words, $c_0$ is a weak solution of the macroscopic problem $\eqref{eq:macro_model_transport_high_diffusion}$. Obviously, a weak solution of this problem is unique and, in particular, we get the convergence of the whole sequence. This finishes the proof of Theorem \ref{thm:main_result_transport_high_diffusion}

\subsubsection{The case $\Dea = \eps^{\alpha}D$}

In this case, we proceed in a similar way as before. The only difference occurs in the diffusive term, where this term in $\eqref{eq:aux0_derivation_macro_model}$ has to be replaced by 
\begin{align*}
\int_0^T \int_{\Oeaf} &\eps^{\alpha}D \nabla \cea \cdot \left[ \nabla_{\ox} \psi + \eps^{-\alpha} \partial_{x_n} \psi e_n  + \frac{1}{\eps} \nabla_y \psi\right]\left( t,\ox,\frac{x_n}{\eps^{\alpha}}, \frac{x}{\eps}\right) \dd x\dd t 
\end{align*}
Multiplication with $\eps^{1-\alpha}$ and $\eps\to 0$ gives with the same arguments as above:
\begin{align*}
    \int_0^T \int_{\Omega} \int_{Y_f} D \left( \partial_n c_0 e_n + \nabla_y c_1\right)\cdot \nabla_y  \psi\dd y  \dd x\dd t = 0,
\end{align*}
and, by density, this is valid for all $\phi \in L^2((0,T)\times \Omega,H_{\per}^1(Y_f))$.
In other words, $c_1$ is the unique (up to constant) weak solution of  the cell problem 
\begin{align*}
    -\nabla_y \cdot (D(\partial_n c_0 e_n + \nabla_y c_1) ) &= 0 &\mbox{ in }& (0,T)\times \Omega \times Y_f,
    \\
    -D(\partial_n c_0 e_n + \nabla_y c_1) \cdot \nu &= 0 &\mbox{ on }& (0,T)\times \Omega \times \Gamma,
    \\
    c_1 \mbox{ is } Y\mbox{-periodic}.
\end{align*}
Hence, we obtain for almost every $(t,x,y) \in (0,T)\times \Omega \times Y_f$
\begin{align*}
    c_1(t,x,y) = \partial_n c_0(t,x) \chi_n(y),
\end{align*}
where $\chi_n$ is the cell solution of $\eqref{eq:cell_problem_diffusion}$ for $i=n$. Now, choosing again in $\eqref{eq:weak_transport_vea}$  test-functions of the form $\psi(t,x):= \psi\left(t,\ox,\frac{x_n}{\eps^{\alpha}}\right)$ with $\psi \in C_0^{\infty}([0,T)\times (\Omega \cup \partial_D \Omega))$, we obtain $\eqref{eq:aux_derivation_macro_model}$ with the diffusive term replaced by 
\begin{align*}
    \int_0^T \int_{\Oeaf} &\eps^{\alpha}D \nabla \cea \cdot \left[ \nabla_{\ox} \psi + \eps^{-\alpha} \partial_{x_n} \psi e_n \right]\left( t,\ox,\frac{x_n}{\eps^{\alpha}}\right) \dd x\dd t 
    \\
    \overset{\eps\to 0}{\longrightarrow}& \int_0^T \int_{\Omega} \int_{Y_f} D(\partial_n c_0 e_n + \nabla_y c_1) \cdot e_n \partial_{x_n} \psi\dd y  \dd x\dd t
    \\
    =& \int_0^T \int_{\Omega} D_{nn}^{\ast}  \partial_n c_0 \partial_n \psi \dd x\dd t .
\end{align*}
Arguing as in the previous case, we obtain $\partial_t c_0 \in L^2((0,T)\times \Sigma, H_0^1(-1,1))$, and almost everywhere in $(0,T)$ we have
\begin{align*}
    \langle \partial_t c_0 , \psi \rangle_{L^2(\Sigma,H_0^1(-1,1))} - \int_{\Omega} \ou c_0 \partial_n \psi \dd x +\int_{\Omega} D^{\ast}_{nn} \partial_n c_0 \cdot \partial_n \psi \dd x = \int_{\Omega} \bar{g}_0 \psi \dd x 
\end{align*}
for all $\psi \in L^2(\Sigma,H_0^1(-1,1))$. In other words, $c_0$ is a weak solution of the problem $\eqref{eq:macro_model_transport_low_diffusion}$. It is easy to check that a weak solution of this problem is unique. In particular, all the convergence results are valid for the whole sequence. This finishes the proof of Theorem \ref{thm:main_result_transport_low_diffusion}.

\printbibliography
\end{document}